\theoremstyle{plain}
\newtheorem{theorem}{Theorem}[section]
\newtheorem{proposition}[theorem]{Proposition}
\newtheorem{corollary}[theorem]{Corollary}
\newtheorem{lemma}[theorem]{Lemma}
\theoremstyle{definition}
\newtheorem{definition}[theorem]{Definition}
\newtheorem{note}[theorem]{Note}
\theoremstyle{remark}
\newtheorem{notation}[theorem]{\bf Notation}
\newtheorem{remark}[theorem]{\bf Remark}
\newcommand{\bbbn}{\mathbb{N}}
\newcommand{\bbbx}{\mathbb{X}}
\newcommand{\bbby}{\mathbb{Y}}\newcommand{\bbbz}{\mathbb{Z}}
\newcommand{\ttp}{{\tt p}}
\newcommand{\ttq}{{\tt q}}
\newcommand{\ttu}{{\tt u}}
\newcommand{\ttx}{{\tt x}}
\newcommand{\tty}{{\tt y}}\newcommand{\ttz}{{\tt z}}
\newcommand{\leqct}{\leq_{\rm ct}}
\newcommand{\geqct}{\geq_{\rm ct}}
\newcommand{\eqct}{=_{\rm ct}}
\newcommand{\infct}{<_{\rm ct}}
\newcommand{\supct}{>_{\rm ct}}
\newcommand{\couple}[2]{\mbox{$\langle #1,#2 \rangle$}}
\newcommand{\words}{\{0,1\}^*}
\newcommand{\WN}{{\words\to\bbbn}}
\newcommand{\XN}{{\bbbx\to\bbbn}}
\newcommand{\XY}{{\bbbx\to\bbby}}
\newcommand{\NXN}{{\bbbn\times\bbbx\to\bbbn}}
\newcommand{\NWN}{{\bbbn\times\words\to\bbbn}}
\newcommand{\sizesub}{\footnotesize}
\newcommand{\submax}{{\mbox{{\rm\sizesub max}}}}
\newcommand{\submin}{{\mbox{{\rm\sizesub min}}}}
\newcommand{\kminbiblio}{{K_\submin}}
\newcommand{\kmaxbiblio}{{K_\submax}}
\newcommand{\ki}{{K^\infty}}
\newcommand{\hi}{{H^\infty}}
\newcommand{\sipi}{(\Sigma^0_1\wedge\Pi^0_1)}
\newcommand{\truc}{\exists^{\leq\phi_i}\sipi}
\newcommand{\sipiA}{(\Sigma^{0,A}_1\wedge\Pi^{0,A}_1)}
\newcommand{\trucA}{\exists^{\leq\phi^A_i}\sipiA}
\begin{document}
%
\title{Refinment of the ``up to a constant" ordering
\\ using contructive co-immunity and alike.
\\ Application to the $Min/Max$ hierarchy
\\ of Kolmogorov complexities}
%
\author{
{\small\sc Marie Ferbus-Zanda}\\
{\footnotesize LIAFA, Universit\'e Paris 7}\\
{\footnotesize 2, pl. Jussieu 75251 Paris Cedex 05}\\
{\footnotesize France}\\
{\footnotesize\tt ferbus@logique.jussieu.fr}
\and {\small\sc Serge Grigorieff}\\
{\footnotesize LIAFA, Universit\'e Paris 7}\\
{\footnotesize 2, pl. Jussieu 75251 Paris Cedex 05}\\
{\footnotesize France}\\
{\footnotesize\tt seg@liafa.jussieu.fr}
}
\date{\today}
\maketitle
{{\footnotesize \textnormal \tableofcontents}
%
\begin{abstract}\noindent
We introduce orderings $\lless[{\cal C},{\cal D}] {\cal F}$
between total functions $f,g:\bbbn\to\bbbn$
which refine the pointwise ``up to a constant" ordering $\leqct$
and also insure that $f(x)$ is often much less than $g(x)$.
With such $\lless[{\cal C},{\cal D}] {\cal F}$'s, we prove a strong
hierarchy theorem for Kolmogorov complexities obtained with jump
oracles and/or $Max$ or $Min$ of partial recursive functions.
\\
We introduce a notion of {\em second order} conditional Kolmogorov
complexity which yields a uniform bound for the
``up to a constant" comparisons involved in the hierarchy theorem.
\end{abstract}
%
%
%
\section{Introduction}
\label{s:intro}
%
%
\subsection{Comparing total functions $\bbbn\to\bbbn$}
\label{ss:comp}
%
\begin{notation}\label{not:start0}
Equality, inequality and strict inequality up to a constant
between total functions $I\to\bbbn$, where $I$ is any set,
are denoted as follows:
\begin{eqnarray*}
f\ \leqct\ g &\Leftrightarrow&
\exists c\in\bbbn\ \forall x\in I\ f(x)\leq g(x)+c
\\
f\ \eqct\ g &\Leftrightarrow &
f\leqct g\ \wedge\ g\leqct f\\
&\Leftrightarrow &
\exists c\in\bbbn\ \forall x\in I\ |f(x)-g(x)|\leq c
\\
f\ \infct\ g &\Leftrightarrow &
f \leqct g\ \wedge\ \neg(g \leqct f)\\
&\Leftrightarrow &
f \leqct g\ \wedge\ \forall c\in\bbbn\ \exists x\in I\ g(x)>f(x)+c
\end{eqnarray*}
\end{notation}
Total functions $f,g:\bbbn\to\bbbn$ can be compared in diverse
ways. The simplest one is pointwise comparison via the partial
ordering relation $\forall x\ f(x)<g(x)$.
In case functions are considered up to an additive constant,
for instance with Kolmogorov complexity, pointwise comparison
has to be replaced by the $\leqct$ preordering or the $\infct$
ordering.
\\
Observe that the $\infct$ ordering is an infinite intersection:
$$f\infct g\ \Leftrightarrow\ f\leqct g\ \wedge\
\forall c\in\bbbn\ f<_{io}g-c$$
where $<_{io}$ (io stands for ``infinitely often") is
the non transitive relation
\\\centerline{$f<_{io}g\ \Leftrightarrow\
\{x:f(x)<g(x)\}\mbox{ is infinite}$}
Relation $<_{io}$ can be much refined via localization: instead of
merely demanding $\{x:f(x)<g(x)\}$ to be infinite, one can ask it
to have infinite intersection with every infinite set in a family
${\cal C}$ of sets.
\\
In case ${\cal C}$ is the family of all subsets of $\bbbn$,
this gives the relation
\\\centerline{$\{x:f(x)<g(x)\}\mbox{ is cofinite}$}
which is a partial ordering relation.
\\
In case ${\cal C}$ is the family of r.e. sets, this is
related to the idea of coimmunity.
\\
An instance of such a relation appears in a classical result about
Kolmogorov complexity $K$, due to Barzdins
(cf. \cite{livitanyi} Thm.2.7.1 iii, p.167, or
Zvonkin \& Levin, \cite{zvonkinlevin} p.92.),
which states that, for any total recursive function $\phi$ which tends
to $+\infty$, the set $\{x : K(x)<\phi(x)\}$ meets every infinite r.e.
set.
\medskip\\
In practice, for simple classes ${\cal C}$, an infinite subset
of $X\cap\{x:f(x)<g(x)\}$, for $X$ infinite in ${\cal C}$, can always
be found in a not too complex class ${\cal D}$.
Which leads to consider the relation $\often[{\cal C},{\cal D}]\,$
such that
\begin{eqnarray*}
f\often[{\cal C},{\cal D}]\,g&\Leftrightarrow&
\forall X\in{\cal C}\ \exists Y\in{\cal D}\
(X\mbox{ is infinite }\Rightarrow\\
&&\hspace{1cm}Y\mbox{ is infinite }
\wedge\ Y\subseteq\{x:f(x)<g(x)\})
\end{eqnarray*}
If ${\cal C}={\cal D}$ then this relation is transitive, hence is
a strict partial ordering.
However, in case ${\cal C}\neq{\cal D}$, transitivity may fail
(for instance, a counterexample is obtained via Lemma
\ref{l:ggKminKmax}).
\medskip\\
The key observation for the paper is as follows:
\medskip\\
{\em For any ${\cal C},{\cal D}$, the relation
$\ f\leqct g\ \wedge\ \forall c\ (f\often[{\cal C},{\cal D}]\,g-c)\ $
is transitive, hence is a partial strict ordering refining $\infct$.
In other words, considering $\often[{\cal C},{\cal D}]\,$ up to any
constant and mixing it with $\leqct$ always leads to an ordering.}
\medskip\\
If ${\cal F}$ is a family of total functions $\bbbn\to\bbbn$ which
tend to $+\infty$ and ${\cal F}$ is closed by translations
(i.e. $\phi\in{\cal F}$ implies $\max(0,\phi-c)\in{\cal F}$),
then the above observation also applies to the relation
$f\leqct g\ \wedge\
\forall\phi\in{\cal F}\ f\often[{\cal C},{\cal D}]\,\phi\circ g$,
i.e. the relation
\medskip\\\indent$f\leqct g\ \wedge\
\forall\phi\in{\cal F}\
\forall X\in{\cal C}\ \exists Y\in{\cal D}$

\hfill{$(X\mbox{ is infinite }\Rightarrow\ Y\mbox{ is infinite }
\wedge\ Y\subseteq\{x:f(x)<g(x)\})$}
\\
which is also a partial strict ordering refining the ordering
$\infct$.
\medskip\\
Enriching this relation with the requirement that a code for an
infinite subset $Y$ of $X\cap\{x:f(x)<\phi(g(x))\}$ can be
effectively computed from codes for $\phi$ and $X$, we get
the relation $\often[{\cal C},{\cal D}]{\cal F}$ which is the main
concern of this paper.
\medskip\\
In \S\ref{s:functional} we review some needed elements of oracular
computability. This is done in terms of partial computable
functionals so as to get uniformity in the oracle.
\medskip\\
In \S\ref{s:constructive} we recall Xiang Li's notion of
constructive immunity and introduce the related notions of
$({\cal C},{\cal D})$-density and constructive density.
\medskip\\
In \S\ref{s:ll} we introduce the relation
$\often[{\cal C},{\cal D}]{\cal F}$ and its variant
$\often[{\cal C},{\cal D}]{{\cal F}\uparrow}$ (where only total
monotone increasing functions in ${\cal F}$ are considered)
and prove that their intersections
$\lless[{\cal C},{\cal D}]{\cal F}$ and
$\lless[{\cal C},{\cal D}]{{\cal F}\uparrow}$
with $\leqct$ are strict orderings refining the ordering
$\infct$.
%
\subsection{Second order Kolmogorov complexity}
\label{ss:functionalK}
%
In relation with the partial computable functional approach to
oracular computability (cf. \S\ref{s:functional}), we develop in
\S\ref{s:uniform} a
{\em functional version ${\cal K}(x\,||\,A)$ of Kolmogorov
complexity.}
This amounts to a simple, seemingly unnoticed, fact:
\\
{\em Oracular Kolmogorov complexity $K^A$ can be obtained by
instantiating to $A$ the second order parameter of a variant of
conditional Kolmogorov complexity in which the condition is a
set of integers rather than an integer. The oracle is thus viewed as
a second order conditional parameter.}
\\
The usual proof of the invariance theorem goes through.
This second-order conditional complexity allows for a
uniform choice of oracular Kolmogorov complexities
(this is detailed in \S\ref{s:unifVersusOracle}) since,
for any $A$,
$${\cal K}(\ttx\,||\,A)\eqct K^A(\tt x)$$
i.e.
$\forall A\ \exists c\ \forall \ttx\
|K^A(\ttx)-{\cal K}(\ttx\,||\,A)|\leq c$.
\medskip\\
A typical benefit of the functional version of $K$ is as follows.
Usual properties with $K$ involving equality or inequality
``up to a constant" go through oracles. Let $c_A$ be the
involved constant for the oracle $A$ version.
For a single equality or inequality involving $K^A$,
it may be possible to modify $K^A$ (by an additive constant)
so that $c_A=0$. But this is no more possible for several equalities
or inequalities since the needed modifications of $K^A$ may
-- a priori -- be incompatible.
\\
Thus, for a system of equalities or inequalities, there is no a
priori $A$-computable bound of the involved constant $c_A$ for the
oracle $A$ version.
However, in case (which is also usual) such properties also
go through the functional version, the constant bound involved
in the functional version is valid for any oracle.
In other words, {\em whereas the oracular version a priori allows
no $A$-computable bound of the constant, the functional
version does allow a constant bound.}
\\
This fact is applied in \S\ref{ss:hierarchy} to get sharper
results.
\medskip\\
In \S\ref{s:KminKmax} we recall the variants $\kmax[],\kmin[]$ of
Kolmogorov complexity introduced in our paper
\cite{ferbusgrigoKmaxKmin} and we extend them to functional
versions. The precise relation between such functional versions
and the oracular $\kmax[],\kmin[]$ is detailed in
\S\ref{s:unifVersusOracle}.
%
%
\subsection{A strong hierarchy theorem for Kolmogorov complexities}
%
In \S\ref{ss:barzdins} we prove of a version of Barzdins'
result cited in \S\ref{ss:comp} (cf. also \S\ref{ss:immune})
with as much effectivity as possible which
involves an ordering relation introduced in \S\ref{s:ll} and
can be stated as $K\lless[\Sigma^0_1,\Sigma^0_1]{PR}\log$.
Also, the functional versions of Kolmogorov complexity
and the functional approach to oracular computability allow
to get a functional version of this result, hence to get
effectivity relative to the oracle.
\medskip\\
We extend this result in \S\ref{ss:KKmaxalabarzdins},
\ref{ss:KKminalabarzdins}, \ref{ss:KminKmaxOften}
and prove that $K,\kmax[],\kmin[]$ can be compared via the above
$\often[]\,$ and $\lless[]\,$ relations, with more complex
classes ${\cal C},{\cal D}$, namely
${\cal C}=\Sigma^0_1\cup\Pi^0_1$
and ${\cal D}=\exists^{<\phi}(\Sigma^0_1\wedge\Pi^0_1)$
or the variants in which $\Pi^0_1$ is constrained with a
``recursively bounded growth" condition
(cf. Def.\ref{def:recbounded}).
Also, the class ${\cal F}$ can be extended to $\minpr[]$, i.e.
the class of infima of partial recursive sequences of functions.
\medskip\\
The above class ${\cal D}$ is a subclass of $\Delta^0_2$ which can
be obtained via bounded existential quantification over boolean
combinations of $\Sigma^0_1$ relations.
In \S\ref{ss:complexity}, we show that such syntactical complexities
naturally appear when comparing $K,\kmax[],\kmin[]$.
\medskip\\
Finally, in \S\ref{ss:hierarchy} we prove the main application of
the $\lless[{\cal C},{\cal D}]{\cal F}$ and
$\lless[{\cal C},{\cal D}]{{\cal F}\uparrow}$ orderings, which is a
strong hierarchy theorem for the Kolmogorov complexities
$K,\kmax[],\kmin[]$ and their oracular versions using the
successive jumps.
%
%
\section{Partial computable functionals
and oracular recursion theory}
\label{s:functional}
%
%
\subsection{Notations}
\label{ss:notations}
%
\begin{notation}\label{not:start}$\\ $
{\bf 1.}
{\em [Basic sets]\ }
$\bbbx,\bbby$ denote products of
non empty finite families of spaces of the form
$\bbbn$ or $\bbbz$ or $\Sigma^*$ where $\Sigma$ is some finite
alphabet.
\medskip\\
{\bf 2.}{\em [Partial recursive functions]\ }
Let $A\subseteq\bbbn$.
We denote $PR^{\bbbx\to\bbby}$ (resp. $PR^{\bbbx\to\bbby,A}$)
the family of partial recursive (resp. $A$-recursive) functions
between basic sets $\bbbx$ and $\bbby$.
\medskip\\
{\bf 3.}{\em [Bijections between basic spaces]\ }
For any basic spaces $\bbbx,\bbby$ and $\bbbz$ we fix some
particular total recursive bijection from $\bbbx\times\bbby$ to
$\bbbz$ and denote ${\couple \ttx \tty}_{\bbbx\times\bbby,\bbbz}$,
or simply $\couple \ttx \tty$, the image in $\bbbz$ of the pair
$(\ttx,\tty)$.
\end{notation}
%
\subsection{Some classical results from recursion theory}
\label{ss:classicRec}
%
We shall use the following classical results from
computability theory
(cf. Odifreddi's book \cite{odifreddi} p.372--374, 288--292,
or Shoenfield's book \cite{shoenfield2}).
\begin{proposition}\label{p:classicRec}$\\ $
{\bf 1.} {\em (Post's Theorem, 1948 \cite{post48})}
A set is $\Sigma^0_{n+1}$ (resp. $\Delta^0_{n+1}$)
if and only if
it is recursively enumerable (r.e.) (resp. recursive)
in oracle $\emptyset^{(n)}$.
\medskip\\
{\bf 2.} {\em (Post, 1944 \cite{post44})}
For any oracle $A$, every infinite $A$-r.e. set $X$ contains
an infinite set $Y$ which is recursive in $A$.
Moreover, one can recursively go from an r.e. code
for $X$ to r.e. codes for such a $Y$ and its complement.
\medskip\\
In particular, every infinite $\Sigma^0_{n+1}$ set $X$
contains an infinite $\Delta^0_{n+1}$ subset $Y$.
Also, one can recursively go from a $\Sigma^0_{n+1}$-code
for $X$ to $\Sigma^0_{n+1}$-codes for such a $Y$
and its complement.
\medskip\\
{\bf 3.}
Recall that an $A$-r.e. set $X\subset\bbbn$ is maximal if
it is coinfinite and for any $A$-r.e. set $Y\supseteq X$
either $\bbbn\setminus Y$ is finite or $Y\setminus X$ is finite.
\\ {\em (Friedberg, 1958 \cite{friedberg58})}
There exists maximal $A$-r.e. sets.
\end{proposition}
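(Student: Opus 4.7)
All three items are standard and, as the authors indicate, can be found in Odifreddi or Shoenfield. I would treat them in turn.

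For (1), the plan is induction on $n$. The base case $n=0$ reduces to the very definition of $\Sigma^0_1$ as the class of recursively enumerable sets. The inductive step rests on one fact: for any oracle $B$, the jump $B'$ is $\Sigma^{0,B}_1$-complete for many-one reductions, so a relation is $\Pi^{0,B}_1$ if and only if it is recursive in $B'$. Unfolding the outermost existential at each level then converts a $\Sigma^0_{n+1}$ definition into a $\Sigma^{0,\emptyset^{(n)}}_1$ one, and conversely. The $\Delta^0_{n+1}$ statement follows from the $\Sigma^0_{n+1}$ one because recursiveness in an oracle is the same as being simultaneously r.e. and co-r.e. in that oracle.

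For (2), the construction is uniform and effective. Given an $A$-partial recursive $f$ with $\mathrm{range}(f)=X$, recursively define $g(0)=f(k_0)$ and $g(n+1)=f(k_{n+1})$, where $k_i$ is the least index with $f(k_i)\downarrow$ and $f(k_i)>g(i-1)$. Since $X$ is infinite, $g$ is total, strictly increasing, and $A$-recursive; its range $Y\subseteq X$ is $A$-recursive, since $m\in Y$ iff $m$ appears among $g(0),\ldots,g(m)$. An $A$-r.e. index for $\bbbn\setminus Y$ is obtained the same way. The index of $g$ is obtained recursively from an $A$-r.e. index for $X$, which gives the claimed uniformity. The ``In particular'' clause then follows by setting $A=\emptyset^{(n)}$ and invoking (1).

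For (3), the plan is Friedberg's \emph{$e$-state} priority argument. For an oracle $A$, define the $e$-state of $m$ at stage $s$ as the $(e+1)$-bit word $\sigma^s_e(m)=\chi_{W^A_{0,s}}(m)\cdots\chi_{W^A_{e,s}}(m)$, compared lexicographically; note that $\sigma^s_e(m)$ is monotone non-decreasing in $s$. Enumerate $X$ in stages: at stage $s$, list the surviving elements $a^s_0<a^s_1<\cdots$ of the current complement, and for each $e<s$ enumerate $a^s_e$ into $X$ as soon as some later $a^s_f$ ($f>e$) exhibits a strictly larger $e$-state. The verification splits in two: first, by induction on $e$, each $a_e$ stabilizes, since once $a_0,\ldots,a_{e-1}$ are fixed the $e$-state of the $e$-th surviving element can only increase lexicographically and is bounded by $2^{e+1}-1$, so only finitely many replacements can occur; hence $\overline{X}$ is infinite. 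Second, the final $e$-states of all sufficiently late surviving elements agree, so for every $W^A_e$ the intersection $W^A_e\cap\overline{X}$ is either finite (if the stabilized $e$-th bit is $0$) or cofinite in $\overline{X}$ (if it is $1$), which is exactly the maximality requirement.

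The main obstacle is the verification in (3): the intertwined induction on $e$, relying on monotonicity and boundedness of $e$-states, is what makes the priority argument go through, and is the one ingredient that is not just unfolding of definitions. Items (1) and (2) are essentially bookkeeping over the definitions of the arithmetic hierarchy and of oracle computability.
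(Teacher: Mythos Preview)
The paper does not prove this proposition: it is stated as a compendium of classical results with pointers to Odifreddi \cite{odifreddi} and Shoenfield \cite{shoenfield2}, and no proof environment follows. Your sketches are the standard textbook arguments---induction on $n$ via jump-completeness for (1), the strictly increasing subsequence of an enumeration for (2), and Friedberg's $e$-state priority construction for (3)---and they are correct; you have simply supplied what the paper chose to delegate to the literature.
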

\begin{remark}\label{rk:classicRec}$\\ $
{\bf 1.}
Since every $\Pi^0_n$ set is $\Sigma^0_{n+1}$, point 2
of the above proposition yields that every infinite $\Pi^0_n$
set contains an infinite $\Delta^0_{n+1}$ subset.
This cannot be improved: the complement of any maximal
recursively enumerable set is an infinite $\Pi^0_1$ set which
does not contain any infinite recursive set.
\medskip\\
{\bf 2.}
Any total function $\psi$ with graph in $\Sigma^0_n$ is
in fact $\emptyset^{(n-1)}$-recursive and has graph in
$\Delta^0_n$ since
$\ y\neq\psi(x)\ \Leftrightarrow\ \exists z\neq y\ z=\psi(x)$.
\end{remark}
%
\subsection{Partial computable functionals}
\label{ss:functionals}
%
Def.\ref{def:functionals} is classical, cf. Rogers
\cite{rogers} p.361, or Odifreddi \cite{odifreddi} p.178.
\begin{definition}\label{def:functionals}
A (partial) functional ${\cal F}:\bbbx\times P(\bbbn)\to\bbby$
is partial computable if there exists an oracle Turing machine
${\cal M}$ such that, given $A\in P(\bbbn)$ as oracle and
$\ttx\in\bbbx$ as input,
\\\indent- ${\cal M}$ halts and accepts if and only if
            ${\cal F}(A,\ttx)$ is defined,
\\\indent- if ${\cal M}$ halts and accepts then its output is
            ${\cal F}(A,\ttx)$.
\medskip\\
The family of partial computable functionals
$\bbbx\times P(\bbbn)\to\bbby$
is denoted $PC^{\bbbx\times P(\bbbn)\to\bbby}$.
\end{definition}
The notion of acceptable enumeration of partial recursive
functions (cf. Rogers \cite {rogers} Ex. 2.10 p.41,
or Odifrreddi \cite{odifreddi}, p.215)
extends to functionals.
\begin{definition}\label{def:acceptable}
We denote $\bbbx,\bbby,\bbbz$ some basic sets
(cf. Notation \ref{not:start}).
\medskip\\
{\bf 1.}
An enumeration $(\Phi_i)_{i\in\bbbn}$ of partial computable
functionals $\bbbx\times P(\bbbn)\to\bbby$
is {\em acceptable} if
\begin{enumerate}
\item[i.]
$(i,\ttx,A)\mapsto\Phi_i(\ttx,A)$ is a partial computable
functional.
\item[ii.]
Every partial computable functional $\bbbx\times P(\bbbn)\to\bbby$
is enumerated:
$$\forall \Psi\in PC^{\bbbx\times P(\bbbn)\to\bbby}\
\exists i\ \Phi_i=\Psi$$
\item[iii.]
the parametrization (also called s-m-n) property holds:
for every basic set $\bbbz$, there exists a total recursive
function $s^\bbbz_\bbbx:\bbbn\times\bbbz\to\bbbn$ such that
$$\forall i\ \forall\ttz\in\bbbz\ \forall\ttx\in\bbbx\
\forall A\subseteq\bbbn\ \
\Phi_i(\couple\ttz \ttx,A)
=\Phi_{s^\bbbz_\bbbx(i,\ttz)}(\ttx,A)$$
where $\couple\ttz\ttx$ is the image of the pair $(\ttz,\ttx)$
by some fixed total recursive bijection
$\bbbz\times\bbbx\to\bbbx$ (cf. Notation \ref{not:start}).
\end{enumerate}
{\bf 2.}
An enumeration $({\cal W}_i)_{i\in\bbbn}$ of
$\Sigma^0_1$ subsets of $\bbbx\times P(\bbbn)$
is {\em acceptable} if there exists an {\em acceptable}
enumeration $(\Phi_i)_{i\in\bbbn}$ of partial recursive
functionals such that ${\cal W}_i$ is the domain of $\Phi_i$.
\\
In particular, $({\cal W}_i)_{i\in\bbbn}$ is $\Sigma^0_1$ as
a subset of $\bbbn\times\bbbx\times P(\bbbn)$.
\end{definition}
\begin{proposition}\label{p:acceptable}
There exists an acceptable enumeration of partial computable
functionals $\bbbx\times P(\bbbn)\to\bbby$.
\end{proposition}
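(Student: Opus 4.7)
The plan is to mimic the classical construction of an acceptable enumeration of partial recursive functions, replacing ordinary Turing machines by oracle Turing machines and checking that the additional $P(\bbbn)$ argument causes no trouble.

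First I would fix a standard effective enumeration $(M_i)_{i\in\bbbn}$ of oracle Turing machines, where each $M_i$ has an input tape for elements of $\bbbx$ (suitably encoded) and an oracle tape used to query membership of integers in a set $A\subseteq\bbbn$. I define $\Phi_i(\ttx,A)$ to be the output of $M_i$ on input $\ttx$ with oracle $A$, when this computation halts and accepts; otherwise $\Phi_i(\ttx,A)$ is undefined. By Definition \ref{def:functionals}, each $\Phi_i$ is a partial computable functional in $PC^{\bbbx\times P(\bbbn)\to\bbby}$, and condition (ii) of Def.\ref{def:acceptable} holds immediately: every $\Psi\in PC^{\bbbx\times P(\bbbn)\to\bbby}$ is by definition computed by some oracle machine, which appears in the enumeration.

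For condition (i), I would build a universal oracle machine $U$ which, on input $\couple i \ttx$ and oracle $A$, decodes $i$, simulates $M_i$ on input $\ttx$ step by step, relaying every oracle query of $M_i$ to its own oracle tape $A$, and outputs whatever $M_i$ outputs. Standard universality arguments show $U$ is an oracle machine, so the functional $(i,\ttx,A)\mapsto\Phi_i(\ttx,A)$ is itself in $PC^{\bbbn\times\bbbx\times P(\bbbn)\to\bbby}$.

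For the s-m-n property (iii), given $i$ and $\ttz\in\bbbz$ I want to produce an index of the functional $(\ttx,A)\mapsto\Phi_i(\couple{\ttz}{\ttx},A)$. The construction is uniform and oracle-free: from $i$ and $\ttz$ one mechanically writes down the code of an oracle machine $N_{i,\ttz}$ which, on input $\ttx$ and any oracle $A$, first computes $\couple{\ttz}{\ttx}$ using the fixed total recursive pairing of Notation \ref{not:start}, then simulates $M_i$ on that pair with oracle $A$. The map $(i,\ttz)\mapsto\text{code of }N_{i,\ttz}$ is a total recursive function $s^\bbbz_\bbbx:\bbbn\times\bbbz\to\bbbn$, and by construction $\Phi_{s^\bbbz_\bbbx(i,\ttz)}(\ttx,A)=\Phi_i(\couple{\ttz}{\ttx},A)$ for all $\ttx,A$.

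The only point that deserves attention is the fact that $s^\bbbz_\bbbx$ is genuinely \emph{total recursive}, independent of the oracle: this is where the functional formulation could be delicate, but since the hard-coding of $\ttz$ into $N_{i,\ttz}$ only consults the pairing bijection (which is total recursive by Notation \ref{not:start}) and never touches the oracle tape, totality is immediate. Thus the expected main obstacle is really a verification rather than an obstacle; the proof reduces entirely to inspecting that oracle queries pass transparently through the universal simulation and the $s$-$m$-$n$ construction.
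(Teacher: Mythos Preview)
Your argument is correct and is exactly the standard construction one would expect. The paper itself states this proposition without proof, treating it as a well-known extension of the classical existence of acceptable enumerations to the oracle/functional setting; your sketch fills in precisely the details the paper omits, including the one point worth flagging---that the parametrization function $s^\bbbz_\bbbx$ is genuinely total recursive (oracle-free) because the hard-coding of $\ttz$ never consults the oracle.
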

%
%
\subsection{Uniform relativization}
\label{ss:uniform}
%
When dealing with oracles $A$, it is often possible to get
results involving {\em recursive} transfer functions
rather than $A$-recursive ones.
To do so, we must consider enumerations of $A$-r.e. sets
and partial $A$-recursive functions which are obtained from
enumerations of {\em partial computable functionals} by fixing
the second order argument $A$.
Such enumerations will be called {\em uniform enumerations}.
\\
This amounts to consider relative computability as a concept
dependent on the prior notion of partial computable functional,
though, historically, relative computability came first,
cf. Hinman's book \cite{hinman} 5.15 p.68.
\begin{proposition}\label{p:uniform}$\\ $
Let $(\Phi_i)_{i\in\bbbn}$ be an acceptable enumeration of
partial computable functionals $\bbbx\times P(\bbbn)\to\bbby$.
\\
For $A\subseteq\bbbn$, define $\varphi^A_i:\bbbx\to\bbby$
and $W^A_i\subseteq\bbbx$ from $\Phi_i$ and ${\cal W}_i$ by
fixing the second order argument as follows:
\medskip\\\medskip\centerline{$\begin{array}{rclcrclcl}
\varphi^A_i(\ttx)&=&\Phi_i(\ttx,A)
&&W^A_i&=&domain(\varphi^A_i)&=&\{\ttx:(\ttx,A)\in{\cal W}_i\}
\end{array}$}
Then the sequences $(\varphi^A_i)_{i\in\bbbn}$ and
$(W^A_i)_{i\in\bbbn}$ are acceptable enumerations of the family
$PR^{\XY,A}$) of partial $A$-recursive functions $\XY$ and that of
$A$-r.e. subsets of $\bbbx$.
\\
Such acceptable enumerations are called {\em uniform enumerations}.
\end{proposition}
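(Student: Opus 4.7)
The plan is to verify, for fixed $A\subseteq\bbbn$, the three clauses of Definition \ref{def:acceptable} for the sequence $(\varphi^A_i)_{i\in\bbbn}$, using the corresponding clauses for the underlying acceptable enumeration $(\Phi_i)_{i\in\bbbn}$ of functionals; the parallel statement for $(W^A_i)_{i\in\bbbn}$ then follows automatically since $W^A_i$ is by definition the domain of $\varphi^A_i$. The easy part is clause (i) together with the basic membership check. Each $\varphi^A_i$ is partial $A$-recursive because an oracle Turing machine witnessing $\Phi_i\in PC^{\bbbx\times P(\bbbn)\to\bbby}$, when supplied with $A$, is an $A$-oracle machine computing $\varphi^A_i$. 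For the uniform indexing, clause (i) for $(\Phi_i)$ says that $(i,\ttx,B)\mapsto\Phi_i(\ttx,B)$ is a partial computable functional; fixing its second-order argument to $A$ makes $(i,\ttx)\mapsto\varphi^A_i(\ttx)$ partial $A$-recursive.

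The main obstacle is the covering clause (ii): every partial $A$-recursive $\psi:\bbbx\to\bbby$ must appear as some $\varphi^A_i$. For this I would lift $\psi$ to a partial computable functional. Pick an oracle Turing machine $\mathcal{M}$ that computes $\psi$ when given $A$ as oracle, and define $\Psi(\ttx,B)$ to be the value $\mathcal{M}$ outputs on input $\ttx$ when run with oracle $B$, undefined otherwise. Then $\Psi\in PC^{\bbbx\times P(\bbbn)\to\bbby}$ directly by Definition \ref{def:functionals}, and $\Psi(\ttx,A)=\psi(\ttx)$ by construction. By clause (ii) for $(\Phi_i)$ there is an index $i$ with $\Phi_i=\Psi$, whence $\varphi^A_i=\psi$. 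The only subtlety to double-check is that ``an oracle Turing machine computing $\psi$ with oracle $A$'' is the same primitive object appearing in Definition \ref{def:functionals}, so that there is no ambiguity in what $\Psi(\ttx,B)$ means for oracles $B\neq A$.

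Finally, the uniform s-m-n property is inherited directly from clause (iii) for $(\Phi_i)$: the very same function $s^\bbbz_\bbbx:\bbbn\times\bbbz\to\bbbn$ provided by that clause, which is \emph{totally recursive} and in particular does not depend on $A$, satisfies
$$\varphi^A_{s^\bbbz_\bbbx(i,\ttz)}(\ttx)
=\Phi_{s^\bbbz_\bbbx(i,\ttz)}(\ttx,A)
=\Phi_i(\couple\ttz\ttx,A)
=\varphi^A_i(\couple\ttz\ttx)$$
for all $i,\ttz,\ttx$. The fact that this transfer function is recursive rather than merely $A$-recursive, and is chosen uniformly for all oracles $A$ simultaneously, is precisely what the functional formulation buys us and is the feature that distinguishes \emph{uniform} enumerations from arbitrary acceptable enumerations of $PR^{\XY,A}$.
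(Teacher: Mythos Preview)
Your proof is correct. The paper actually states Proposition~\ref{p:uniform} without proof, treating it as a standard fact; your verification of clauses (i)--(iii) is exactly the natural argument one would supply, and in particular your handling of the covering clause~(ii) --- lifting an $A$-oracle machine to a functional by letting the oracle vary --- and your observation that the same recursive $s^\bbbz_\bbbx$ works uniformly for all $A$ are precisely the points the paper relies on implicitly in the surrounding discussion.
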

Rogers' theorem (cf. Odifreddi \cite{odifreddi} p.219) extends
to partial computable funtionals, hence to uniform enumerations.
\begin{theorem}\label{thm:rogers}$\\ $
{\bf 1. (Rogers' theorem)}
If $(\Psi_i)_{i\in\bbbn}$ and $(\Phi_i )_{i\in\bbbn}$ are both
acceptable enumerations of partial computable functionals
$\bbbx\times P(\bbbn)\to\bbby$ , then there exists some
recursive bijection $\theta:\bbbn\to\bbbn$ such that
$\Psi_i=\Phi_{\theta(i)}$ for all $i\in\bbbn$.
\medskip\\
{\bf 2.}
If $(\psi^A_i)_{i\in\bbbn}$ and $(\varphi^A_i)_{i\in\bbbn}$ are
uniform enumerations of partial $A$-recursive functions
then there exists some {\em recursive} bijection
$\theta:\bbbn\to\bbbn$ such that
$\psi^A_i=\varphi^A_{\theta(i)}$ for all $i\in\bbbn$.
\end{theorem}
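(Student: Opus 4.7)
The plan is to adapt the classical proof of Rogers' isomorphism theorem, carrying along the second-order argument $A$ as an inert parameter; the upshot is that the bijection $\theta$ we produce depends only on the two enumerations and is in particular oracle-free, from which Part~2 will follow by a direct instantiation.

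First I would extract mutual effective translations. The ``universal'' map $(i,\ttx,A)\mapsto\Psi_i(\ttx,A)$ is a partial computable functional by condition (i); packing the pair $(i,\ttx)$ into $\bbbx$ via the fixed recursive bijection of Notation~\ref{not:start}, condition (ii) for $(\Phi_j)$ yields an index $e$ with $\Phi_e(\couple i \ttx,A)=\Psi_i(\ttx,A)$, and the s-m-n property (iii) then gives a total recursive $f:\bbbn\to\bbbn$ with $\Phi_{f(i)}=\Psi_i$ as functionals. Symmetrically, there is a total recursive $g$ with $\Psi_{g(j)}=\Phi_j$. The same s-m-n trick, applied this time to $(\couple i k,\ttx,A)\mapsto\Phi_i(\ttx,A)$ and arranged to be injective in $k$, furnishes an injective recursive padding function $p_\Phi:\bbbn\times\bbbn\to\bbbn$ satisfying $\Phi_{p_\Phi(i,k)}=\Phi_i$ for all $i,k$; likewise one obtains $p_\Psi$ for the $\Psi$-enumeration.

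Next I would construct $\theta$ by an effective back-and-forth, maintaining at each stage a finite partial bijection extending the previous ones. At stage $2n$, if $n$ is not yet in the domain, set $\theta(n):=p_\Phi(f(n),k)$ with $k$ least such that this value avoids the current range; the padding lemma guarantees $\Phi_{\theta(n)}=\Phi_{f(n)}=\Psi_n$. At stage $2n+1$, if $n$ is not yet in the range, set $\theta(p_\Psi(g(n),k)):=n$ with $k$ least making $p_\Psi(g(n),k)$ fresh in the domain, which gives $\Psi_{p_\Psi(g(n),k)}=\Psi_{g(n)}=\Phi_n$. Injectivity of the padding functions ensures such a $k$ exists at every stage, so the process is uniformly computable and the resulting $\theta$ is a total recursive bijection with $\Psi_i=\Phi_{\theta(i)}$. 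The delicate point is precisely this step: one must simultaneously preserve semantic correctness and injectivity, and the only way to reconcile the two is the padding lemma, since the naive indices $f(n),g(n)$ may collide with values already used and, once placed, cannot be retracted. For Part~2, notice that $\theta$ depends only on the functional enumerations and not on $A$; by Proposition~\ref{p:uniform} the given uniform enumerations arise from some acceptable enumerations of functionals by fixing $A$, and instantiating the functional identity $\Psi_i(\cdot,A)=\Phi_{\theta(i)}(\cdot,A)$ at the oracle $A$ yields $\psi^A_i=\varphi^A_{\theta(i)}$ with the very same recursive bijection $\theta$.
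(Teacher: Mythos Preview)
The paper does not supply a proof of this theorem: it merely remarks, just before the statement, that ``Rogers' theorem (cf.\ Odifreddi \cite{odifreddi} p.219) extends to partial computable functionals, hence to uniform enumerations,'' and leaves it at that. Your proposal is therefore not competing with any argument in the paper; it is filling in what the authors chose to omit.

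Your argument is correct and is exactly the classical Rogers--Myhill proof transported to the functional setting: obtain mutual reductions $f,g$ from universality plus s-m-n, obtain padding functions, and run an effective back-and-forth. The derivation of Part~2 from Part~1 by freezing the second-order argument is precisely the point the paper wants to make with the word ``uniform.'' One small technical remark: the phrase ``arranged to be injective in $k$'' hides the usual subtlety that the s-m-n function $s^\bbbz_\bbbx$ of Definition~\ref{def:acceptable} is not assumed injective, so the padding function $p_\Phi(i,k)=s^\bbbz_\bbbx(e,\couple i k)$ need not be injective a priori. You should either invoke the standard fact that any acceptable numbering admits an injective s-m-n (provable from the axioms you have, e.g.\ via a range-avoidance search as in Odifreddi's treatment), or replace the back-and-forth with the version that searches, at each step, for the least fresh $k$ such that $p_\Phi(f(n),k)$ (resp.\ $p_\Psi(g(n),k)$) is not yet used---which only needs that each $p_\Phi(i,\cdot)$ has infinite range, a weaker and immediate consequence of there being infinitely many indices for each functional. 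Either fix is routine, and with it your proof is complete.
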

Uniform enumerations allow for {\em effective} (as opposed to
$A$-effective) closure results for a lot of operations on
partial $A$-recursive functions and $A$-r.e. sets which
correspond to closure properties of partial computable
functionals admitting sets and partial functions as arguments,
cf. Hinman \cite{hinman} {\S}II.2, II.4.
%
\subsection{Acceptable enumerations of some subclasses of
$\Delta^0_2$}
\label{ss:Delta02}
%
Comparison of $K$ and $\kmin[],\kmax[]$ in the hierarchy theorem
\ref{thm:hierarchy} involves particular $\Delta^0_2$ sets described
in Def.\ref{def:Dclass} below.
First, we fix a notion of bounded quantification pertinent for
our applications.
\begin{definition}\label{def:existsleq}$\\ $
{\bf 1.}
We consider on each basic set a norm such that
\\\indent- $||x||=|x|$ if $x\in\bbbn\mbox{ or }\bbbz$,
\\\indent- $||x||=length(x)$ if $x\in\Sigma^*$ where $\Sigma$
            is a finite alphabet,
\\\indent- $||(x_1,...,x_k)||=\max(||x_1||,...,||x_1||)$.
\medskip\\
{\bf 2.}
Suppose $\mu:\bbbn\to\bbbn$ is a total function
(resp. $\mu:\bbbn\times P(\bbbn)\to\bbbn$ is a total functional)
which is monotone increasing (resp. with respect to its first
argument).
Let $\bbbx$ is a basic set.
\\
For $R\subseteq\bbbx\times(\words)^m$ and
${\cal R}\subseteq\bbbx\times(\words)^m\times P(\bbbn)$,
we let
\begin{eqnarray*}
\exists^{\leq\mu}R&=&\{\ttx : \exists\vec{\ttu}\
(|\ttu_1|,\ldots,|\ttu_m|\leq\mu(||\ttx||)\ \wedge\
R(\vec{\ttu},\ttx))\}
\\
\exists^{\leq\mu}{\cal R}&=&\{(\ttx,A) : \exists\vec{\ttu}\
(|\ttu_1|,\ldots,|\ttu_m|\leq\mu(||\ttx||)\ \wedge\
{\cal R}(\vec{\ttu},\ttx,A))\}
\end{eqnarray*}
If ${\cal C}\subseteq P(\bbbx)$
(resp. ${\cal C}\subseteq P(\bbbx\times P(\bbbn))$, we denote
$\ \exists^{\leq\mu}{\cal C}\ $
the subclass of subsets of $\bbbx$
(resp. $\bbbx\times P(\bbbn)$) consisting of all sets
$\ \exists^{\leq\mu}R\ $ where $R$ is in ${\cal C}$.
\end{definition}
\begin{note}
In view of applications to Kolmogorov complexity, we choose
bounded quantifications over binary words (where the bound applies
to the length).
Of course, going from $\mu$ to $2^{\mu}$, we can reduce to
bounded quantifications over $\bbbn$.
\end{note}
As is well known, bounded quantification does not increase
syntactical complexity of $\Delta^0_2$ sets.
\begin{proposition}\label{p:boundedDelta02}
If $\mu:\bbbn\to\bbbn$ has $\Sigma^0_2$ graph then
$\ \exists^{\leq\mu}\Delta^0_2 \subseteq\Delta^0_2$,
be it for relations in $\bbbx$ or in $\bbbx\times P(\bbbn)$.
\end{proposition}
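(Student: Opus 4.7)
The plan is to use the characterization of $\Delta^0_2$ given by Post's theorem (Proposition \ref{p:classicRec}.1), namely that a set is $\Delta^0_2$ iff it is recursive in $\emptyset'$, and for relations depending on a set parameter $A$, that being $\Delta^0_2$ amounts to being $A'$-recursive uniformly in $A$. Together with Remark \ref{rk:classicRec}.2, this lets us replace syntactic complexity considerations by oracle computations, after which the statement reduces to the observation that a bounded search whose bound can itself be computed from the oracle remains in the same oracle degree.

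First I would unpack the hypothesis on $\mu$: by Remark \ref{rk:classicRec}.2, the total function $\mu$, having $\Sigma^0_2$ graph, is $\emptyset'$-recursive with $\Delta^0_2$ graph. Then I would treat the case of relations $R\subseteq\bbbx\times(\words)^m$. Since $R$ is $\Delta^0_2$, it is $\emptyset'$-recursive (again by Post's theorem). I would describe the following $\emptyset'$-algorithm deciding $\exists^{\leq\mu}R$: given $\ttx\in\bbbx$, first compute $n=\mu(||\ttx||)$ using the $\emptyset'$-oracle; then enumerate the finitely many tuples $\vec{\ttu}=(\ttu_1,\ldots,\ttu_m)$ with $|\ttu_1|,\ldots,|\ttu_m|\leq n$; for each such $\vec{\ttu}$ query the $\emptyset'$-oracle for the value of $R(\vec{\ttu},\ttx)$; accept iff some $\vec{\ttu}$ satisfies $R$. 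This is an $\emptyset'$-decision procedure, so $\exists^{\leq\mu}R\in\Delta^0_2$.

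For the functional version $\mathcal{R}\subseteq\bbbx\times(\words)^m\times P(\bbbn)$, I would repeat the same argument uniformly in $A$. By the relativized Post's theorem, $\mathcal{R}\in\Delta^0_2$ means the family of relations $\mathcal{R}(\cdot,A)$ is uniformly $A'$-recursive. Since $\mu$ does not depend on $A$ and $\emptyset'\leq_T A'$ holds with a fixed reduction index (independent of $A$), the bound $n=\mu(||\ttx||)$ can still be computed from $A'$ uniformly in $A$. Then, as before, one checks the finitely many values $\mathcal{R}(\vec{\ttu},\ttx,A)$ using the $A'$-oracle. This uniform $A'$-decision procedure witnesses $\exists^{\leq\mu}\mathcal{R}\in\Delta^0_2$.

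There is essentially no obstacle: the content of the proof is the classical slogan that bounded search preserves the Turing degree of the bounding function and the searched predicate. The only mildly delicate point is being consistent about what $\Delta^0_2$ means for relations with a second-order argument; this is precisely why Post's theorem is invoked in its uniform form, as guaranteed by the partial computable functional framework of \S\ref{s:functional} (cf. Proposition \ref{p:uniform}), and why the fact that $\mu$ is independent of $A$ suffices to keep the computation of the bound uniform in $A$.
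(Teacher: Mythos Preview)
Your proof is correct but takes a different route from the paper. The paper argues purely syntactically: it rewrites $\exists\vec{\ttu}\leq\mu(||\ttx||)\,R(\ttx,\vec{\ttu})$ in the two equivalent forms
\[
\exists y\,(y=\mu(||\ttx||)\wedge\exists\vec{\ttu}\leq y\,R(\ttx,\vec{\ttu}))
\quad\text{and}\quad
\forall y\,(y=\mu(||\ttx||)\Rightarrow\exists\vec{\ttu}\leq y\,R(\ttx,\vec{\ttu})),
\]
notes that the inner part $\exists\vec{\ttu}\leq y\,R(\ttx,\vec{\ttu})$ is $\Delta^0_2$ (the trivial case $\mu=\mathrm{id}$, by commutation of bounded with unbounded quantifiers), and reads off that the two forms are $\exists(\Sigma^0_2\wedge\Delta^0_2)=\Sigma^0_2$ and $\forall(\Pi^0_2\vee\Delta^0_2)=\Pi^0_2$ respectively. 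Your semantic argument via Post's theorem is equally valid and arguably more transparent about \emph{why} the result holds. The syntactic approach buys one thing: it treats the first-order case and the functional case $\bbbx\times P(\bbbn)$ with literally the same formula manipulation, so no separate discussion of a uniform functional Post theorem or of the uniform reduction $\emptyset'\leq_T A'$ is needed. Your handling of the functional case is correct, but it does rest on those extra (standard) facts, which you rightly flag as the only delicate point.
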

\begin{proof}
In case $\mu(x)=x$ this is just the commutation of a bounded
quantification with an unbounded one.
In general, we have
\medskip\\
$\exists\vec{\ttu}\ (\vec{\ttu}\leq\mu(||\ttx||)\
\wedge\ R(\ttx,\vec{\ttu}))$
\\\indent$\Leftrightarrow\ \exists y\ (y=\mu(||\ttx||)\ \wedge\
\exists\vec{\ttu}\ (\vec{\ttu}\leq y\ \wedge\ R(\ttx,\vec{\ttu})))$
\\\indent$\Leftrightarrow\ \forall y\ (y=\mu(||\ttx||)\ \Rightarrow\
\exists\vec{\ttu}\ (\vec{\ttu}\leq y\ \wedge\ R(\ttx,\vec{\ttu})))$
\medskip\\
which are respectively $\exists\ (\Sigma^0_2\wedge\Delta^0_2)$,
hence $\Sigma^0_2$, and $\forall\ (\Pi^0_2\vee\Delta^0_2)$,
hence $\Pi^0_2$.
\end{proof}
\begin{definition}\label{def:Dclass}$\\ $
Let ${\cal C}$ be a syntactical class among
$$\Sigma^0_1\ \ ,\ \ \Pi^0_1\ \ ,\ \ \Sigma^0_1\vee\Pi^0_1
\ \ ,\ \ \exists^{\leq\mu}(\Sigma^0_1\wedge\Pi^0_1)$$
{\bf 1.}
Let ${\cal C}[\bbbx]$ be the family of subsets of $\bbbx$
which are ${\cal C}$-definable.
An acceptable enumeration $(W^{{\cal C}[\bbbx]}_i)_{i\in\bbbn}$
of ${\cal C}[\bbbx]$ is an enumeration obtained from acceptable
enumerations $(W^{\bbbx\times(\words)^m}_i)_{i\in\bbbn}$
of r.e. subsets of the $\bbbx\times(\words)^m$'s as follows:
\medskip\\\medskip\centerline{
$\begin{array}{rcll}
W^{\Pi^0_1[\bbbx]}_i&=&\bbbx\setminus W^\bbbx_i&
\\
W^{\Sigma^0_1\wedge\Pi^0_1[\bbbx]}_i
&=&W^\bbbx_j\cap(\bbbx\setminus W^\bbbx_k)&
\mbox{where }i=\couple j k
\\
W^{\Sigma^0_1\vee\Pi^0_1[\bbbx]}_i
&=&W^\bbbx_j\cup(\bbbx\setminus W^\bbbx_k)&
\mbox{where }i=\couple j k
\\
W^{\exists^{\leq\mu}(\Sigma^0_1\wedge\Pi^0_1)[\bbbx]}_i&=&
\exists^{\leq\mu}
W^{(\Sigma^0_1\wedge\Pi^0_1)[\bbbx\times(\words)^m]}_j&
\mbox{where }i=\couple j m
\end{array}$}
{\bf 2.}
Let ${\cal C}[\bbbx\times P(\bbbn)]$ be the family of subsets of
$\bbbx\times P(\bbbn)$ which are ${\cal C}$-definable.
An acceptable enumeration
$({\cal W}^{{\cal C}[\bbbx]}_i)_{i\in\bbbn}$
of ${\cal C}[\bbbx\times P(\bbbn)]$ is defined similarly from
acceptable enumerations
$({\cal W}^{\bbbx\times(\words)^m\times P(\bbbn)}_i)_{i\in\bbbn}$
of $\Sigma^0_1$ subsets of the $\bbbx\times(\words)^m$'s.
\medskip\\
{\bf 3.} Let $A\subseteq\bbbn$ and ${\cal C}^A$ be the $A$-oracle
syntactical class associated to ${\cal C}$.
\\
An enumeration $(W^{{\cal C}^A[\bbbx]}_i)_{i\in\bbbn}$ of
${\cal C}^A[\bbbx]$ is uniform if it is obtained from an
acceptable enumeration
$({\cal W}^{{\cal C}[\bbbx\times P(\bbbn)]}_i)_{i\in\bbbn}$ of
${\cal C}[\bbbx\times P(\bbbn)]$ by fixing the second order
argument. I.e.
$W^{{\cal C}^A[\bbbx]}_i=\{\ttx\in\bbbx : (\ttx,A)\in
{\cal W}^{{\cal C}[\bbbx\times P(\bbbn)]}_i\}$.
\end{definition}
%
\subsection{The $\min$ and $\max$ operators}
\label{ss:mimaxoperators}
%
The following definitions and results collect material
from \cite{ferbusgrigoKmaxKmin,ferbusgrigoMaxMinRecursion}.
\begin{definition}\label{def:PRminPRmax}
Let $\bbbx$ be some basic set.
We denote $\min$ and $\max$ the operators which map
partial functions $\varphi:\bbbx\times\bbbn\to\bbbn$
and partial functionals
$\Phi:\bbbx\times P(\bbbn)\times\bbbn\to\bbbn$
onto partial functions $\min\varphi, \max\varphi:D\to\bbbn$
and functionals $\min\Phi, \max\Phi:D\to\bbbn$ such that
\begin{eqnarray*}
(\min\varphi)(\ttx)&=&\min\{\varphi(\ttx,t):t\in\bbbn\
                 \wedge\ \varphi(\ttx,t)\mbox{ is defined}\}
\\
(\max\varphi)(\ttx)&=&\max\{\varphi(\ttx,t):t\in\bbbn\
                 \wedge\ \varphi(\ttx,t)\mbox{ is defined}\}
\\
(\min\Phi)(\ttx,A)&=&\min\{\Phi(\ttx,A,t):t\in\bbbn\
                 \wedge\ \phi(\ttx,t)\mbox{ is defined}\}
\\
(\max\Phi)(\ttx,A)&=&\max\{\Phi(\ttx,A,t):t\in\bbbn\
                 \wedge\ \phi(\ttx,t)\mbox{ is defined}\}
\end{eqnarray*}
with the convention that $\min\emptyset$ and $\max\emptyset$
and the $\max$ of an infinite set are undefined.
\medskip\\
{\bf 2.}
We let
\begin{eqnarray*}
\minpr[\bbbx\to\bbbn]&=&
\{\min\varphi : \varphi\in PR^{\bbbx\times\bbbn\to\bbbn}\}
\\
\minprA[\bbbx\to\bbbn]&=&
\{\min\varphi : \varphi\in PR^{\bbbx\times\bbbn\to\bbbn,A}\}
\\
\minpc[\bbbx\times P(\bbbn)\to\bbbn]&=&
\{\min\Phi : \Phi\in PC^{P(\bbbn)\times\bbbx\times\bbbn\to\bbbn}\}
\end{eqnarray*}
The classes $\maxpr[\bbbx\to\bbbn]$, $\maxprA[\bbbx\to\bbbn]$ and
$\maxpc[\bbbx\times P(\bbbn)\to\bbbn]$ are defined similarly
from the $\max$ operator.
\end{definition}
\begin{note}$\\ $
{\bf 1.}
Simple examples of functions in $\minpr[]$ are Kolmogorov
complexities $K$ and $H$. Examples of functions in $\maxpr[]$
are the Busy Beaver function and the (partial) function giving
the cardinal of $W_n$ (if finite).
\medskip\\
{\bf 2.}
The functional ${\cal K}(\ ||\ )$, defined in \S\ref{s:uniform},
is in $\minpc[P(\bbbn)\times\bbbn\to\bbbn]$.
\end{note}
Let's mention an easy result as concerns the syntactical complexity
of these functions.
\begin{proposition}\label{p:syntax}
Any function in $\minpr[\bbbx\to\bbbn]\cup\maxpr[\bbbx\to\bbbn]$
has $\Sigma^0_1\wedge\Pi^0_1$ graph.
The result extends to functionals and also relativizes.
\end{proposition}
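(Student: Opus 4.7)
The plan is to express the graph condition ``$f(\ttx)=n$'' as a conjunction of a ``value $n$ is attained by $\varphi(\ttx,\cdot)$'' clause and an extremality clause, and then to read off the syntactic complexity of each conjunct directly from the fact that $\varphi$ has $\Sigma^0_1$ graph.

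First I would treat the case $f=\min\varphi$ with $\varphi\in PR^{\bbbx\times\bbbn\to\bbbn}$. By definition of the $\min$ operator,
\begin{eqnarray*}
f(\ttx)=n &\Leftrightarrow&
(\exists t\ \varphi(\ttx,t)=n)\ \wedge\
(\forall t\ \forall m\ (\varphi(\ttx,t)=m\Rightarrow m\geq n)).
\end{eqnarray*}
Since the graph of $\varphi$ is $\Sigma^0_1$, the left conjunct is $\Sigma^0_1$. The right conjunct is the negation of $\exists t\ \exists m<n\ \varphi(\ttx,t)=m$, a bounded existential quantification over a $\Sigma^0_1$ predicate, hence itself $\Sigma^0_1$; so the right conjunct is $\Pi^0_1$. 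Thus the graph of $f$ is $\Sigma^0_1\wedge\Pi^0_1$.

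Next I would treat $f=\max\varphi$ in the same way, writing
\begin{eqnarray*}
f(\ttx)=n &\Leftrightarrow&
(\exists t\ \varphi(\ttx,t)=n)\ \wedge\
\neg(\exists t\ \exists m>n\ \varphi(\ttx,t)=m),
\end{eqnarray*}
which again is $\Sigma^0_1\wedge\Pi^0_1$ (the convention that $\max\varphi(\ttx)$ is undefined when $\{\varphi(\ttx,t):t\in\bbbn\}$ is infinite is harmless, since for the finite case the displayed formula correctly defines the value, and for the infinite case the displayed formula is false for every $n$, matching ``$f(\ttx)$ is undefined'').

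Finally I would observe that both transcriptions relativize verbatim to an oracle $A$: replacing the $\Sigma^0_1$ graph of $\varphi$ by the $\Sigma^{0,A}_1$ graph of $\Phi(\cdot,A,\cdot)$ turns the two conjuncts into $\Sigma^{0,A}_1$ and $\Pi^{0,A}_1$ predicates respectively, which is exactly the functional and oracular extension stated. There is no real obstacle here; the only point to check carefully is the extremality clause for $\max$, where one must verify that the formula above captures undefinedness correctly, but this is immediate from the convention in Def.\ref{def:PRminPRmax}.
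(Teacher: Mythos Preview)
Your proof is correct and follows essentially the same approach as the paper: both express the graph condition as the conjunction of an ``attained'' clause (which is $\Sigma^0_1$) and an extremality clause (which is $\Pi^0_1$). The only cosmetic difference is that the paper writes the $\Pi^0_1$ conjunct via a step-counting predicate $\forall t\,\forall s\,(\varphi(\ttx,t)\mbox{ converges in $s$ steps}\Rightarrow y\leq\varphi(\ttx,t))$, whereas you phrase it directly in terms of the $\Sigma^0_1$ graph of $\varphi$; these are equivalent and your treatment of the undefinedness convention for $\max$ is handled correctly.
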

\begin{proof}
Observe that $y=(\min\varphi)(\ttx)$ can be written
$$(\exists t\ y=\varphi(\ttx,t))\ \wedge\
(\forall t\ \forall s\ (\varphi(\ttx,t)\mbox{ converges in $s$
steps }\Rightarrow\ y\leq\varphi(\ttx,t)))$$
Idem for $y=(\max\varphi)(\ttx)$ with $\geq$ in place of $\leq$.
\end{proof}
We shall use the following straightforward corollary of the above
Proposition.
\begin{proposition}\label{p:MinPRsubsetPR0'}
All functions in
$\minpr[]$ and $\maxpr[]$ are partial recursive in $\emptyset'$.
\end{proposition}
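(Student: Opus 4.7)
The plan is to derive this directly from Proposition \ref{p:syntax}, which says that every $f$ in $\minpr[\bbbx\to\bbbn]\cup\maxpr[\bbbx\to\bbbn]$ has $\Sigma^0_1\wedge\Pi^0_1$ graph. The only thing to observe is that a $\Sigma^0_1\wedge\Pi^0_1$ relation lies in both $\Sigma^0_2$ and $\Pi^0_2$, hence in $\Delta^0_2$; by Post's theorem (Proposition \ref{p:classicRec}, part 1) it is therefore recursive in $\emptyset'$. So the graph of $f$ is $\emptyset'$-recursive.

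From an $\emptyset'$-decidable graph one recovers the function itself by the usual unbounded search: to compute $f(\ttx)$, enumerate candidates $y=0,1,2,\dots$ and, using the $\emptyset'$ oracle, test whether $(\ttx,y)$ lies in the graph; return the first $y$ for which the test succeeds. This procedure halts exactly when $f(\ttx)$ is defined, and then outputs $f(\ttx)$, so $f$ is partial $\emptyset'$-recursive.

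The same argument works verbatim in the functional setting of Proposition \ref{p:syntax}: the graph of $\min\Phi$ or $\max\Phi$, viewed as a subset of $\bbbx\times P(\bbbn)\times\bbbn$, is $\Sigma^0_1\wedge\Pi^0_1$, and relativizing Post's theorem to an arbitrary oracle $A$ shows that the $A$-instantiation is partial $A\oplus\emptyset'$-recursive, which under the standard convention amounts to partial recursive in $A'$.

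There is no real obstacle here; the only point worth checking carefully is the implication $\Sigma^0_1\wedge\Pi^0_1\subseteq\Delta^0_2$, which is immediate since a $\Sigma^0_1$ set is both $\Sigma^0_2$ and $\Pi^0_2$, a $\Pi^0_1$ set likewise, and $\Delta^0_2$ is closed under finite intersection.
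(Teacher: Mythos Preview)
Your argument is correct and is exactly the route the paper has in mind: the text introduces the proposition as a ``straightforward corollary'' of Proposition~\ref{p:syntax} and gives no further proof, so your spelling out of $\Sigma^0_1\wedge\Pi^0_1\subseteq\Delta^0_2$ together with Post's theorem and unbounded search over the graph is precisely the intended derivation.

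One small wording issue in your final paragraph (which goes beyond what the proposition requires): saying that partial $A\oplus\emptyset'$-recursive ``amounts to'' partial recursive in $A'$ suggests an equivalence, but in general $A\oplus\emptyset'$ and $A'$ are not of the same Turing degree. What you actually need is only the one-way reduction $A\oplus\emptyset'\leq_T A'$, which suffices for the conclusion; alternatively, and more directly, observe that the $A$-slice of a $\Sigma^0_1\wedge\Pi^0_1$ functional relation is $\Sigma^{0,A}_1\wedge\Pi^{0,A}_1\subseteq\Delta^{0,A}_2$, hence recursive in $A'$ by Post's theorem relativized to $A$.
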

An enumeration theorem holds for the families introduced
in Def.\ref{def:PRminPRmax}.
\begin{proposition}\label{p:enumPRminPRmax}
There exists an acceptable enumeration $(\phi_i)_{i\in\bbbn}$ of
$\minpr[\XN]$ (where acceptable means that the analogs of
conditions i--iii of Def.\ref{def:acceptable} hold.
In particular, the function $(i,\ttx)\mapsto\phi_i(\ttx)$ is
itself in $\minpr[\NXN]$).
\\
Idem with the class $\maxpr[\XN]$ and the functional classes
$\minpc[\bbbx\times P(\bbbn)\to\bbbn]$
and $\minpc[\bbbx\times P(\bbbn)\to\bbbn]$.
\end{proposition}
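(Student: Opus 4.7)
The plan is to build the desired enumerations by composing the operators $\min$ and $\max$ with already-existing acceptable enumerations of the underlying partial recursive functions and partial computable functionals, and then verify the three conditions of Def.~\ref{def:acceptable} one by one.

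First, fix an acceptable enumeration $(\psi_i)_{i\in\bbbn}$ of $PR^{\bbbx\times\bbbn\to\bbbn}$ (whose existence is standard) and set $\phi_i=\min\psi_i$. For condition (ii), every $f\in\minpr[\XN]$ is by definition of the form $\min\varphi$ for some $\varphi\in PR^{\bbbx\times\bbbn\to\bbbn}$; picking $i$ with $\psi_i=\varphi$ gives $\phi_i=f$. For condition (iii), acceptability of $(\psi_i)$ provides, for each basic $\bbbz$, a total recursive $s^\bbbz_{\bbbx\times\bbbn}$ with $\psi_i(\couple{\ttz}{\ttx},t)=\psi_{s(i,\ttz)}(\ttx,t)$ for all $i,\ttz,\ttx,t$. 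Taking the minimum over $t$ on both sides yields $\phi_i(\couple{\ttz}{\ttx})=\phi_{s(i,\ttz)}(\ttx)$, so the same $s$ witnesses the parametrization property for $(\phi_i)$.

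The crux of the argument is condition (i), which asks that $(i,\ttx)\mapsto\phi_i(\ttx)$ lie itself in $\minpr[\NXN]$ (not merely be partial recursive in some looser sense). Here one uses acceptability of $(\psi_i)$ in the form: the map $\widetilde{\psi}:(i,\ttx,t)\mapsto\psi_i(\ttx,t)$ is partial recursive, hence belongs to $PR^{\bbbn\times\bbbx\times\bbbn\to\bbbn}$. Applying the $\min$ operator in the last coordinate $t$ gives $\min\widetilde{\psi}\in\minpr[\NXN]$, and by construction $(\min\widetilde{\psi})(i,\ttx)=\min_t\psi_i(\ttx,t)=\phi_i(\ttx)$. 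So the universal function of the enumeration is itself in $\minpr[]$, as required.

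The $\maxpr[\XN]$ case is entirely symmetric: replace $\min$ by $\max$ throughout, noting that the definition (with the convention that $\max$ of an infinite set is undefined) plays no role in the argument since we only commute $\max_t$ with the passage from $\psi_i$ to $\widetilde{\psi}$, which is a purely formal manipulation. For the functional classes $\minpc[\bbbx\times P(\bbbn)\to\bbbn]$ and $\maxpc[\bbbx\times P(\bbbn)\to\bbbn]$, the same scheme applies, but starting from an acceptable enumeration $(\Phi_i)_{i\in\bbbn}$ of $PC^{\bbbx\times P(\bbbn)\times\bbbn\to\bbbn}$ (which exists by Prop.~\ref{p:acceptable}) and setting $\Theta_i=\min\Phi_i$ (respectively $\max\Phi_i$); the s-m-n property transfers through the $\min$ (resp. $\max$) exactly as above, and the universal functional $(i,\ttx,A)\mapsto\Theta_i(\ttx,A)$ is obtained by applying the operator to the universal partial computable functional $(i,\ttx,A,t)\mapsto\Phi_i(\ttx,A,t)$. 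I do not foresee a real obstacle; the only point requiring care is the distinction, in condition (i), between being partial recursive and living in the operator-defined class $\minpr[]$, which is handled precisely by the observation that the operator commutes with the universal function construction.
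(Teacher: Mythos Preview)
The paper states this proposition without proof, presumably regarding it as routine (it is attributed to material from \cite{ferbusgrigoKmaxKmin,ferbusgrigoMaxMinRecursion}). Your argument is correct and is exactly the natural construction: push an acceptable enumeration of $PR^{\bbbx\times\bbbn\to\bbbn}$ through the $\min$ (or $\max$) operator and observe that each of the three acceptability conditions survives, the key point being that the operator commutes with the passage to the universal function so that condition~(i) lands in $\minpr[\NXN]$ rather than merely in some larger class.
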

The following simple result about $\minpr[]$ and $\maxpr[]$
will be useful.
\begin{proposition}\label{p:composition}$\\ $
{\bf 1.}
If $\phi\in\minpr[\bbbx\to\bbbn]$ and $f:\bbby\to\bbbx$
is in $\PR[\bbby\to\bbbx]$ then
$\phi\circ f\in\minpr[\bbby\to\bbbn]$.\\
Idem with $\maxpr[]$ in place of $\minpr[]$.
\medskip\\
{\bf 2.}
If $\psi\in\minpr[\bbbn\to\bbbn]$ is monotone increasing
and $\phi\in\minpr[\bbbx\to\bbbn]$ then
$\psi\circ\phi\in\minpr[\bbbx\to\bbbn]$.
\end{proposition}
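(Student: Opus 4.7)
The plan is to unfold the definition of $\minpr[]$ (and $\maxpr[]$) in both parts and exhibit an explicit partial recursive witness.

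For part 1, write $\phi=\min\varphi$ with $\varphi\in PR^{\bbbx\times\bbbn\to\bbbn}$, and consider the function $\tilde\varphi:\bbby\times\bbbn\to\bbbn$ defined by $\tilde\varphi(\tty,t)=\varphi(f(\tty),t)$. Since partial recursive functions are closed under composition, $\tilde\varphi$ is partial recursive. A direct check on the definition then gives $\min\tilde\varphi=\phi\circ f$: when $f(\tty)$ is undefined, both sides are undefined; when $f(\tty)=\ttx$, both sides coincide with $(\min\varphi)(\ttx)$. The $\maxpr[]$ case is identical, replacing $\min$ by $\max$.

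For part 2, write $\psi=\min\eta$ with $\eta\in PR^{\bbbn\times\bbbn\to\bbbn}$ and $\phi=\min\varphi$ with $\varphi\in PR^{\bbbx\times\bbbn\to\bbbn}$. Define $\zeta:\bbbx\times\bbbn\to\bbbn$ by
\[
\zeta(\ttx,\couple t s)=\eta(\varphi(\ttx,t),s),
\]
which is partial recursive (composition of partial recursive functions, together with the recursive pairing $\couple t s$). I claim that $\min\zeta=\psi\circ\phi$. By definition,
\[
(\min\zeta)(\ttx)=\min\{\eta(\varphi(\ttx,t),s):\varphi(\ttx,t)\text{ and }\eta(\varphi(\ttx,t),s)\text{ are defined}\}.
\]
Fixing $t$ and taking the $\min$ over $s$ yields $\psi(\varphi(\ttx,t))$ (using that $\psi$ is total, so the inner set is non-empty whenever $\varphi(\ttx,t)$ is defined). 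Hence $(\min\zeta)(\ttx)=\min\{\psi(\varphi(\ttx,t)):\varphi(\ttx,t)\text{ defined}\}$.

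The final step is where the monotonicity of $\psi$ enters: since the minimum of a non-empty subset of $\bbbn$ is attained, pick $t^\ast$ such that $\varphi(\ttx,t^\ast)=\phi(\ttx)$. For any other $t$, $\varphi(\ttx,t)\geq\varphi(\ttx,t^\ast)$, hence by monotonicity $\psi(\varphi(\ttx,t))\geq\psi(\varphi(\ttx,t^\ast))=\psi(\phi(\ttx))$. Thus $\min\{\psi(\varphi(\ttx,t)):\ldots\}=\psi(\phi(\ttx))=(\psi\circ\phi)(\ttx)$, establishing $\psi\circ\phi=\min\zeta\in\minpr[\bbbx\to\bbbn]$. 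The only subtle point is this last interchange of $\psi$ with the infimum, which is exactly why the monotonicity hypothesis is needed; without it, one would only recover a pointwise inequality and the resulting function need not land in $\minpr[]$.
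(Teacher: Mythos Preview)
Your proof is correct and follows essentially the same route as the paper's. For part~1 both of you compose the underlying partial recursive witness with $f$; for part~2 both of you pair the two time parameters into a single one and use monotonicity of $\psi$ to commute $\psi$ with the inner $\min$, the only difference being that the paper first pushes $\psi$ inside the $\min$ and then unfolds $\psi=\min_u\theta(\cdot,u)$, whereas you first unfold and then regroup. One small remark: you invoke ``$\psi$ is total'' in your parenthetical, but totality is not part of the hypothesis; the paper's argument sidesteps this by treating the commutation $\psi(\min_t\varphi(\ttx,t))=\min_t\psi(\varphi(\ttx,t))$ as an identity of partial functions, which is all that is needed.
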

\begin{proof}
{\em 1.} Let $\phi(\ttx)=\min_t\varphi(\ttx,t)$ where $\varphi$
is partial recursive.
Then $\phi(f(\tty))=\min_t\varphi(f(\tty),t)$ is in
$\minpr[\bbby\to\bbbn]$ since $\varphi(f(\tty),t)$ is in
$\PR[\bbby\to\bbbn]$.
\medskip\\
{\em 2.} Let $\phi(\ttx)=\min_t\varphi(\ttx,t)$ and
$\psi(x)=\min_u\theta(x,u)$
where $\varphi,\theta$ are  partial recursive.
Since $\psi$ is monotone increasing, letting
$(\pi_1,\pi_2):\bbbn\to\bbbn^2$ be the inverse of Cantor
bijection, we have
\begin{eqnarray*}
\psi(\phi(\ttx))&=&\psi({\min}_t\varphi(\ttx,t))\\
&=&{\min}_t(\psi(\varphi(\ttx,t)))\\
&=&{\min}_t({\min}_u\theta(\varphi(\ttx,t),u))\\
&=&{\min}_v\theta(\varphi(\ttx,\pi_1(v)),\pi_2(v))
\end{eqnarray*}
which is in $\minpr[\bbbx\to\bbbn]$ since
$\theta(\varphi(\ttx,\pi_1(v)),\pi_2(v))$ is partial recursive.
\end{proof}
%
\section{Coimmunity and density}
\label{s:constructive}
%
%
\subsection{Constructive coimmunity and constructive density}
\label{ss:immune}
%
A classical result about Kolmogorov complexity $K$,
due to Barzdins (cf. \cite{livitanyi} Thm.2.7.1 iii, p.167,
or Zvonkin \& Levin, \cite{zvonkinlevin} p.92.),
states that if $\varphi$ is total recursive and tends to $+\infty$
then
\\\centerline{$\{x : K(x)<\varphi(x)\}$}
is an r.e. set which meets every infinite r.e. set,
i.e. $\{x : K(x)<\varphi(x)\}\cap W_i$ is an infinite
r.e. set whenever $W_i$ is infinite.
\\ (The case $\varphi$ is monotone increasing is due to
Kolmogorov, cf. \cite{zvonkinlevin} p.90,
or \cite{livitanyi} Thm.2.3.1 iii, p.119--120).
\\ In particular, $K$ has no total recursive unbounded
lower bound.
\medskip\\
In \S\ref{s:barzdins} we extend in various ways this result to sets
which are no more r.e. sets and involve Kolmogorov complexities
$\kmin[]$ or $\kmax[]$.
We also consider effectiveness of such properties in a sense
related to the notion of constructive immunity, first
considered in Xiang Li, 1983 \cite{lixiang}
(cf. Odifreddi's book \cite{odifreddi} p.267).
\begin{definition}\label{def:immune}
Let $(W_i)_{i\in\bbbn}$ be an acceptable enumeration of
recursively enumerable subsets of some basic set $\bbbx$.
\medskip\\
{\bf 1i.} (Dekker, 1958).
A set $X\subseteq\bbbx$ is {\em immune} if
it is infinite and contains no infinite r.e. set.
\medskip\\
{\bf 1ii.} (Xiang Li, 1983 \cite{lixiang}).
A set $X\subseteq\bbbx$ is {\em constructively immune} if
it is infinite and there exists some partial recursive function
$\varphi:\bbbn\to\bbbx$ such that
$$\forall i\ (W_i\ \mbox{\em is infinite }\Rightarrow
\varphi(i)\ \mbox{\em is defined }\wedge\
\varphi(i)\in W_i\setminus X)$$
{\bf 2i.}
A set $Z\subseteq\bbbx$ is $\Sigma^0_1$-dense if it contains
an infinite r.e. subset of any infinite r.e. set included
in $\bbbx$.
\medskip\\
{\bf 2ii.}
A set $Z\subseteq\bbbx$ is constructively $\Sigma^0_1$-dense if
there exists some total recursive function $\lambda$ such that
$$\forall i\ (W_i\ \mbox{\em is infinite }
\Rightarrow\
W_{\lambda(i)}\ \mbox{\em is an infinite subset of }
Z\cap W_i)$$
\end{definition}
\begin{note}\label{note:SigmaDense}
Rogers'Thm.\ref{thm:rogers} insures that the above notion of
constructive immunity and $\Sigma^0_1$-density do not depend
on the chosen enumeration of r.e. sets.
\end{note}
\begin{proposition}\label{p:immune}
$Z\subseteq\bbbx$ is constructively immune
if and only if it is infinite and its complement is
constructively $\Sigma^0_1$-dense.
\end{proposition}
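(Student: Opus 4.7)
The equivalence is a short "ping-pong" argument; the right-to-left direction is immediate, and the left-to-right direction is a standard removal construction using $s$-$m$-$n$. I would fix, once and for all, an acceptable enumeration $(W_i)_{i\in\bbbn}$ of r.e.\ subsets of $\bbbx$; by Note~\ref{note:SigmaDense} the choice is irrelevant.

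\textbf{Easy direction ($\Leftarrow$).} Assume $Z$ is infinite and that $\bbbx\setminus Z$ is constructively $\Sigma^0_1$-dense via a total recursive $\lambda$. I would define the partial recursive $\varphi:\bbbn\to\bbbx$ by letting $\varphi(i)$ be the first element enumerated in $W_{\lambda(i)}$. If $W_i$ is infinite then $W_{\lambda(i)}$ is an infinite subset of $(\bbbx\setminus Z)\cap W_i$, in particular nonempty, hence $\varphi(i)$ is defined and $\varphi(i)\in W_i\setminus Z$, as required.

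\textbf{Main direction ($\Rightarrow$).} Assume $Z$ is constructively immune via the partial recursive $\varphi$. The idea is the obvious ``remove-and-iterate'' scheme: starting from $W_i$, we pick an element $x_0=\varphi(i_0)\in W_i\setminus Z$, then look at $W_i\setminus\{x_0\}$, pick $x_1$ from it, and so on. By $s$-$m$-$n$ applied to the partial recursive function $(j,x)\mapsto\{y\in W_j:y\neq x\}$, fix a total recursive $\rho:\bbbn\times\bbbx\to\bbbn$ with $W_{\rho(j,x)}=W_j\setminus\{x\}$. Now I would describe the partial recursive procedure with parameter $i$: set $i_0:=i$; at stage $n$ compute $x_n:=\varphi(i_n)$, enumerate $x_n$ into the output set, and set $i_{n+1}:=\rho(i_n,x_n)$. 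By $s$-$m$-$n$ this whole procedure yields a total recursive $\lambda$ such that $W_{\lambda(i)}$ equals the set thus enumerated.

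\textbf{Verification.} A short induction shows that if $W_i$ is infinite, then $W_{i_n}=W_i\setminus\{x_0,\dots,x_{n-1}\}$ remains infinite for every $n$. Constructive immunity then guarantees $\varphi(i_n)$ is defined and lies in $W_{i_n}\setminus Z$, so $x_n$ is defined, belongs to $W_i\setminus Z$, and is distinct from $x_0,\dots,x_{n-1}$ (since those were removed). Consequently $W_{\lambda(i)}=\{x_0,x_1,\dots\}$ is an infinite subset of $(\bbbx\setminus Z)\cap W_i$, which is exactly constructive $\Sigma^0_1$-density of $\bbbx\setminus Z$. Infiniteness of $Z$ is explicitly part of the definition on both sides and requires no argument.

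\textbf{Anticipated difficulty.} The only delicate point is getting the index-bookkeeping for the iterated removal right: one must package the whole ``produce $i_{n+1}$ from $i_n$ and $x_n$'' loop as a single partial recursive process in $i$, so that one application of $s$-$m$-$n$ delivers a \emph{total} $\lambda$. The function $\rho$ coming from $s$-$m$-$n$ takes care of this cleanly; no appeal to the recursion theorem is needed.
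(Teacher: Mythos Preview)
Your proof is correct and follows essentially the same approach as the paper: the $\Leftarrow$ direction is identical (take the first element enumerated in $W_{\lambda(i)}$), and for $\Rightarrow$ the paper also runs the remove-and-iterate construction, defining $\mu(i,n)$ with $\mu(i,0)=\varphi(i)$ and $\mu(i,n+1)=\varphi(i_n)$ where $W_{i_n}=W_i\setminus\{\mu(i,m):m\leq n\}$, then using parametrization to get $\lambda$ with $W_{\lambda(i)}=\{\mu(i,m):m\in\bbbn\}$. The only cosmetic difference is that you peel off one element per step via $\rho$, while the paper removes the whole finite set $\{\mu(i,0),\dots,\mu(i,n)\}$ at once; this changes nothing.
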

\begin{proof}
$\Leftarrow$. Let $\varphi(i)$ be the point which appears
first in the enumeration of $W_{\lambda(i)}$ (of course,
$\varphi(i)$ is undefined in case $W_{\lambda(i)}$ is empty).
\medskip\\
$\Rightarrow$.
Define a partial recursive function $\mu(i,n)$ which
satisfies:
\\\indent - $\mu(i,0)=\varphi(i)$
\\\indent - $\mu(i,n+1)=\varphi(i_n)$
where $i_n$ is such that
      $W_{i_n}=W_i\setminus\{\mu(i,m) : m\leq n\}$
\\ Using the parametrization theorem, let $\lambda$ be total
recursive so that $W_{\lambda(i)}=\{\mu(i,m) : m\in\bbbn\}$.
If $W_i$ is infinite then all $\mu(i,m)$'s are defined
and distinct and belong to $W_i\cap Z$.
Thus, $W_{\lambda(i)}$ is an infinite subset of
$W_i\cap Z$.
\end{proof}
In case $Z$ is r.e., constructive $\Sigma^0_1$-density amounts
to say that $Z\cap W_i$ is infinite whenever $W_i$ is
infinite.
\medskip\\
Barzdin's result gives an instance of a constructively
$\Sigma^0_1$-dense r.e. set.
Other examples are maximal r.e. sets.
\begin{proposition}\label{p:maximal}
Any maximal r.e. set $Z$ is constructively $\Sigma^0_1$-dense.
\end{proposition}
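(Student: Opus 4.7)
The plan is to reduce the construction of $\lambda$ to the combinatorial claim that $Z\cap W_i$ is infinite whenever $W_i$ is. Once this claim is in hand, the parametrization theorem supplies $\lambda$ immediately: fixing any r.e.\ index $e$ for $Z$, there is a total recursive $\lambda$ with $W_{\lambda(i)}=W_e\cap W_i=Z\cap W_i$, which is a subset of $Z$ and, by the combinatorial claim, infinite whenever $W_i$ is infinite.

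To prove the combinatorial claim, I would first show that $\bbbn\setminus Z$ is immune, that is, contains no infinite r.e.\ set. Assume $V\subseteq\bbbn\setminus Z$ is r.e.\ and infinite, and apply the maximality of $Z$ to the r.e.\ superset $Z\cup V$. The alternative ``$(Z\cup V)\setminus Z$ finite'' forces $V$ itself to be finite, a contradiction. The alternative ``$\bbbn\setminus(Z\cup V)$ finite'' yields $\bbbn\setminus Z=V\cup F$ with $F$ finite, so that $\bbbn\setminus Z$ is r.e.\ and thus $Z$ is recursive. This however is impossible for a maximal set: a recursive coinfinite $Z$ admits a decomposition $\bbbn\setminus Z=E\cup E'$ into two disjoint infinite recursive halves, and then $Y=Z\cup E$ is an r.e.\ superset of $Z$ with both $Y\setminus Z=E$ and $\bbbn\setminus Y=E'$ infinite, contradicting maximality.

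With $\bbbn\setminus Z$ immune, if $W_i$ were infinite while $Z\cap W_i$ were finite, then $W_i\setminus Z$ would equal $W_i$ minus a finite set, hence would be an infinite r.e.\ subset of $\bbbn\setminus Z$, contradicting immunity. Therefore $Z\cap W_i$ is infinite, and $\lambda$ defined as above is the desired witness of constructive $\Sigma^0_1$-density. The only mildly delicate step is the side remark that no maximal r.e.\ set is recursive; everything else is routine definition-chasing combined with the s-m-n theorem.
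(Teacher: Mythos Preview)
Your proof is correct and follows essentially the same approach as the paper: both reduce to showing $Z\cap W_i$ is infinite whenever $W_i$ is, and both derive the contradiction by applying maximality to $Z\cup W_i$ (equivalently $Z\cup V$) and concluding that $Z$ would then be recursive. You are simply more explicit than the paper in two places---you spell out the s-m-n construction of $\lambda$ and you supply the argument that no maximal r.e.\ set is recursive, which the paper leaves as an unargued ``A contradiction.''
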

\begin{proof}
Let $Z\subseteq\bbbx$ be r.e. where $\bbbx$ is some basic set.
We prove that for every infinite r.e. set $W_i\subseteq\bbbx$
the intersection $Z\cap W_i$ is also infinite.
\\
In fact, suppose $Z\cap W_i$ is finite. Then $W_i\setminus Z$ is
an infinite r.e. set disjoint from $Z$. Thus, $Z'=Z\cup W_i$ is
an r.e. set containing $Z$ such that the difference
$Z'\setminus Z=W_i\setminus Z$ is infinite.
Since $Z$ is maximal this implies that $Z'$ is cofinite.
Thus,
$$\bbbx\setminus Z=(\bbbx\setminus Z')\cup(W_i\setminus(Z\cap W_i))
=A\cup (W_i\setminus B)$$
where $A,B$ are finite sets. Hence $\bbbx\setminus Z$ is r.e.
and, consequently $Z$ is recursive. A contradiction.
\end{proof}
%
\subsection{Uniform constructive density}
\label{ss:uniformDensity}

In order to deal with Kolmogorov complexities
$K^{\emptyset'}, K^{\emptyset''},\ldots$ and their $Min/Max$
versions, we shall consider constructive density for
$\Sigma^0_n$ sets. This will be done through relativization
of $\Sigma^0_1$-density with respect to jump oracle
$\emptyset^{(n-1)}$.
\medskip\\
There is two natural ways to relativize $\Sigma^0_1$-density
to an oracle $A$ :
\begin{enumerate}
\item[$(*)$] Consider the $W^A_i$'s and ask for $\lambda$
       $A$-recursive.
\item[$(**)$] Consider the $W^A_i$'s and ask for $\lambda$
       recursive.
\end{enumerate}
The second way, which is the stronger one, will be the one
pertinent for applications to Kolmogorov complexities.
Of course, to deal with $(**)$, we must consider uniform
enumerations of $A$-r.e. sets and partial $A$-recursive
functions (cf. Prop.\ref{p:uniform}), i.e. we have to consider
the notion of constructive density with functionals.
This will, in fact, give {\em a strong version of $(**)$ in which
$\lambda$ is a total recursive function which does not depend
on $A$.}
\begin{definition}\label{def:SigmaDense}$\\ $
{\bf 1.}
Let ${\cal Z}\subseteq\bbbx\times P(\bbbn)$.
For $A\subseteq\bbbn$, let's denote
${\cal Z}^A=\{\ttx\in\bbbx : (\ttx,A)\in {\cal Z}\}$.
\\
Consider an acceptable enumeration $({\cal W}_i)_{i\in\bbbn}$
of $\Sigma^0_1$ subsets of $\bbbx\times P(\bbbn)$
(cf. Def.\ref{def:acceptable}) and let
$W^A_i=\{\ttx : (\ttx,A)\in{\cal W}_i\}$.
\\
${\cal Z}$ is constructively $\Sigma^0_1$-dense
if there exists some total {\em recursive} function $\lambda$
such that, for all $i\in\bbbn$ and all $A\in P(\bbbn)$,
\medskip\\
$(*)$\ \ $\forall i\in\bbbn\ \forall A\in P(\bbbn)\
(W^A_i \mbox{ \em is infinite}$

\hfill{$\Rightarrow\ W^A_{\lambda(i)}
           \mbox{ \em is an infinite subset of }
                    W^A_i\cap {\cal Z}^A)$}
\medskip\\
{\bf 2.}
$Z\subseteq\bbbx$ is constructively uniformly
$\Sigma^{0,A}_1$-dense if there exists some constructively
$\Sigma^0_1$-dense set ${\cal Z}\subseteq\bbbx\times P(\bbbn)$
such that $Z={\cal Z}^A$.
\\ In particular, there exists some total {\em recursive}
function $\lambda$ such that
\medskip\\
$(**)$\ \ $\forall i\in\bbbn\ (W^A_i \mbox{\em is infinite }
\Rightarrow\ W^A_{\lambda(i)}
           \mbox{\em is an infinite subset of }W^A_i$
\\
When $A=\emptyset^{(n-1)}$ we shall also say that $Z$ is
constructively uniformly $\Sigma^0_n$-dense.
\end{definition}
\begin{note}\label{note:SigmaDenseUniform}
Thm.\ref{thm:rogers} insures that the above notion of
constructive uniform $\Sigma^{0,A}_1$-density does not depend
on the chosen enumeration of $A$-r.e. sets, as long as it is
uniform, cf. Prop.\ref{p:uniform}.
\end{note}
\begin{remark}\label{rk:SigmaDense}
Using Point 2 of Prop.\ref{p:classicRec},
one can suppose that if $W^A_i$ is infinite then
$W^A_{\lambda(i)}$ is $A$-recursive and an $A$-r.e.
code for its complement is given by another total recursive
function $\lambda'$.
\end{remark}
\begin{note}\label{note:Sigma01dense}
In the vein of what we mentioned at the start of
\S\ref{ss:immune},
if $\varphi:\bbbn\to\bbbn$ is total $A$-recursive and tends
to $+\infty$ then Lemma.\ref{l:ggK} insures that
$\{x : K^A(x)<\varphi(x)\}$ is an $A$-r.e. set which
is uniformly constructively $\Sigma^{0,A}_1$-dense.
In case $\varphi(x)\infct\log(x)$, this set is coinfinite since
it excludes integers with incompressible binary representations.
\end{note}
\begin{remark}\label{rk:AImmune}
Immunity can also be relativized according to the different
policies $(*)$ and $(**)$. Also, Prop.\ref{p:immune} admits
straightforward extensions to the functional setting and the
uniform relativized one.
\end{remark}
Finally, let's observe that Prop.\ref{p:maximal} relativizes
in the uniform sense.
\begin{proposition}\label{p:Sigmamaximal}
Any maximal $A$-r.e. set $Z$ is uniformly constructively
$\Sigma^{0,A}_1$-dense.
\end{proposition}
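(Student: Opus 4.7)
The plan is to relativize and ``functional-ize'' the argument of Prop \ref{p:maximal}, the only new ingredient being that the density index map $\lambda$ must come out \emph{recursive} rather than merely $A$-recursive. First I fix an index $e$ with $Z=W^A_e$ (available because $Z$ is $A$-r.e.) and take ${\cal Z}:={\cal W}_e$, the $e$-th $\Sigma^0_1$ subset of $\bbbx\times P(\bbbn)$ under the acceptable enumeration from Def \ref{def:acceptable}(2). Then ${\cal Z}^A=W^A_e=Z$ automatically, so the condition $Z={\cal Z}^A$ of Def \ref{def:SigmaDense}(2) is secured with no further work.

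Next I apply the uniform parametrization property of Def \ref{def:acceptable}(iii) to the partial computable functional
$$(i,\ttx,B)\longmapsto\bigl[(\ttx,B)\in{\cal W}_i\ \wedge\ (\ttx,B)\in{\cal W}_e\bigr].$$
This yields a \emph{total recursive} $\lambda:\bbbn\to\bbbn$ such that, for every oracle $B$ and every $i$,
$$W^B_{\lambda(i)}=W^B_i\cap W^B_e.$$
In particular, $W^A_{\lambda(i)}=W^A_i\cap Z$.

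The core step is then to relativize the proof of Prop \ref{p:maximal} verbatim to oracle $A$. Assume $W^A_i$ is infinite. If $Z\cap W^A_i$ were finite, then $Z'=Z\cup W^A_i$ would be an $A$-r.e. superset of $Z$ whose difference $Z'\setminus Z=W^A_i\setminus Z$ is infinite, so maximality of $Z$ as an $A$-r.e. set forces $\bbbx\setminus Z'$ to be finite; but then
$$\bbbx\setminus Z\ =\ (\bbbx\setminus Z')\ \cup\ \bigl(W^A_i\setminus(Z\cap W^A_i)\bigr)$$
is $A$-r.e., making $Z$ itself $A$-recursive and contradicting maximality. Hence $W^A_{\lambda(i)}=Z\cap W^A_i$ is an infinite subset of $W^A_i\cap Z$ whenever $W^A_i$ is infinite, and the pair $({\cal Z},\lambda)$ witnesses what is required by Def \ref{def:SigmaDense}(2).

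The only delicate point---and where the whole machinery of \S\ref{ss:uniform} earns its keep---is keeping $\lambda$ recursive in the unrelativized sense. This is exactly what the acceptable enumeration of $\Sigma^0_1$ subsets of $\bbbx\times P(\bbbn)$ supplies via s-m-n, with the second-order argument left free. All oracle dependence is absorbed into the single choice of the fixed index $e$ for $Z$, which is admissible since the proposition ranges over ``any'' maximal $A$-r.e. set; the combinatorial maximality argument then transfers from Prop \ref{p:maximal} without modification.
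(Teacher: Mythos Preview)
Your proof is correct and follows essentially the same approach as the paper's: represent $Z$ as a slice ${\cal Z}^A$ of a $\Sigma^0_1$ set ${\cal Z}\subseteq\bbbx\times P(\bbbn)$, use the functional s-m-n property to produce a \emph{recursive} $\lambda$ with $W^B_{\lambda(i)}={\cal Z}^B\cap W^B_i$ for all $B$, and then rerun the maximality argument of Prop.~\ref{p:maximal} at the fixed oracle $A$. The paper's proof is terser (it just says ``the argument of Prop.~\ref{p:maximal} goes through''), but your expanded version is exactly what is meant.
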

\begin{proof}
Let $Z=\{\ttx : (\ttx,A)\in{\cal Z}\}$ where
${\cal Z}\subseteq\bbbx\times P(\bbbn)$ is $\Sigma^0_1$.
There is a total recursive function $\theta$ such that,
for all $A$ and $i$,
${\cal Z}\cap {\cal W}_i={\cal W}_{\theta(i)}$.
In particular, $Z\cap W^A_i=W^A_{\theta(i)}$ and
the argument of Prop.\ref{p:maximal} goes through.
\end{proof}
%
\subsection{Constructive $({\cal C},{\cal D})$-density}
\label{ss:DEDense}

Comparison of $K$ and $\kmin[],\kmax[]$ in the hierarchy theorem
\ref{thm:hierarchy} leads to a particular version of
constructive density applied to $\Sigma^0_1$ and to $\Pi^0_1$ sets
and involving subclasses
$\exists^{\leq\mu}(\Sigma^0_1\wedge\Pi^0_1)$ of $\Delta^0_2$ sets
described in Def.\ref{def:existsleq} below.
\medskip\\
We now introduce some central notions of this paper.
\begin{definition}\label{def:DEDense}
Let $\bbbx$ be a basic set.\\
{\bf 1i.}
Let ${\cal S},{\cal T}$ be families of subsets of $\bbbx$.
A set $Z\subseteq\bbbx$ is $({\cal S},{\cal T})$-dense if
for every infinite set $X\in{\cal S}$ the intersection
$Z\cap X$ contains an infinite subset $Y$ which is
in ${\cal T}$.
\medskip\\
{\bf ii.}
Let ${\cal S},{\cal T}$ be families of subsets of
$\bbbx\times P(\bbbn)$.
A set ${\cal Z}\subseteq\bbbx\times P(\bbbn)$ is
$({\cal S},{\cal T})$-dense if for every
${\cal X}\in{\cal S}$
there exists ${\cal Y}\in{\cal T}$
such that, for every $A$,
letting ${\cal X}^A=\{\ttx : (\ttx,A)\in{\cal X}\}$,
$${\cal X}^A\mbox{ is infinite }\Rightarrow\
{\cal Y}^A\mbox{ is infinite and included in }{\cal X}^A\cap{\cal Z}^A$$
{\bf 2.}
Let ${\cal C},{\cal D}$ be syntactical classes as in
Def.\ref{def:Dclass}.
\medskip\\
{\bf i.}
$Z$ is constructively $({\cal C},{\cal D})$-dense
if it is $({\cal C}[\bbbx],{\cal D}[\bbbx])$-dense in the sense of
1i above and, moreover, a ${\cal D}$-code for $Y$ can be
recursively obtained from a ${\cal C}$-code for $X$.
In other words, there exists some total recursive function
$\lambda:\bbbn\to\bbbn$ such that, for all $i$
$$W^{{\cal C}[\bbbx]}_i\
\mbox{ is infinite }
\ \Rightarrow\ \ W^{{\cal D}[\bbbx]}_{\lambda(i)}
\ \mbox{ is infinite and included in }
{W^{{\cal C}[\bbbx]}_i}\cap Z$$
{\bf ii.}
A set ${\cal Z}\subseteq\bbbx\times P(\bbbn)$ is
constructively $({\cal C},{\cal D})$-dense if it is
$({\cal C}[\bbbx],{\cal D}[\bbbx])$-dense in the sense of
1ii above and, moreover, an ${\cal D}$-code
for ${\cal Y}$ can be recursively obtained from a ${\cal C}$-code
for ${\cal X}$.
In other words, there exists some total recursive function
$\lambda:\bbbn\to\bbbn$ such that, for all $i$
\medskip\medskip\\
\indent$({\cal W}^A_i)^{{\cal C}[\bbbx\times P(\bbbn)]}\
\mbox{ is infinite }$\medskip

\hfill{$\Rightarrow\
({\cal W}^A_{\lambda(i)})^{{\cal D}[\bbbx\times P(\bbbn)]}
\ \mbox{ is infinite and included in }
({\cal W}^A_i)^{{\cal C}[\bbbx\times P(\bbbn)]}\cap {\cal Z}^A)$}
\end{definition}
\begin{note}$\\ $
{\bf 1.} Clearly, (constructive)
$(\Sigma^0_1,\Sigma^0_1)$-density
is exactly (constructive) $\Sigma^0_1$-density in the sense
of Def.\ref{def:SigmaDense}.
\medskip\\
{\bf 2.}
See Lemmas \ref{l:ggKmax}, \ref{l:ggKmin} for examples of
constructive
$(\Sigma^0_1,\exists^{\leq\mu}(\Sigma^0_1\wedge\Pi^0_1))$-density
and
$(\Pi^0_1,\exists^{\leq\mu}(\Sigma^0_1\wedge\Pi^0_1))$-density.
\end{note}
Let's state a simple result about $({\cal C},{\cal D})$-density.
\begin{proposition}\label{p:intersection}$\\ $
{\bf 1.}
The family of (constructively) $({\cal C},{\cal D})$-dense
subsets of $\bbbx$ (resp. $\bbbx\times P(\bbbn)$)
is superset closed.
\medskip\\
{\bf 2.}
Let $Z_1,Z_2\subseteq\bbbx$.
If $Z_1$ is (constructively) $({\cal C},{\cal D})$-dense
and $Z_2$ is (constructively) $({\cal D},{\cal E})$-dense
then $Z_1\cap Z_2$ is (constructively)
$({\cal C},{\cal E})$-dense.
\\
Idem for ${\cal Z}_1,{\cal Z}_2\subseteq\bbbx\times P(\bbbn)$.

\end{proposition}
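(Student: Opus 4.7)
The plan is to unwind Definition \ref{def:DEDense} directly; neither point requires anything beyond bookkeeping on codes, so I expect no substantive obstacle.

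For \textbf{Part 1}, suppose $Z \subseteq Z'$ and $Z$ is $({\cal C},{\cal D})$-dense. For any infinite $X \in {\cal C}[\bbbx]$, density of $Z$ furnishes an infinite $Y \in {\cal D}[\bbbx]$ with $Y \subseteq X \cap Z$; by monotonicity of intersection $Y \subseteq X \cap Z'$, so $Z'$ is $({\cal C},{\cal D})$-dense. In the constructive case the \emph{same} total recursive $\lambda$ witnessing density of $Z$ works verbatim for $Z'$. The functional version over $\bbbx \times P(\bbbn)$ is identical, the oracle $A$ being carried as an inert parameter throughout.

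For \textbf{Part 2}, the plan is to chain the two witnesses. Given an infinite $X \in {\cal C}[\bbbx]$, the $({\cal C},{\cal D})$-density of $Z_1$ yields an infinite $Y_1 \in {\cal D}[\bbbx]$ with $Y_1 \subseteq X \cap Z_1$. Since $Y_1$ is itself an infinite set in ${\cal D}[\bbbx]$, the $({\cal D},{\cal E})$-density of $Z_2$ applied to $Y_1$ yields an infinite $Y_2 \in {\cal E}[\bbbx]$ with $Y_2 \subseteq Y_1 \cap Z_2$; hence $Y_2 \subseteq X \cap (Z_1 \cap Z_2)$, as required. Constructively, if $\lambda_1$ and $\lambda_2$ are the total recursive functions supplied by Definition \ref{def:DEDense}.2.i for $Z_1$ and $Z_2$ respectively (relative to the fixed acceptable enumerations of ${\cal C}[\bbbx]$, ${\cal D}[\bbbx]$, ${\cal E}[\bbbx]$), then $\lambda_2 \circ \lambda_1$ is total recursive and witnesses the $({\cal C},{\cal E})$-density of $Z_1 \cap Z_2$.

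The only point that deserves attention is the functional version of Part 2. Here $Y_1$ depends on the oracle $A$ through $Y_1 = {\cal Y}_1^A$, so one must check that the ${\cal D}$-code produced from a ${\cal C}$-code of ${\cal X}$ is \emph{uniform} in $A$. But this is exactly what Definition \ref{def:DEDense}.2.ii demands: $\lambda_1$ is a recursive function of codes alone, producing a single index $\lambda_1(i)$ whose $A$-section is an infinite subset of ${\cal X}^A \cap {\cal Z}_1^A$ simultaneously for every oracle $A$. The composition $\lambda_2 \circ \lambda_1$ therefore furnishes the required uniform witness without further work, which confirms that the argument carries over from the plain setting to the functional one.
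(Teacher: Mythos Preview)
Your proof is correct and follows essentially the same approach as the paper: Point 1 is dismissed as obvious, and Point 2 chains the two density witnesses (your $Y_1,Y_2$ are the paper's $X_1,X_2$), with the constructive case handled by composing the two recursive code-transformers. Your treatment of the functional case is slightly more explicit about uniformity in $A$ than the paper's one-line remark, but the argument is the same.
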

\begin{proof}
Point 1 is obvious. As for point 2, let $X$ be an infinite set
in ${\cal C}[\bbbx]$.
Using $({\cal C},{\cal D})$-density of $Z_1$ we (recursively)
get (a code for) an infinite $X_1\subseteq X\cap Z_1$
in ${\cal D}$.
Then, using $({\cal D},{\cal E})$-density of $Z_2$,
we (recursively) get (a code for) an infinite
$X_2\subseteq X_1\cap Z_2\subseteq X\cap (Z_1\cap Z_2)$
in ${\cal E}$.
\\
For ${\cal Z}_1,{\cal Z}_2,{\cal X}\subseteq\bbbx\times P(\bbbn)$,
fix the second order argument $A$ and argue similarly with
${\cal Z}_1^A,{\cal Z}_2^A,{\cal X}^A$.
\end{proof}
%
%
\section{The $\often[]\ \! $ relations
      and the $\lless[]\ \!$ orderings}
\label{s:ll}
%
%
In this \S\ we introduce the central notions of this paper to
compare the growth of total functions $f,g:\bbbn\to\bbbn$.
%
%
\subsection{Relations {$\often[{\cal C},{\cal D}] {\cal F}$},
{$\often[{\cal C},{\cal D}] {{\cal F}\uparrow}$}
on maps $\bbbn\to\bbbn$}

\begin{definition}\label{def:oftless}
Let ${\cal C},{\cal D}$ be syntactical classes
(cf. Def.\ref{def:Dclass}) and ${\cal F}$ be a countable family of
functions $\bbbn\to\bbbn$ and $(\phi_i)_{i\in\bbbn}$ be a
(non necessarily injective) enumeration of ${\cal F}$
(in \S\ref{s:barzdins}, ${\cal F}$ will be $PR$ or $\minpr[]$,
cf. Def.\ref{def:PRminPRmax}).
\medskip\\
We let $\ f\often[{\cal C},{\cal D}] {\cal F} g\ $
(resp. $\ f\often[{\cal C},{\cal D}] {{\cal F}\uparrow} g\ $)
be the relation
between total functions $f,g:\bbbn\to\bbbn$
defined by the following conditions:
\begin{enumerate}
\item[i.]
For every total (resp. and monotone increasing) function
$\phi:\bbbn\to\bbbn$ in ${\cal F}$ which tends to $+\infty$,
the set $\ \{x : f(x)<\phi(g(x))\}\ $\ is
constructively $({\cal C},{\cal D})$-dense.
\item[ii.]
The constructive $({\cal C},{\cal D})$-density in
condition i is uniform in $\phi$ :
There exists some total recursive
$\lambda:\bbbn^2\to\bbbn$ such that, for all $i,j$,
\medskip\\
$\phi_i$ is total (resp. and monotone increasing)
and tends to $+\infty$
\\
$\wedge\ W^{\cal C}_j$ is infinite

\hfill{$\Rightarrow\ W^{\cal D}_{\lambda(i,j)}$
is an infinite subset of
$W^{\cal C}_j\cap\{x : f(x)<\phi_i(g(x))\}$}
\end{enumerate}
\end{definition}
\begin{remark}$\\ $
{\bf 1.}
The notation $\often[]\ $ stresses the fact that $f$ is
often much smaller than $g$ :
consider functions $\phi$ which are much smaller than
the identity function,
e.g. $\max(0,z-c)$, $\lfloor z/c\rfloor$,
$\lfloor\log(z)\rfloor$, $\log^*(z)$,\ldots
\medskip\\
{\bf 2.}
$\often[{\cal C},{\cal D}] {\cal F}$ carries the contents,
reformulated in terms of uniform constructive
$({\cal C},{\cal D})$-density, of Barzdins result cited above,
and that of adequate variants that we shall prove about
$\kmax[]$ and $\kmin[]$ (cf. Lemmas \ref{l:ggK},
\ref{l:ggKmax}, \ref{l:ggKmin}).
\medskip\\
{\bf 3.}
Suppose ${\cal F}$ contains all translation functions
$z\mapsto\max(0,z-c)$.
If $f\often[{\cal C},{\cal D}]{{\cal F}\uparrow}\ g$
(a fortiori if $f\often[{\cal C},{\cal D}]{\cal F}\ g$)
then $g$ is necessarily unbounded.
Else, if $c$ is a bound for $g$, consider $\phi(z)=\max(0,z-c)$
to get a contradiction.
\medskip\\
{\bf 4.}
$\often[{\cal C},{\cal D}]{{\cal F}\uparrow}$ is an extension of
$\often[{\cal C},{\cal D}]{\cal F}$ which has much better properties
(cf. Thm.\ref{thm:transitivity}).
\end{remark}
%
%
\subsection{Monotonicity versus recursive lower bound}

In case ${\cal F}=PR$, the monotonicity condition can be put
in another equivalent form.
\begin{proposition}
Relation $f \often[{\cal C},{\cal D}] {PR\uparrow} g$ holds
if and only if conditions i, ii in Def.\ref{def:oftless} hold for
every total functions $\phi,\phi_i:\bbbn\to\bbbn$ which recursively
tend to $+\infty$, i.e. there are recursive growth modulus
$\xi,\xi_i:\bbbn\to\bbbn$ such that
$$\forall N\ \forall n\geq \xi(N)\ \phi(n)\geq N
\ \ ,\ \
\forall N\ \forall n\geq \xi_i(N)\ \phi_i(n)\geq N$$
\end{proposition}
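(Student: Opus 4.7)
My plan is to establish the equivalence by suitably transforming each type of function into the other while preserving the set $\{x:f(x)<\phi(g(x))\}$ up to inclusion.

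The implication from right to left is straightforward. Any total monotone non-decreasing $\phi_i\in PR$ which tends to $+\infty$ automatically recursively tends to $+\infty$: setting $\xi_i(N)=\mu n[\phi_i(n)\geq N]$ yields a total recursive modulus, and the s-m-n theorem produces an index for $\xi_i$ recursively from $i$. Hence if $\lambda'$ witnesses the modified condition ii (with uniformity in the pair of indices for $\phi_i$ and $\xi_i$), then $\lambda(i,j)=\lambda'(i,s(i),j)$ witnesses the original condition ii. Condition i then follows from condition ii applied to the enumerated $\phi_i$ that equals $\phi$.

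For the converse, the main idea is to build, from indices for any total $\phi_i\in PR$ recursively tending to $+\infty$ with modulus $\xi_i$, an index for a total, monotone non-decreasing $\tilde\phi\in PR$ which tends to $+\infty$ and satisfies $\tilde\phi(z)\leq\phi_i(z)$ for all $z$. The natural candidate is
\[
\tilde\phi(z)\ =\ \max\{N\leq z : \xi_i(N)\leq z\}\qquad(\max\emptyset=0).
\]
Routine checks show that $\tilde\phi$ is total recursive (only $\xi_i(0),\dots,\xi_i(z)$ are inspected), monotone non-decreasing (the admissible $N$'s grow with $z$), tends to $+\infty$ (once $z\geq\max(N,\xi_i(N))$ the value $N$ is admissible), and is pointwise dominated by $\phi_i$ (the modulus property forces $\phi_i(z)\geq\tilde\phi(z)$). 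An index $i^*$ for $\tilde\phi$ is obtained from indices $i$ and $k$ (for $\phi_i$ and $\xi_i$) by s-m-n.

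Applying the original recursive witness $\lambda$ to $(i^*,j)$ produces an infinite set $W^{\cal D}_{\lambda(i^*,j)}$ contained in $W^{\cal C}_j\cap\{x:f(x)<\tilde\phi(g(x))\}$; since $\tilde\phi\leq\phi_i$ pointwise, this set is also contained in $W^{\cal C}_j\cap\{x:f(x)<\phi_i(g(x))\}$, so $\lambda'(i,k,j)=\lambda(i^*,j)$ is the required recursive function. The main delicate point, which I expect to be the main obstacle, is the correct formalization of uniformity in the modified statement: the argument requires $\lambda'$ to depend on both $i$ and an index $k$ for a modulus. This dependence is in fact forced, since from $i$ alone one cannot effectively extract a monotone lower bound of $\phi_i$ which still tends to $+\infty$ --- the natural candidate $z\mapsto\min_{y\geq z}\phi_i(y)$ being only $\Pi^0_2$-definable in general.
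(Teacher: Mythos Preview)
Your argument is correct and follows the same two-step strategy as the paper: show that every total monotone increasing recursive function tending to $+\infty$ has a recursive growth modulus (so the ``recursively tending'' condition subsumes the monotone one), and conversely that every total recursive function which recursively tends to $+\infty$ admits a total recursive monotone minorant which still tends to $+\infty$ (so the monotone condition, applied to the minorant, yields the condition for the original function via the inclusion $\{x:f(x)<\tilde\phi(g(x))\}\subseteq\{x:f(x)<\phi_i(g(x))\}$).

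The only real difference is the specific minorant built. The paper defines its minorant by the recursion $\psi(0)=\phi(0)$, $\psi(N+1)=\phi(\xi(1+\psi(N)))$, whereas you take $\tilde\phi(z)=\max\{N\leq z:\xi_i(N)\leq z\}$. Your construction is more transparent: monotonicity, totality, unboundedness, and the pointwise inequality $\tilde\phi\leq\phi_i$ are all immediate, and the s-m-n step giving an index $i^*$ from $(i,k)$ is explicit. You also make a point the paper glosses over, namely that the uniform witness $\lambda'$ in the modified formulation must take as input not only an index for $\phi_i$ but also an index for a growth modulus $\xi_i$; this is indeed necessary, and your remark that no effective extraction of a monotone unbounded lower bound is possible from an index for $\phi_i$ alone is the right justification.
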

\begin{proof}
$\Rightarrow.$ If $\phi$ is total recursive and monotone increasing
and tends to $+\infty$ then it tends recursively to $+\infty$ : a
possible recursive growth modulus is
$$\xi(N)=\mbox{least $x$ such that $\phi(x)\geq N$}$$
$\Leftarrow.$ Observe that any total $\phi\in PR$ which tends
recursively to $+\infty$ has a total recursive minorant $\psi$
which also tends to $+\infty$, namely
$$\psi(0)=\varphi(0)\ \ ,\ \ \psi(N+1)=\varphi(\xi(1+\psi(N)))$$
where $\xi$ is a recursive growth modulus of $\varphi$.
\\
Of course, if true for $\psi$, conditions i, ii are also true for
$\phi$.
\end{proof}
%
\subsection{Transitivity} \label{sub:transitivity}
%
It is clear that if ${\cal C}\neq{\cal D}$ then
$\often[{\cal C},{\cal D}] {\cal F}$ and
$\often[{\cal C},{\cal D}]{{\cal F}\uparrow}$
may not be transitive, hence may not be orderings.
However, we have the following result.
\begin{theorem}[Transitivity theorem]
\label{thm:transitivity}$\\ $
{\bf 1.}
Let ${\cal B},{\cal C},{\cal D}$ be syntactical classes and
${\cal F},{\cal G}$ be countable classes of functions containing
the identity function $Id:\bbbn\to\bbbn$.
Then,
\medskip\medskip\\
$\begin{array}{cccccccccc}
i.&e&\often[{\cal B},{\cal C}] {\cal F}&f
&\often[{\cal C},{\cal D}] {\cal G}&g
&&&\Longrightarrow &
\begin{array}{rcl}
e & \often[{\cal B},{\cal D}] {\cal G} & g
\end{array}
\medskip\\
ii.&e&\often[{\cal B},{\cal C}]{{\cal F}\uparrow}&f
&\often[{\cal C},{\cal D}]{{\cal G}\uparrow}&g
&&&\Longrightarrow &\left\{
\begin{array}{rcl}
e & \often[{\cal B},{\cal D}]{{\cal G}\uparrow} & g
\medskip\\
e&\often[{\cal B},{\cal D}]{{\cal F}\uparrow}&g
\end{array}\right.
\end{array}$
\medskip\\
In case ${\cal F}$ is recursively closed by negative translation
of the output, i.e.
\\\centerline{$\phi\in{\cal F}\ \Rightarrow\
\forall c\ \max(0,\phi-c)\in{\cal F}$}
and there exists a total recursive function $\theta:\bbbn^2\to\bbbn$
such that
\\\centerline{$\max(0,\phi_i-c)=\phi_{\theta(i,c)}$}
then
\medskip\\
$\begin{array}{cccccccccccc}
iii.&e & \leqct & f & \often[{\cal C},{\cal D}] {\cal F}&g
&&& \Longrightarrow & e & \often[{\cal C},{\cal D}] {\cal F}& g
\end{array}$
\medskip\\
In case ${\cal F}$ is recursively closed by negative translation
of the output and also by negative translation of the input, i.e.
\\\centerline{$\phi\in{\cal F}\ \Rightarrow\
\forall c\ x\mapsto \phi(\max(0,x-c))\in{\cal F}$}
and there exists a total recursive function $\zeta:\bbbn^2\to\bbbn$
such that, for all $x$,
\\\centerline{$\phi_i(\max(0,x-c))=\phi_{\zeta(i,c)}(x)$}
then
\medskip\medskip\\
$\begin{array}{cccccccccccc}
iv.&e&\leqct&f&\often[{\cal C},{\cal D}]{{\cal F}\uparrow}
& g & \leqct & h
&\Longrightarrow&e&\often[{\cal C},{\cal D}]{{\cal F}\uparrow}&h
\end{array}$
\medskip\medskip\\
{\bf 2.}
{\em Case ${\cal C}={\cal D}$.}
Relations $\ \often[{\cal C},{\cal C}] {\cal F}$ and
$\often[{\cal C},{\cal C}]{{\cal F}\uparrow}$
are strict orderings.
\end{theorem}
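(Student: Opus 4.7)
The plan is to derive parts (i)--(iv) by composing the two constructive-density conditions supplied by the hypotheses through a common ``middle'' function, and then to derive Part 2 by combining the transitivity given by (i) and (ii) with a direct irreflexivity argument. The recurring trick for (i) and (ii) is that $Id$ belongs to both ${\cal F}$ and ${\cal G}$ by hypothesis, so it serves as the ``glue'' linking two density statements across a change of function class; the recurring trick for (iii) and (iv) is that the closure assumptions on ${\cal F}$ allow one to pre-compose $\phi$ with the translations arising from the $\leqct$-constants while staying inside ${\cal F}$.

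For (i), fix $\phi\in{\cal G}$ tending to $+\infty$ and an infinite $X\in{\cal B}[\bbbn]$: apply $e\often[{\cal B},{\cal C}]{\cal F}f$ with $Id\in{\cal F}$ to get an infinite $X'\in{\cal C}$ with $X'\subseteq X\cap\{x:e(x)<f(x)\}$, then apply $f\often[{\cal C},{\cal D}]{\cal G}g$ to $\phi$ and $X'$ to get an infinite $Y\in{\cal D}$ with $Y\subseteq X'\cap\{x:f(x)<\phi(g(x))\}$; on $Y$, $e(x)<f(x)<\phi(g(x))$. Part (ii) is the same argument with all $\phi$'s invoked being monotone ($Id$ is monotone), yielding $e\often[{\cal B},{\cal D}]{{\cal G}\uparrow}g$; the other conclusion $e\often[{\cal B},{\cal D}]{{\cal F}\uparrow}g$ follows by swapping the roles of $\phi$ and $Id$: feeding a monotone $\phi\in{\cal F}$ into $e\often f$ and $Id\in{\cal G}$ into $f\often g$, on the resulting $Y$ one has $e(x)<\phi(f(x))$ and $f(x)<g(x)$, so monotonicity of $\phi$ gives $\phi(f(x))\leq\phi(g(x))$ and hence $e(x)<\phi(g(x))$.

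For (iii), pick $c\in\bbbn$ with $e\leq f+c$ and, given $\phi\in{\cal F}$ tending to $+\infty$, apply the hypothesis not to $\phi$ but to $\phi'=\max(0,\phi-c)\in{\cal F}$ (which still tends to $+\infty$): on the resulting $Y$, $f(x)+c<\phi(g(x))$, whence $e(x)\leq f(x)+c<\phi(g(x))$. For (iv), pick $c_1,c_2\in\bbbn$ with $e\leq f+c_1$ and $g\leq h+c_2$, and, given a monotone $\phi\in{\cal F}$, apply the hypothesis to
\[
\phi'(y)=\max\bigl(0,\phi(\max(0,y-c_2))-c_1\bigr),
\]
which lies in ${\cal F}$ by the two stated closure assumptions, is monotone increasing, and tends to $+\infty$; the resulting $Y$ satisfies $f(x)+c_1<\phi(\max(0,g(x)-c_2))$, and since $g(x)-c_2\leq h(x)$ and $\phi$ is monotone, $\phi(\max(0,g(x)-c_2))\leq\phi(h(x))$, so $e(x)\leq f(x)+c_1<\phi(h(x))$ on $Y$.

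For Part 2, transitivity of $\often[{\cal C},{\cal C}]{\cal F}$ and of $\often[{\cal C},{\cal C}]{{\cal F}\uparrow}$ is immediate from (i) and (ii) specialised to ${\cal B}={\cal C}={\cal D}$; irreflexivity is obtained by testing the hypothesis with $\phi=Id\in{\cal F}$ and the infinite set $\bbbn\in{\cal C}[\bbbn]$, since $f\often f$ would require an infinite $Y\in{\cal C}$ included in $\{x:f(x)<f(x)\}=\emptyset$, which is absurd; transitivity plus irreflexivity yields asymmetry, so both relations are strict orderings. The main (and really only) obstacle throughout is the uniformity bookkeeping --- assembling the recursive $\lambda$ for the composite relation from those of the hypotheses, by the parametrization (s-m-n) theorem applied to the acceptable enumerations of ${\cal F}$, ${\cal C}$, ${\cal D}$ and to the fixed recursive indices of $Id$, $\theta$ and $\zeta$ --- which, although routine, is the one nontrivial thread to track.
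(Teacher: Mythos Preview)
Your proof is correct and follows essentially the same approach as the paper's own proof. The paper phrases (i) and (ii) via the inclusion $\{x:e(x)<f(x)\}\cap\{x:f(x)<\phi(g(x))\}\subseteq\{x:e(x)<\phi(g(x))\}$ together with Prop.~\ref{p:intersection} on intersections of dense sets, whereas you unfold that same argument step by step; for (iii), (iv) and Part~2 your translations $\phi'$ and your irreflexivity argument via $\phi=Id$ match the paper's exactly.
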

\begin{proof}
{\em 1i.} Suppose
$e\often[{\cal B},{\cal C}] {\cal F} f
\often[{\cal C},{\cal D}] {\cal G}g$.
Observe that
$$\{x : e(x)<f(x)\}\ \cap\ \{x : f(x)<\phi(g(x))\}\
\subseteq\ \{x : e(x)<\phi(g(x))\}$$
Since $Id\in{\cal F}$, the sets on the left are respectively
constructively $({\cal B},{\cal C})$-dense and
$({\cal C},{\cal D})$-dense, uniformly in $\phi$.
Applying Prop.\ref{p:intersection}, we see that
$\{x : e(x)<\phi(g(x))\}$ is constructively
$({\cal B},{\cal D})$-dense,
whence $e\often[{\cal B},{\cal D}] {\cal G} g$.
\medskip\\
{\em 1ii.}
The above argument also gives
$e\often[{\cal B},{\cal D}]{{\cal G}\uparrow} g$.
To get $e\often[{\cal B},{\cal D}]{{\cal F}\uparrow} g$,
argue as above and observe that
$$\{x : e(x)<\phi(f(x))\}\ \cap\ \{x : f(x)<g(x)\}\
\subseteq\ \{x : e(x)<\phi(g(x))\}$$
whenever $\phi$ is monotone increasing.
\medskip\\
{\em 1iii.} Let $c$ be such that $e(x)\leq f(x)+c$
for all $x$.
\\
If $\phi\in{\cal F}$ is total and tends to $+\infty$, so is its
negative output translation
$$\widehat{\phi}_c(z)=\max\{0,\phi(z)-c\}$$
Suppose $f(x)<\widehat{\phi}_c(g(x))$.
Then $\widehat{\phi}_c(g(x))>0$ so that
\medskip\\\centerline{$\begin{array}{rclcl}
&&\widehat{\phi}_c(g(x))&=&\phi(g(x))-c\\
&&f(x)&<&\phi(g(x))-c\\
e(x)&\leq&f(x)+c&<&\phi(g(x))
\end{array}$}
This proves the following inclusion
\begin{eqnarray}\label{inclusion0}
\{x : f(x)<\widehat{\phi}_c(g(x))\}
                 \subseteq \{x : e(x)<\phi(g(x)\}
\end{eqnarray}
Relation $f\often[{\cal C},{\cal D}] {\cal F}g$ insures that
$\{x : f(x)<\widehat{\phi}_c(g(x))\}$ is constructively
$({\cal C},{\cal D})$-dense.
Inclusion (\ref{inclusion0}) implies that the same is true with
$\{x : e(x)<\phi(g(x))\}$.
Since a code for $\widehat{\phi}_c$ is recursively obtained from
a code for $\phi$, this proves
$e\often[{\cal C},{\cal D}] {\cal F}g$.
\medskip\\
{\em 1iv.} Let $c$ be now such that $e(x)\leq f(x)+c$ and
$g(x)\leq h(x)+c$ for all $x$.
\\
If $\phi\in{\cal F}$ is total, monotone increasing
and tends to $+\infty$, so is its negative input and output
translation
$$\widetilde{\phi}_c(z)=\max\{0,\phi(\max(0,z-c))-c\}$$
Suppose $f(x)<\widetilde{\phi}_c(g(x))$.
Then $\widetilde{\phi}_c(g(x))>0$ so that
\medskip\\\centerline{$\begin{array}{rclcl}
&&\widetilde{\phi}_c(g(x))&=&\phi(\max(0,g(x)-c)-c\\
&&f(x)&<&\phi(\max(0,g(x)-c)-c\\
e(x)&\leq&f(x)+c&<&\phi(\max(0,g(x)-c)
\end{array}$}
\medskip\\
Now, $g(x)\leq h(x)+c$ and $\phi$ is monotone increasing, hence
$$e(x)<\phi(\max(0,g(x)-c)\leq\phi(\max(0,h(x))=\phi(h(x))$$
This proves inclusion
\begin{eqnarray}\label{inclusion00}
\{x : f(x)<\widetilde{\phi}_c(g(x))\}
                 \subseteq \{x : e(x)<\phi(h(x)\}
\end{eqnarray}
Relation $f\often[{\cal C},{\cal D}] {{\cal F}\uparrow}g$ insures that
$\{x : f(x)<\widetilde{\phi}_c(g(x))\}$ is constructively
$({\cal D},{\cal E})$-dense.
Inclusion (\ref{inclusion00}) implies that the same is true with
$\{x : e(x)<\phi(h(x))\}$.
Since a code for $\widetilde{\phi}_c$ is recursively obtained from
a code for $\phi$, this proves
$e\often[{\cal C},{\cal D}]{{\cal F}\uparrow}h$.
\medskip\\
{\em 2.} Transitivity of $\often[{\cal C},{\cal C}] {\cal F}$ and
$\often[{\cal C},{\cal C}]{{\cal F}\uparrow}$
is an obvious consequence of 1i--ii.
As for irreflexivity, arguing with $\phi=Id$
(which is in ${\cal F}$), we see that $f(x)<\phi(f(x))$ is
impossible, so that $f\often[{\cal C},{\cal C}]{\cal F}f$ and
$f\often[{\cal D},{\cal C}]{{\cal F}\uparrow}f$ are always
false. Thus, $\often[{\cal C},{\cal C}] {\cal F}$ and
$\often[{\cal C},{\cal C}]{{\cal F}\uparrow}$
are {\em strict} orderings.
\end{proof}
%
\subsection{Orderings $\lless[{\cal C},{\cal D}] {\cal F}$,
$\lless[{\cal C},{\cal D}]{{\cal F}\uparrow}$
on maps $\bbbn\to\bbbn$}
\label{ss:ll}

Points iii-iv of the above theorem show that taking intersection
with the ``up to a constant" ordering $\leqct$ transforms the
relations
$\often[{\cal C},{\cal D}] {\cal F}$ and
$\often[{\cal C},{\cal D}]{{\cal F}\uparrow}$ into strict orderings
$\lless[{\cal C},{\cal D}] {\cal F}$ and
$\lless[{\cal C},{\cal D}]{{\cal F}\uparrow}$.
\begin{definition}\label{def:ll}
$\lless[{\cal C},{\cal D}] {\cal F}$ and
$\lless[{\cal C},{\cal D}]{{\cal F}\uparrow}$ are the intersections
of the $\often[{\cal C},{\cal D}] {\cal F}$ and
$\often[{\cal C},{\cal D}]{{\cal F}\uparrow}$ relations
with the $\leqct$ ordering on total maps $\bbbn\to\bbbn$.
\end{definition}

\begin{theorem}[{$\lless[{\cal C},{\cal D}] {\cal F}$} and
{$\lless[{\cal C},{\cal D}]{{\cal F}\uparrow}$} are strict
orderings]\label{thm:order}$\\ $
Let ${\cal A},{\cal B},{\cal C},{\cal D}$ be syntactical
classes and let ${\cal F},{\cal G}$ be countable classes of
functions $\bbbn\to\bbbn$ which contain $Id$ and which are
recursively closed by output and input translation
(cf. Thm.\ref{thm:transitivity}) relative to some enumerations of
${\cal F},{\cal G}$.
Then
\medskip\medskip\\
\indent$\begin{array}{lccccccccr}
i.&e &\lless[{\cal B},{\cal C}] {\cal F}&
f&\lless[{\cal C},{\cal D}] {\cal G}& g
&&& \Rightarrow &
\begin{array}{lcr}
e& \lless[{\cal B},{\cal D}] {\cal G}& g
\end{array}
\medskip\\
ii.&e &\lless[{\cal B},{\cal C}] {{\cal F}\uparrow}&
f&\lless[{\cal C},{\cal D}] {{\cal G}\uparrow}& g
&&& \Rightarrow &
\left\{
\begin{array}{lcr}
e& \lless[{\cal B},{\cal D}] {{\cal F}\uparrow}& g
\medskip\\
e& \lless[{\cal B},{\cal D}] {{\cal G}\uparrow}& g
\end{array}\right.
\end{array}$
\medskip\\
\indent$\begin{array}{lccccccccccc}
iii.&e& \leqct& f &\lless[{\cal C},{\cal D}] {\cal G}& g
&&& \Rightarrow & e &\lless[{\cal C},{\cal D}] {\cal G}& g
\medskip\\
iv.&e&\leqct&f&\lless[{\cal C},{\cal D}] {{\cal G}\uparrow}
& g& \leqct& h
&\Rightarrow&e&\lless[{\cal C},{\cal D}] {{\cal G}\uparrow}&h
\end{array}$
\medskip\medskip\\
In particular, properties iii and iv can be applied with $\leqct$
replaced by $\lless[{\cal A},{\cal B}] {\cal F}$ or
$\lless[{\cal A},{\cal B}] {{\cal F}\uparrow}$.
\medskip\\
Relations $\lless[{\cal C},{\cal D}] {\cal F}$ and
$\lless[{\cal C},{\cal D}] {{\cal F}\uparrow}$ are strict orderings
such that
\medskip\medskip\\
\indent$\begin{array}{lccccc}
v.&f{\lless[{\cal C},{\cal D}] {\cal F}}g &\Rightarrow &
f{\lless[{\cal C},{\cal D}] {{\cal F}\uparrow}}g &
\Rightarrow & f\infct g
\end{array}$\medskip
\end{theorem}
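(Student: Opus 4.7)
The plan is to derive every clause by combining ordinary transitivity of $\leqct$ with the corresponding part of the Transitivity Theorem \ref{thm:transitivity}. By Definition \ref{def:ll}, the relations $\lless[{\cal C},{\cal D}]{\cal F}$ and $\lless[{\cal C},{\cal D}]{{\cal F}\uparrow}$ are the conjunctions $\leqct\ \wedge\ \often[{\cal C},{\cal D}]{\cal F}$ and $\leqct\ \wedge\ \often[{\cal C},{\cal D}]{{\cal F}\uparrow}$, so every hypothesis splits into two conjuncts that I handle independently.

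For (i), $e\leqct f\leqct g$ yields $e\leqct g$ by transitivity of $\leqct$, while $e\often[{\cal B},{\cal C}]{\cal F} f\often[{\cal C},{\cal D}]{\cal G} g$ gives $e\often[{\cal B},{\cal D}]{\cal G} g$ by Thm.\ref{thm:transitivity}.1i, whose hypothesis $Id\in{\cal F},{\cal G}$ is part of the present assumptions. Point (ii) is analogous, invoking both conclusions of Thm.\ref{thm:transitivity}.1ii. For (iii), $e\leqct f\leqct g$ gives $e\leqct g$, and $e\leqct f\often[{\cal C},{\cal D}]{\cal G} g$ gives $e\often[{\cal C},{\cal D}]{\cal G} g$ by Thm.\ref{thm:transitivity}.1iii (closure of ${\cal G}$ under negative output translation is assumed). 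Point (iv) proceeds identically via Thm.\ref{thm:transitivity}.1iv, using the additional input-translation closure. Since $\lless[{\cal A},{\cal B}]{\cal F}$ and $\lless[{\cal A},{\cal B}]{{\cal F}\uparrow}$ both contain $\leqct$ by definition, (iii) and (iv) apply verbatim when $\leqct$ is replaced by either of these finer relations.

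For (v), the first implication is immediate since the defining condition of $\lless[{\cal C},{\cal D}]{\cal F}$ quantifies over all total $\phi\in{\cal F}$ tending to $+\infty$, a fortiori over those which are also monotone increasing. For the second implication, $f\leqct g$ is built into the definition, so it remains to prove $g\not\leqct f$: fix $c\in\bbbn$ and specialise condition i of Def.\ref{def:oftless} to $\phi(z)=\max(0,z-(c+1))$, which is total, monotone increasing, tends to $+\infty$, and lies in ${\cal F}$ as the input-output translate of $Id\in{\cal F}$. Constructive $({\cal C},{\cal D})$-density of $\{x : f(x)<\phi(g(x))\}$, applied to the infinite ${\cal C}$-set $\bbbn$, forces this set to be infinite; any $x$ in it satisfies $\phi(g(x))>0$, whence $g(x)-(c+1)>f(x)$, i.e. $g(x)>f(x)+c$. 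Finally, the strict-ordering status of $\lless[{\cal C},{\cal C}]{\cal F}$ and $\lless[{\cal C},{\cal C}]{{\cal F}\uparrow}$ follows from (i) and (ii) for transitivity and from the $\phi=Id$ irreflexivity argument of Thm.\ref{thm:transitivity}.2. The only non-bookkeeping step is the choice of translation $\phi$ witnessing (v); matching each closure hypothesis on ${\cal F},{\cal G}$ to the right subpart of Thm.\ref{thm:transitivity} is the main organisational care.
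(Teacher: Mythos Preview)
Your proof is correct and follows essentially the same route as the paper: reduce clauses (i)--(iv) to the corresponding parts of Thm.\ref{thm:transitivity} by splitting each $\lless[]\,$ hypothesis into its $\leqct$ and $\often[]\,$ conjuncts, and prove (v) by specialising to a translation $\phi(z)=\max(0,z-c)$ of the identity. One small point: for the strict-ordering claim you wrote $\lless[{\cal C},{\cal C}]{\cal F}$, but the theorem asserts it for general ${\cal C},{\cal D}$; transitivity in that case is most cleanly obtained from (iii) (since $e\lless[{\cal C},{\cal D}]{\cal F} f$ implies $e\leqct f$), not from (i), though the paper's own proof is equally loose on this.
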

\begin{proof}
Conditions i--iv are straightforward consequences of the similar
conditions in Thm.\ref{thm:transitivity}.
\medskip\\
Condition i--ii yields transitivity of
$\lless[{\cal C},{\cal D}] {\cal F}$ and
$\lless[{\cal C},{\cal D}] {{\cal F}\uparrow}$.
\medskip\\
Implication
$f{\lless[{\cal C},{\cal D}] {\cal F}}g\ \Rightarrow\
f{\lless[{\cal C},{\cal D}] {{\cal F}\uparrow}}g$ is trivial.
Let's prove that $\lless[{\cal C},{\cal D}] {\cal F}$ refines
$\infct$ (and not merely $\leqct$).\\
Suppose $f\lless[{\cal C},{\cal D}] {\cal F}g$. Then $f\leqct g$.
Also, letting $\psi(z)=\max(0,z-c)$, we see that
$\{x : f(x)<\psi(g(x)\}=\{x : f(x)<g(x)-c\}$ is infinite,
hence the condition $\ \forall x\ g(x)\leq f(x)+c\ $
is impossible, whatever be $c$.
Thus, $f\infct g$.
\end{proof}
The above theorem shows that composition of the orderings
$\lless[{\cal C},{\cal D}] {\cal F}$ and
$\lless[{\cal C},{\cal D}] {{\cal F}\uparrow}$
is remarkably flexible. In particular,
\begin{corollary}\label{coro:order}
If $1\leq i\leq n$ and $1\leq j<k\leq m\leq n$ then
\medskip\medskip\\\centerline{
$\begin{array}{ccccccccccccc}
f_0&\lless[{\cal D}_1,{\cal E}_1] {{\cal F}_1\uparrow}
& \ldots&\lless[{\cal D}_n,{\cal E}_n] {{\cal F}_n\uparrow}& f_n
&\Rightarrow& f_0&\lless[{\cal D}_i,{\cal E}_i] {{\cal F}_i\uparrow}
& f_n
\medskip\\
f_0 &\lless[{\cal C}_1,{\cal C}_2] {{\cal F}_2\uparrow}
& \ldots&\lless[{\cal C}_{n-1},{\cal C}_n] {{\cal F}_n\uparrow}& f_n
&\Rightarrow&f_0&\lless[{\cal C}_j,{\cal C}_m] {{\cal F}_k\uparrow}& f_n
\end{array}$}
\end{corollary}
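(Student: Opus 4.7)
The plan is to derive both claims as direct consequences of parts (ii), (iv), and (v) of Theorem~\ref{thm:order}. In each case the argument splits into two moves: first, compose or isolate a segment of the given chain to obtain a single $\lless$-step with the desired middle classes and ${\cal F}$; second, extend this step out to the full endpoints $f_0$ and $f_n$ by $\leqct$-padding.

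For the first claim I would isolate the $i$-th link of the chain. Each step $f_{t-1} \lless[{\cal D}_t,{\cal E}_t]{{\cal F}_t\uparrow} f_t$ implies $f_{t-1} \leqct f_t$ by property (v), since $\lless$ refines $\infct$ which refines $\leqct$. Transitivity of $\leqct$ then yields $f_0 \leqct f_{i-1}$ and $f_i \leqct f_n$. Property (iv) applied to the configuration
\[
f_0 \leqct f_{i-1} \lless[{\cal D}_i,{\cal E}_i]{{\cal F}_i\uparrow} f_i \leqct f_n
\]
delivers $f_0 \lless[{\cal D}_i,{\cal E}_i]{{\cal F}_i\uparrow} f_n$ directly. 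No information about the other steps is used beyond weakening each of them to $\leqct$.

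For the second claim the matching-class structure of the chain permits genuine composition via property (ii). I would focus on the subchain whose leftmost class is ${\cal C}_j$ and whose rightmost class is ${\cal C}_m$, and prove by induction on its length that it collapses to a single $\lless[{\cal C}_j,{\cal C}_m]{{\cal F}_k\uparrow}$-step for \emph{any} index $k$ with $j < k \leq m$. The inductive step takes a previously obtained $\lless[{\cal C}_j,{\cal C}_l]{{\cal F}_k\uparrow}$ and the next link $\lless[{\cal C}_l,{\cal C}_{l+1}]{{\cal F}_{l+1}\uparrow}$, and invokes (ii): its double conclusion allows me either to preserve the previous $k$ or to replace it with $l+1$. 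Hence at the end of the induction the full admissible range of $k$ remains available. Once the inner segment has been collapsed to a single step, I pad it with the $\leqct$-prefix $f_0 \leqct \cdots$ and $\leqct$-suffix $\cdots \leqct f_n$ obtained as in the first claim, and apply (iv) once more to extend the relation to $f_0 \lless[{\cal C}_j,{\cal C}_m]{{\cal F}_k\uparrow} f_n$.

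The only point that deserves any real care is the flexibility in the choice of $k$ across compositions for the second claim, and this is exactly what the two conclusions of property (ii) are designed to provide; no genuinely new combinatorial obstacle arises. Everything else reduces to a routine induction plus two invocations of (iv).
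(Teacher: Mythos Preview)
Your proposal is correct and is exactly the kind of spelling-out the paper intends: the paper offers no explicit proof beyond ``the above theorem shows that composition \ldots\ is remarkably flexible,'' so your derivation from parts (ii), (iv), (v) of Theorem~\ref{thm:order} matches the paper's approach. One minor remark: you need not route $\lless[]\uparrow \Rightarrow \leqct$ through (v) and $\infct$, since $\lless[]\uparrow$ is by Definition~\ref{def:ll} the intersection of $\often[]\uparrow$ with $\leqct$ and hence implies $\leqct$ immediately.
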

%
%
\subsection{Left composition and
$\lless[{\cal C},{\cal D}] {\cal F}$}
\label{ss:variant}

Def.\ref{def:oftless}, \ref{def:ll} compare total functions
$f,g:\bbbn\to\bbbn$ via the associated sets
$\{x:f(x)<\phi(g(x))\}$ for $\phi\in{\cal F}$.
One could also compare $f,g$ via the sets
$\{x:\phi(f(x))<g(x)\}$ for $\phi\in{\cal F}$.
Similar properties could be derived.
\medskip\\
Though we shall not use it in the sequel, there is a property
of $\lless[{\cal C},{\cal D}] {\cal F}$ and
$\lless[{\cal C},{\cal D}] {{\cal F}\uparrow}$ which is
interesting on its own and gives an alternative definition of
$\lless[{\cal C},{\cal D}] {\cal F}$ and
$\lless[{\cal C},{\cal D}] {{\cal F}\uparrow}$ where the
inequality $f(x)<\phi(g(x))$ gets a symmetric form
$\psi(f(x))<\phi(g(x))$ involving functions
$\psi,\phi$ on both sides of the inequality.
We prove it in case ${\cal F}$ is $PR$ or $\minpr[]$.
\begin{proposition}\label{p:variant}
Let ${\cal C},{\cal D}$ be syntactical classes and
${\cal F}$ be $PR$ or $\minpr[]$.
Let $\psi:\bbbn\to\bbbn$ be a total recursive function. Then
\medskip\medskip\\\medskip\centerline{
$\begin{array}{ccccccc}
f&\lless[{\cal C},{\cal D}] {\cal F}&g &\Rightarrow&
\psi\circ f&\lless[{\cal C},{\cal D}] {\cal F}&g
\medskip\medskip\\
f&\lless[{\cal C},{\cal D}] {{\cal F}\uparrow}& g &\Rightarrow&
\psi\circ f&\lless[{\cal C},{\cal D}] {{\cal F}\uparrow}&g
\end{array}$}
Moreover, the constructive density afferent to
the relations $\psi\circ f\lless[{\cal C},{\cal D}] {\cal F}g$
and $\psi\circ f\lless[{\cal C},{\cal D}] {{\cal F}\uparrow}g$
is uniform in $\psi$.
\end{proposition}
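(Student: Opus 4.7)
The strategy is to reduce the statement to the hypothesis by associating, to each candidate test function $\phi\in{\cal F}$ (total, tending to $+\infty$, and monotone increasing in the $\uparrow$-case), an auxiliary function $\phi'\in{\cal F}$ with the same properties, such that
$$\{x : f(x)<\phi'(g(x))\}\ \subseteq\ \{x : \psi(f(x))<\phi(g(x))\}.$$
Given such a $\phi'$, the hypothesis $f\often[{\cal C},{\cal D}]{\cal F}g$ (resp.\ $\often[{\cal C},{\cal D}]{{\cal F}\uparrow}$) yields constructive $({\cal C},{\cal D})$-density of the left-hand set, hence of the right-hand set by superset closure (Prop.\ \ref{p:intersection}, point 1). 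Uniformity in $\phi$ together with a recursive index transformation $(\phi,\psi)\mapsto\phi'$ will then give uniformity in $\psi$, which is the ``Moreover'' clause.

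The concrete choice is
$$\phi'(z)\ =\ \min\bigl(\,z+1,\ \mu y.[\psi(y)\geq\phi(z)]\,\bigr),$$
where $\mu y.[P(y)]$ denotes the least $y\in\bbbn$ satisfying $P$, with the convention $+\infty$ if no such $y$ exists. The cut-off at $z+1$ ensures $\phi'(z)\in\bbbn$ even when $\psi$ happens to be bounded below $\phi(z)$. The inclusion above is immediate: if $f(x)<\phi'(g(x))$ then in particular $f(x)<\mu y.[\psi(y)\geq\phi(g(x))]$, so by definition of this least $y$ we have $\psi(f(x))<\phi(g(x))$.

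Next I verify the three required properties of $\phi'$. \emph{Totality and tending to $+\infty$:} by construction $\phi'(z)\leq z+1$; conversely, given $N\in\bbbn$, let $M_N=\max\{\psi(y):y<N\}$; since $\phi\to+\infty$, for $z$ large enough $\phi(z)>M_N$, which forces $\mu y.[\psi(y)\geq\phi(z)]\geq N$, and also $z+1\geq N$, so $\phi'(z)\geq N$. \emph{Monotonicity when $\phi$ is monotone:} both $z\mapsto z+1$ and $z\mapsto\mu y.[\psi(y)\geq\phi(z)]$ are then monotone non-decreasing (the latter because the set $\{y:\psi(y)\geq\phi(z)\}$ shrinks as $z$ grows), and the pointwise minimum of two non-decreasing functions is non-decreasing. \emph{Membership in ${\cal F}$:} for ${\cal F}=PR$ this is immediate. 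For ${\cal F}=\minpr[]$, write $\phi(z)=\min_t\alpha(z,t)$ with $\alpha$ partial recursive; the predicate $\psi(y)\geq\phi(z)$ becomes $\exists t\ \psi(y)\geq\alpha(z,t)$, so define a partial recursive $\beta(z,n)$ by $\beta(z,0)=z+1$ and, on $n=\couple{y}{t}+1$, by $\beta(z,n)=y$ if $\psi(y)\geq\alpha(z,t)$ (undefined otherwise); then $\phi'(z)=\min_n\beta(z,n)\in\minpr[]$.

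Finally, the index transformation $(\phi,\psi)\mapsto\phi'$ is effective in both settings (a total recursive function on indices in $PR$ or in $\minpr[]$), so composing the uniform $\lambda$-function witnessing $f\often[{\cal C},{\cal D}]{\cal F}g$ with this transformation yields a uniform $\lambda$-function witnessing $\psi\circ f\often[{\cal C},{\cal D}]{\cal F}g$, with dependence on $\psi$ built in recursively through the indices; the $\uparrow$-case is identical using the monotonicity verification above. The main delicate point is the definition of $\phi'$: it must simultaneously (i) keep the implication $f(x)<\phi'(g(x))\Rightarrow\psi(f(x))<\phi(g(x))$, (ii) remain total when $\psi$ is bounded, (iii) still tend to $+\infty$, (iv) inherit monotonicity from $\phi$, and (v) stay inside $\minpr[]$ when $\phi$ does. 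The $\min(z+1,\cdot)$ cut-off is exactly the device that reconciles (ii)--(iii) without endangering (i), (iv) or (v); the rest of the argument is then a clean application of the superset-closure half of Prop.\ \ref{p:intersection}.
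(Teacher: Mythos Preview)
Your argument is correct and shares the paper's high-level strategy---construct from each test $\phi\in{\cal F}$ an auxiliary function in ${\cal F}$ witnessing the inclusion $\{x:f(x)<\phi'(g(x))\}\subseteq\{x:\psi(f(x))<\phi(g(x))\}$---but the construction itself is different and more direct. The paper first replaces $\psi$ by its monotone majorant $\psi'(z)=\max(z,\max_{u\leq z}\psi(u))$, then sets $\alpha(z)=\max\{u:\psi'(u)\leq\phi(z)\}$ and takes as auxiliary function $\zeta(z)$, the left endpoint of the $\psi'$-plateau containing $\alpha(z)$. You bypass the majorant entirely with $\phi'(z)=\min(z+1,\mu y.[\psi(y)\geq\phi(z)])$, handling the possibly-bounded-$\psi$ case via the $z+1$ cut-off rather than via $\psi'\geq\mathrm{Id}$. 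Your route is shorter, and the $\minpr[]$-membership check (a single partial recursive $\beta$ with $\phi'=\min_n\beta$) is cleaner than the paper's separate verifications that $\alpha=\min_t\alpha_t$ and $\zeta=\min_t\zeta_t$. One remark applying equally to both write-ups: $\lless[{\cal C},{\cal D}]{\cal F}$ includes $\leqct$ by definition, and neither your proof nor the paper's addresses why $\psi\circ f\leqct g$ should follow from $f\leqct g$ (it need not, for arbitrary total recursive $\psi$); both arguments in fact establish the $\often[{\cal C},{\cal D}]{\cal F}$ implication, which is the intended content of the surrounding discussion.
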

\begin{proof}
1. Let $\phi:\bbbn\to\bbbn$ be a total function in ${\cal F}$
which tend to $+\infty$.
We prove that $\{x : \psi(f(x))<\phi(g(x))\}$ is constructively
$({\cal C},{\cal D})$-dense.
\\
Set $\psi'(z)=\max(z,max\{\psi(u) : u\leq z\})$.
Then $\psi'\geq Id$ is total recursive, monotone increasing and
unbounded.
Since $\psi\leq\psi'$, we have
\begin{eqnarray}\label{cond:phiprime}
\{x:\psi'(f(x))<\phi(g(x))\}&\subseteq&\{x:\psi(f(x))<\phi(g(x))\}
\end{eqnarray}
2. Define $\alpha,\zeta:\bbbn\to\bbbn$ as follows:
\begin{eqnarray*}
\alpha(z) &=& \mbox{largest }u\mbox{ such that }
                           \psi'(u)\leq\phi(z)\\
\zeta(z) &=& \mbox{smallest }s\mbox{ such that }\psi'
\mbox{ is constant on }[s,\alpha(z)]
\end{eqnarray*}
Since $\phi(z)$ and $\psi'(z)$ tend to $+\infty$ so do
$\alpha(z)$ and $\zeta$. Also,
\begin{eqnarray}\label{cond:zeta}
\forall u<\zeta(z)\ \ \psi'(u)<\psi'(\zeta(z))
=\psi'(\alpha(z))\leq\phi(z)
\end{eqnarray}
Finally, $\alpha$ and $\zeta$ are in ${\cal F}$.
If ${\cal F}=PR$, this is trivial.
If ${\cal F}=\minpr[]$ and $\phi(x)=\min_t\phi_t(x)$ then
observe that $\alpha(x)=\min_t\alpha_t(x)$ and
$\zeta(x)=\min_t\zeta_t(x)$ (where $\alpha_t,\zeta_t$ are defined
from $\psi,\phi_t$ as are $\alpha,\zeta$ from $\psi,\phi$).
\medskip\\
3. Condition (\ref{cond:zeta}) applied to $z=g(x), u=f(x)$
insures
\medskip\\\medskip\centerline{$f(x)<\zeta(g(x))\
\Rightarrow\ \psi'(f(x))<\phi(g(x))$}
whence
\begin{eqnarray}\label{cond:inclusion}
\{x : f(x)<\zeta(g(x))\}
          & \subseteq & \{x : \psi'(f(x))<\phi(g(x))\}
\end{eqnarray}
Condition $f\often[{\cal C},{\cal D}]{\cal F} g$ applied
to $\zeta$
insures that $\{x : f(x)<\zeta(g(x))\}$ is constructively
$({\cal C},{\cal D})$-dense.
Using inclusions (\ref{cond:inclusion}) and
(\ref{cond:phiprime}),
we see that so is $\{x : \psi(f(x))<\phi(g(x))\}$.
\medskip\\
4. In case $f\often[{\cal C},{\cal D}]{{\cal F}\uparrow} g$,
then $\phi$ is monotone
increasing. Since $\psi$ is also monotone increasing, so are
$\alpha,\zeta$ and we get
$\psi\circ f\often[{\cal C},{\cal D}]{\cal F} g$.
\medskip\\
5. Finally, observe that all the construction is uniform in
$\psi$ and $\phi$.
\end{proof}
%
%
\section{Functional Kolmogorov complexity}
\label{s:uniform}
%
The purpose of this section is to reconsider the oracular
version of Kolmogorov complexity. We shall view the oracle
as a parameter in a second order variant of conditional
Kolmogorov complexity.
%
\subsection{Kolmogorov complexity of a functional}
\label{ss:Kuniform}
%
\begin{definition}\label{def:KsubFunctional}
Let $\bbbx$ be a basic set.
\\
The Kolmogorov complexity
${\cal K}_F:\bbbx\times P(\bbbn)\to\bbbn$
associated to a partial functional
$F:\words\times P(\bbbn)\to\bbbx$ is defined as follows:
\medskip\\\centerline{$\begin{array}{lcl}
{\cal K}_F(\ttx\,||\,A)
&=&\mbox{smallest $|\ttp|$ such that }(F(\ttp,A)=\ttx)
\end{array}$}
\end{definition}
\begin{note}$\\ $
{\bf 1.}
Forgetting the $A$, we get the
classical notion $K_F(\ttx)$ with $F:\words\to\bbbx$.
Freezing the $A$ also leads to the classical oracular notion.
This is the contents of the next obvious proposition
and of Thm.\ref{thm:unifVersusOracle} below.
\medskip\\
{\bf 2.}
The double bar $||$ is used so as to get no confusion with
usual conditional Kolmogorov complexity where the condition
is a first-order object.
\medskip\\
{\bf 3.}
The above definition can obviously be extended to conditional
Kolmogorov complexity ${\cal K}_F(\ttx\,|\,\tty\,||\,A)$
where $F:\words\times\bbby\times P(\bbbn)\to\bbbx$.
\end{note}
\begin{proposition}\label{p:uniformCond}
Let $F$ be as in Def.\ref{def:KsubFunctional}.
For $A\in P(\bbbn)$, denote
\\\centerline{$F^A:\words\to\bbbx$}
the function such that $F^A(\ttp)=F(\ttp,A)$.
Then, for all $\ttx\in\bbbx$,
$$K_{F^A}(\ttx)={\cal K}_F(\ttx\,||\,A)$$
\end{proposition}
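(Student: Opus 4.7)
The plan is to prove this simply by unfolding the two definitions involved and observing that they coincide pointwise. There is no real obstacle here: the whole point of the notation $F^A$ is to freeze the second-order argument, and the whole point of ${\cal K}_F(\,\cdot\,\|\,A)$ is likewise to measure shortest descriptions with the oracle fixed to $A$.

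More concretely, I would fix $A \subseteq \bbbn$ and $\ttx \in \bbbx$, and write out both sides. By Def.\ref{def:KsubFunctional},
\[
{\cal K}_F(\ttx\,\|\,A) \;=\; \min\{\,|\ttp| : F(\ttp,A) = \ttx\,\}.
\]
On the other hand, the classical (non-second-order) Kolmogorov complexity $K_{F^A}$ associated to the partial function $F^A:\words\to\bbbx$ is, by the version of Def.\ref{def:KsubFunctional} without the second-order parameter,
\[
K_{F^A}(\ttx) \;=\; \min\{\,|\ttp| : F^A(\ttp) = \ttx\,\}.
\]
Since $F^A(\ttp) = F(\ttp,A)$ by the very definition of $F^A$, the two sets $\{\ttp : F^A(\ttp) = \ttx\}$ and $\{\ttp : F(\ttp,A) = \ttx\}$ coincide, and hence so do their minima (with the usual convention that the minimum of the empty set of lengths is $+\infty$, in which case both quantities are $+\infty$).

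The only thing worth mentioning is the degenerate case: if there is no $\ttp$ with $F(\ttp,A)=\ttx$, then both $K_{F^A}(\ttx)$ and ${\cal K}_F(\ttx\,\|\,A)$ are undefined (or $+\infty$) in a consistent manner, so the equality still holds. Thus the statement is essentially a tautology once the notations are parsed, which is precisely why the authors label it a Proposition rather than a Theorem: it is the formal bridge that allows oracular Kolmogorov complexity to be recovered by instantiating the second-order argument of its functional version.
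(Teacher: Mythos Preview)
Your proposal is correct and matches the paper's treatment: the paper states this proposition without proof, calling it ``obvious'' just before its statement, precisely because it is an immediate unfolding of definitions as you have written out.
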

%
\subsection{Functional invariance theorem}
\label{ss:uniformInvariance}
%
The usual proof of the invariance theorem
(Kolmogorov, 1965 \cite{kolmo65})
extends easily when considering partial computable functionals
$\words\times P(\bbbn)\to\bbbn$ in place of partial recursive
functions $\words\to\bbbn$, leading to what we call
{\em functional Kolmogorov complexity} and denote
${\cal K}(\ttx\,||\,A)$.
\begin{theorem}
[Functional Invariance Theorem]\label{thm:uniformInvariance}$\\ $
{\bf 1.}
Let ${\cal F}$ be the family of partial computable functionals
$\words\times P(\bbbn)\to\bbbx$.
When $F$ varies in ${\cal F}$, there is a least $K_F$
up to an additive constant:
$$\exists F\in {\cal F}\ \ \forall G\in {\cal F}\ \ \
{\cal K}_F\leqct {\cal K}_G$$
Such an $F$ is said to be optimal in ${\cal F}$.
We let ${\cal K}(\ ||\ )$ be ${\cal K}_F$ where $F$ is some fixed
optimal functional.
\medskip\\
{\bf 2.}
Let $(F_k)_{k\in\bbbn}$ be a partial computable enumeration of
$PC^{\words\times P(\bbbn)\to\bbbx}$.
Let ${\cal U}:\words\times P(\bbbn)\to\bbbx$ be such that
$${\cal U}(0^k1\ttp,A)=F_k(\ttp,A)\ \ \ \ \ \ \ \
{\cal U}(0^k)=0$$
Then ${\cal U}$ is optimal in $PC^{\words\times P(\bbbn)\to\bbbx}$.
\end{theorem}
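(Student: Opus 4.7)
The plan is to establish Part 2 directly; Part 1 then follows immediately by taking for $(F_k)_{k\in\bbbn}$ the acceptable enumeration furnished by Proposition \ref{p:acceptable} and setting $F = \mathcal{U}$. The argument parallels Kolmogorov's classical invariance proof, the only point requiring attention being that the additive constant produced must depend solely on a program index for $G$, never on the oracle $A$. This is precisely what makes ${\cal K}(\ttx \,||\, A)$ genuinely uniform in $A$, as opposed to what one obtains by merely relativising the classical theorem.

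First I would verify that $\mathcal{U}$ actually lies in $PC^{\words \times P(\bbbn) \to \bbbx}$. By clause i of Definition \ref{def:acceptable} the map $(k, \ttp, A) \mapsto F_k(\ttp, A)$ is realised by a single oracle Turing machine ${\cal M}$. The decoding $\ttq = 0^k 1 \ttp \mapsto (k, \ttp)$ is primitive recursive and makes no oracle queries, so I can build an oracle machine that, on input $\ttq$ with oracle $A$, scans $\ttq$ for the first $1$, reads off $k$ and $\ttp$, and then simulates ${\cal M}$ on $(k,\ttp)$ with the same oracle $A$; on inputs of the form $0^k$ it halts with output $0$. This realises $\mathcal{U}$ as a partial computable functional. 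For optimality, fix any $G \in PC^{\words \times P(\bbbn) \to \bbbx}$. By clause ii of Definition \ref{def:acceptable} there is $k \in \bbbn$ with $G = F_k$. For every $\ttx \in \bbbx$ and every $A \in P(\bbbn)$, if $\ttp$ is a shortest string with $G(\ttp,A)=\ttx$, then
\[
\mathcal{U}(0^k 1 \ttp, A) \;=\; F_k(\ttp,A) \;=\; G(\ttp,A) \;=\; \ttx,
\]
whence ${\cal K}_{\mathcal{U}}(\ttx \,||\, A) \leq k+1+|\ttp| = {\cal K}_G(\ttx \,||\, A)+(k+1)$. Since $k+1$ depends only on $G$, this yields ${\cal K}_{\mathcal{U}} \leqct {\cal K}_G$.

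The argument is essentially routine; the only step that genuinely deserves care is the check that the prefix-decoding construction respects the oracle-use semantics of Definition \ref{def:functionals}, i.e.\ that $\mathcal{U}$ queries $A$ exactly when and where ${\cal M}$ does. This is clear because the decoder is oracle-free. A minor secondary issue is the case ${\cal K}_G(\ttx \,||\, A) = +\infty$, which is vacuous under the convention that $\leqct$ is interpreted with $+\infty$ absorbing; alternatively one can arrange for the fixed optimal $F$ of Part 1 to have surjective sections $F^A$ (by hard-coding a constant functional $\ttp \mapsto \ttp$ as one of the $F_k$'s), so that ${\cal K}(\ \cdot\, ||\, A)$ is total for every $A$.
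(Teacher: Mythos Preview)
Your proof is correct and follows essentially the same route as the paper: reduce Part~1 to Part~2, then use the prefix encoding $0^k1\ttp$ to obtain ${\cal K}_{\mathcal U}(\ttx\,||\,A)\leq{\cal K}_{F_k}(\ttx\,||\,A)+k+1$ with the constant $k+1$ independent of $A$. Your additional checks (that $\mathcal U$ is a partial computable functional, the oracle-free decoder, the $+\infty$ edge case) are not in the paper's terse proof but are welcome elaborations rather than a different approach.
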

\begin{proof}
It clearly suffices to prove Point 2.
The usual proof of the classical invariance theorem gives indeed
the functional version stated above.
\begin{eqnarray*}
{\cal K}_{F_k}(\ttx\,||\,A)&=&\min\{|\ttp|:F_k(\ttp,A)=\ttx\}\\
&=&\min\{|\ttp|:{\cal U}(0^k1\ttp,A)=\ttx\}\\
&=&\min\{|0^k1\ttp|-k-1:{\cal U}(0^k1\ttp,A)=\ttx\}\\
&\geq&\min\{|\ttq|-k-1:{\cal U}(\ttq,A)=\ttx\}\\
&=&\min\{|\ttq|:{\cal U}(\ttq,A)=\ttx\}-k-1\\
&=&{\cal K}_{\cal U}(\ttx\,||\,A)-k-1
\end{eqnarray*}
Whence $\ {\cal K}_{\cal U}\leq\ {\cal K}_{F_k}+k+1$ and therefore
$\ {\cal K}_{\cal U}\ \leqct\ {\cal K}_{F_k}$.
\end{proof}
\begin{remark}\label{rk:uniformCond}$\\ $
{\bf 1.}
Obviously, ${\cal K}_F(\ttx\,||\,A)$ does depend on $A$.
For example, if $\ttx\in\bbbn$ is incompressible then
${\cal K}_F(\ttx\,||\,\emptyset)\eqct\log(\ttx)$ whereas
${\cal K}_F(\ttx\,||\,\{\ttx\})\eqct0$.
\\
The contents of the functional invariance theorem is that,
for some $F$'s (the optimal ones) the number
\\\centerline{$\max\{{\cal K}_F(\ttx\,||\,A)
-{\cal K}_G(\ttx\,||\,A):\ttx\in\bbbn,A\in P(\bbbn)\}$}
is finite for any given $G$.
\medskip\\
{\bf 2.}
For the functional invariance theorem, we only have to suppose the
enumeration $(F_k){k\in\bbbn}$ to be partial computable as a
functional $\bbbn\times\words\times P(\bbbn)\to\bbbx$. There is
no need that it be acceptable (cf. Def.\ref{def:acceptable}).
\end{remark}
As for the usual Kolmogorov complexity, computable approximation
from above is possible.
\begin{proposition}\label{p:approxK}
There exists a total computable functional
$$(\ttx,t,A)\in\bbbx\times P(\bbbn)\times\bbbn
\mapsto{\cal K}^t(\ttx\,||\,A)$$
which is decreasing with respect to $t$ and such that,
for all $\ttx,A$,
$${\cal K}(\ttx\,||\,A)=\min\{{\cal K}^t(\ttx\,||\,A):t\in\bbbn\}$$
\end{proposition}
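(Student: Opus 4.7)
The plan is to mimic the standard time-bounded approximation of Kolmogorov complexity, but carried out by simulating the optimal computable functional $\mathcal{U}$ of Theorem~\ref{thm:uniformInvariance} rather than a plain universal partial recursive function. The key observation making this work uniformly in $A$ is that any halting oracle computation with time bound $t$ only issues at most $t$ queries to the oracle, so that the simulation itself is a total computable functional of $(\ttx, t, A)$.

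Concretely, I would proceed as follows. First, fix an optimal $\mathcal{U} \in PC^{\words \times P(\bbbn) \to \bbbx}$ and a trivial a priori upper bound $M(\ttx)$ on $\mathcal{K}(\ttx\,||\,A)$ uniform in $A$; for instance, since there is a partial computable functional that ignores $A$ and outputs $\ttx$ on some self-delimiting encoding of $\ttx$, we have $\mathcal{K}(\ttx\,||\,A) \leq |\ttx| + c$ for a fixed $c$, so I can set $M(\ttx) = |\ttx| + c$. Second, define
\[
\mathcal{K}^t(\ttx\,||\,A) \;=\; \min\bigl(\{M(\ttx)\} \cup S_t(\ttx, A)\bigr),
\]
where
\[
S_t(\ttx,A) \;=\; \bigl\{\,|\ttp| \,:\, |\ttp| \leq t,\ \mathcal{U}(\ttp, A) \text{ halts in at most $t$ steps with output } \ttx\,\bigr\}.
\]
Third, check the three required properties.

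For total computability of $(\ttx,t,A) \mapsto \mathcal{K}^t(\ttx\,||\,A)$: on input $(\ttx,t,A)$, a machine enumerates the finitely many $\ttp$ with $|\ttp|\leq t$ and, for each one, runs the universal oracle machine realizing $\mathcal{U}$ for at most $t$ steps with oracle $A$; each such simulation halts (we cut it off at step $t$) and makes at most $t$ queries to $A$, so the overall procedure is a total partial computable functional. For monotonicity in $t$: the inclusion $S_t(\ttx,A) \subseteq S_{t+1}(\ttx,A)$ is obvious, hence $\mathcal{K}^{t+1}(\ttx\,||\,A) \leq \mathcal{K}^t(\ttx\,||\,A)$. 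For the infimum equality: on the one hand, every $|\ttp| \in S_t(\ttx,A)$ witnesses $\mathcal{K}(\ttx\,||\,A) \leq |\ttp|$ and $M(\ttx)$ is also an upper bound, so $\mathcal{K}(\ttx\,||\,A) \leq \mathcal{K}^t(\ttx\,||\,A)$ for every $t$; on the other hand, if $\ttp$ realizes $\mathcal{K}(\ttx\,||\,A) = |\ttp|$, its computation halts in some finite number $s$ of steps, and then for every $t \geq \max(|\ttp|, s)$ we have $|\ttp| \in S_t(\ttx,A)$, hence $\mathcal{K}^t(\ttx\,||\,A) \leq |\ttp| = \mathcal{K}(\ttx\,||\,A)$.

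There is no real obstacle; the one point worth a sentence in the actual write-up is why the simulation is a genuine partial computable functional rather than something depending non-uniformly on $A$, and this is handled by the time cutoff which bounds the number of oracle queries. If one preferred to avoid the default value $M(\ttx)$, the same proof goes through by setting $\mathcal{K}^t$ recursively with $\mathcal{K}^0(\ttx\,||\,A) = M(\ttx)$ and $\mathcal{K}^{t+1}(\ttx\,||\,A) = \min(\mathcal{K}^t(\ttx\,||\,A),\ \min S_{t+1}(\ttx,A))$, which makes the decreasing property built-in by construction.
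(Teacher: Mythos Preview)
Your proposal is correct and follows essentially the same approach as the paper: time-bounded simulation of the optimal functional $\mathcal{U}$, with the set $S_t(\ttx,A)$ playing the same role as the paper's $B(\ttx,t,A)$. The only cosmetic difference is the handling of the default value when no short program has yet halted: you use an explicit a priori bound $M(\ttx)=|\ttx|+c$, whereas the paper instead searches for the first time $T(\ttx,A)$ at which $B(\ttx,t,A)$ becomes nonempty and uses $B(\ttx,T(\ttx,A),A)$ as the fallback---both devices serve the same purpose of ensuring totality and do not change the substance of the argument.
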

\begin{proof}
Letting ${\cal K}={\cal K}_{\cal U}$ where
${\cal K}\in PC^{\words\to\bbbn}$, set
\begin{eqnarray*}
B(\ttx,t,A)&=&\{|\ttp| : |\ttp|\leq t\ \wedge\ {\cal U}(\ttp,A)=\ttx\
\wedge\ {\cal U}(\ttp,A)\mbox{ halts in $\leq t$ steps}\}
\\
T(\ttx,A)&=&\mbox{smallest $t$ such that }B(\ttx,t,A)\neq\emptyset
\\
{\cal K}^t(\ttx\,||\,A)&=&\mbox{smallest }|\ttp|\in
B(\ttx,t,A)\cup B(\ttx,T(\ttx,A),A)
\end{eqnarray*}
\end{proof}
%
%
\section{The $Min/Max$ hierarchy of Kolmogorov complexities}
\label{s:KminKmax}
%
Infinite computations in relation with Kolmogorov complexity
were first considered in Chaitin, 1976 \cite{chaitin76higher} and
Solovay, 1977 \cite{solovay77}.
Becher \& Daicz \& Chaitin, 2001 \cite{becherchaitindaicz},
introduced a variant $\hi$ of the prefix version of Kolmogorov
complexity by allowing programs leading to possibly infinite
computations but finite output (i.e. remove the sole halting
condition).
This variant satisfies $H^{\emptyset'}\infct\hi\infct H$
(cf. \cite{becherchaitindaicz,bechernies}).
\\
In \cite{ferbusgrigoKmaxKmin}, 2004, we introduced a machine-free
definition $\kmax[]$ of the usual (non prefix) Kolmogorov version
$\ki$, together with a dual version $\kmin[]$.
The proof in \cite{bechernies} of the above
inequalities extends easily to the $K$ setting for $\kmax[]$.
However, a different argument is required in order to get
the $\kmin[]$ version (cf. \cite{ferbusgrigoKmaxKmin}).
%
\subsection{$Min/Max$ Kolmogorov complexities}
\label{ss:KminKmax}
%
The following definitions and theorems collects material
from \cite{ferbusgrigoKmaxKmin}.
The classical way to define Kolmogorov complexity extends
directly to these classes.
\begin{theorem}[$Min/Max$ Invariance theorem]
\label{thm:invarianceMinMax}
$\\ $
{\bf 1.}
Let ${\cal F}$ be $\minpr[\WN]$ or $\maxpr[\WN]$
(cf. Def.\ref{def:PRminPRmax}).
When $\phi$ varies in ${\cal F}$ there is a least $K_\phi$,
up to an additive constant (cf. Notation \ref{not:start}):
\begin{eqnarray*}
&\exists\phi\in \minpr[\WN]\ \
\forall\psi\in \minpr[\WN]\
\ \ K_\phi\leqct K_\psi&
\medskip\\
&\exists\phi\in \maxpr[\WN]\ \
\forall\psi\in \maxprA[\WN]\
\ \ K_\phi\leqct K_\psi&
\end{eqnarray*}
Such $\phi$'s are said to optimal in ${\cal F}$.
\\
We let
\\- $\kmin[]$ denote $K_\phi$ where $\phi$ is any function
optimal in $\minpr[\WN]$,
\\- $\kmax[]$ denote $K_\phi$ where $\phi$ is any function
optimal in $\maxpr[\WN]$.
\medskip\\
{\bf 2.}
Suppose $(\phi_k)_{k\in\bbbn}$ is an enumeration of
$\minpr[\WN]$ such that the function
$(k,\ttp)\mapsto\phi_k(\ttp)$ is in $\minpr[\NWN]$.
Let $U_{\min}$ be such that
$$U_{\min}(0^k1\ttp)=\phi_k(\ttp)\ \ \ \ \ \ \ \
U_{\min}(0^k)=\phi_k(\lambda)$$
Then $U_{\min}$ is optimal in $\minpr[\WN]$.
\\
Idem with $\maxpr[\WN]$.
\medskip\\
{\bf 3.}
Relativizing to an oracle $A\subseteq\bbbn$, one similarly
defines $\kmin[A]$ and $\kmax[A]$ and the analog of Point 2
also holds.
\end{theorem}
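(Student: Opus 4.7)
My plan is to mimic the classical Kolmogorov invariance argument, adapted to the $\min$ and $\max$ operators on partial recursive (functional) families, using the enumeration theorem (Prop.\ref{p:enumPRminPRmax}) and the closure-under-composition result (Prop.\ref{p:composition}). As noted for the functional invariance theorem (Thm.\ref{thm:uniformInvariance}), Point 1 is an immediate consequence of Point 2, so the work reduces to verifying that the displayed $U_{\min}$ is an optimal element of $\minpr[\WN]$, and the dual argument for $\maxpr[]$; Point 3 then comes free by repeating the construction with uniform enumerations in the sense of Prop.\ref{p:uniform}.

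First I would verify that $U_{\min}$ as displayed really lies in $\minpr[\WN]$. The decoding map $\ttq \mapsto (k,\ttp)$ that reads $\ttq = 0^k 1 \ttp$ (together with the convention on strings $0^k$) is total recursive from $\words$ to $\bbbn\times\words$. Hence if $(k,\ttp)\mapsto\phi_k(\ttp)$ is in $\minpr[\NWN]$, composing on the right with this recursive decoding gives an element of $\minpr[\WN]$ by Prop.\ref{p:composition}(1), and this composition is exactly $U_{\min}$. The analogous statement for $\maxpr[]$ uses the $\max$-version of Prop.\ref{p:composition}(1), which is proved identically.

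Next I would run the standard compression estimate: for each $k$ and each $\ttx$,
\begin{eqnarray*}
K_{U_{\min}}(\ttx)
&=&\min\{|\ttq| : U_{\min}(\ttq)=\ttx\}\\
&\leq&\min\{|0^k1\ttp| : \phi_k(\ttp)=\ttx\}\\
&=&\min\{|\ttp| : \phi_k(\ttp)=\ttx\}+k+1\\
&=&K_{\phi_k}(\ttx)+k+1,
\end{eqnarray*}
so $K_{U_{\min}}\leqct K_{\phi_k}$ with additive constant $k+1$. Since every $\psi\in\minpr[\WN]$ appears in the enumeration as some $\phi_k$, this gives optimality of $U_{\min}$. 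The same calculation, with the roles of $\min$ and $\max$ interchanged in the enumeration given by Prop.\ref{p:enumPRminPRmax}, proves optimality of the analogous $U_{\max}$ in $\maxpr[\WN]$.

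For Point 3, I would replay the argument with the acceptable enumerations of $\minprA[\WN]$ and $\maxprA[\WN]$ obtained from the functional enumerations of $\minpc[\words\times P(\bbbn)\to\bbbn]$ and $\maxpc[\words\times P(\bbbn)\to\bbbn]$ (Prop.\ref{p:enumPRminPRmax}) by fixing the second-order argument $A$, exactly as in Prop.\ref{p:uniform}; nothing in the compression inequality uses anything beyond enumerability of the family, so the same $k+1$ bound applies uniformly. The only genuine point to watch is step one, the membership of $U_{\min}$ in $\minpr[\WN]$: this is the place where the ``parametric $\min$'' must commute with the recursive decoding, and it is precisely the content of Prop.\ref{p:composition}(1). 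I do not expect any real obstacle elsewhere.
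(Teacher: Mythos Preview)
Your proposal is correct and follows the same approach the paper uses. The paper does not give an explicit proof of this theorem (it is cited from \cite{ferbusgrigoKmaxKmin}), but for the analogous functional invariance theorem (Thm.~\ref{thm:uniformInvariance}) and its $Min/Max$ version (Thm.~\ref{thm:MaxMinUniformInvariance}) the paper runs exactly the compression estimate you wrote, and states that Point~1 follows from Point~2; your additional step of checking $U_{\min}\in\minpr[\WN]$ via Prop.~\ref{p:composition}(1) is a detail the paper leaves implicit but which is indeed the right justification.
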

\begin{remark}[\cite{ferbusgrigoKmaxKmin}]
There exists optimal functions for $\maxpr[\WN]$
of the form $\max f$ where $f:\words\times\bbbn\to\bbbn$
is {\em total recursive}.
\\
This is false for $\minpr[\WN]$.
\end{remark}
Relativizing to the successive jumps oracles, we get an
infinite family of Kolmogorov complexities for which holds
a hierarchy theorem.
\begin{theorem}[The $Min/Max$ Kolmogorov hierarchy,
\cite{ferbusgrigoKmaxKmin}]\label{thm:hierarchyInfct}
$$\log \supct K \supct
\begin{array}{c}
\kmin[]\\ \kmax[]
\end{array}
\supct K^{\emptyset'}\supct
\begin{array}{c}
K^{\emptyset'}_\submin
\medskip\\ K^{\emptyset'}_\submax
\end{array}
\supct K^{\emptyset''}\supct
\begin{array}{c}
K^{\emptyset''}_\submin
\medskip\\ K^{\emptyset''}_\submax
\end{array}
\supct K^{\emptyset'''}...$$
\end{theorem}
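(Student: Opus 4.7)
The hierarchy is a chain of strict $\infct$-inequalities. Recall that $f\supct g$ means $g\leqct f\,\wedge\,\neg(f\leqct g)$, so at each link I must establish both a comparison up to an additive constant and a witness of non-comparison in the opposite direction. My plan is to handle all links by the same two-step scheme, and to relativize everything uniformly to the successive jumps so that the chain propagates.

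\textbf{Upper bounds ($\leqct$).} For the top link $K\leqct\log$, use the classical fact that any $x\in\{0,1\}^n$ has a self-delimiting program of size $n+O(\log n)$, and refine with the pairing that removes the $\log$ overhead. For $\kmin[A]\leqct K^A$ and $\kmax[A]\leqct K^A$: any partial $A$-recursive $\varphi:\words\to\bbbn$ can be regarded as a constant-in-$t$ element of $\minpr[A]^{\WN}$ (resp.\ $\maxpr[A]^{\WN}$), so by Thm.\ref{thm:invarianceMinMax} an optimal min/max function dominates $\varphi$ up to $O(1)$. For $K^{A'}\leqct\kmin[A],\kmax[A]$: by the relativized form of Prop.\ref{p:MinPRsubsetPR0'}, any function in $\minpr[A]$ or $\maxpr[A]$ has a $\Sigma^{0,A}_1\wedge\Pi^{0,A}_1$ graph and hence is partial $A'$-recursive, so an optimal min/max function is dominated up to $O(1)$ by any optimal partial $A'$-recursive function, i.e.\ by $K^{A'}$.

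\textbf{Strict separations ($\infct$).} For $\log\infct K$ use the compressible strings $0^n$ with $K(0^n)\eqct O(\log n)\ll n=|0^n|$. For $K\infct\kmax[]$ use a Busy-Beaver witness: letting $\mathrm{BB}(n)$ be the maximal halting time of programs of size $\leq n$, the map $n\mapsto\mathrm{BB}(n)$ lies in $\maxpr[\WN]$, so $\kmax[](\mathrm{BB}(n))\leqct 2\log n$, while $\mathrm{BB}(n)$ encodes $\emptyset'\!\!\upharpoonright n$ and thus $K(\mathrm{BB}(n))\geqct n-O(\log n)$. For $K\infct\kmin[]$ a dual construction works, witnessed by objects computed as the $\min$ of a partial recursive sequence whose tail encodes incompressible information (this is the technically delicate point, established in \cite{ferbusgrigoKmaxKmin}). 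The separations at the next level, $\kmin[],\kmax[]\infct K^{\emptyset'}$, are obtained in the same spirit: exhibit sequences of inputs whose values are trivially $\emptyset'$-computable (so have small $K^{\emptyset'}$) but whose every $\minpr[]$- or $\maxpr[]$-description must encode enough of the halting problem to force $\kmin[],\kmax[]$ to grow.

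\textbf{Propagation to the jumps.} Both steps above are uniform in the oracle: the trivial embedding $PR^A\hookrightarrow\minpr[A],\maxpr[A]$ and the syntactic bound from Prop.\ref{p:syntax} relativize literally. Applying the base separations with oracle $\emptyset^{(n)}$ in place of $\emptyset$ yields $K^{\emptyset^{(n)}}\supct\kmin[\emptyset^{(n)}],\kmax[\emptyset^{(n)}]\supct K^{\emptyset^{(n+1)}}$ for every $n$, producing the full chain displayed in the statement.

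\textbf{Main obstacle.} The inequality direction and the $\kmax$ separations are routine modulo the invariance theorems already set up. The real work is the $\kmin$-side separations: $\kmin$ approximates its argument from \emph{below} via a partial recursive sequence, so producing an input whose $\kmin$-value is provably large despite having a small $K$- or $K^{A'}$-description requires a genuine diagonal construction (or the argument of \cite{ferbusgrigoKmaxKmin}) rather than a direct Busy-Beaver-type compression. This is the single hardest ingredient, and once it is granted at the base level, the whole hierarchy follows by uniform relativization.
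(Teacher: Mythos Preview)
The paper does not prove this theorem directly---it is imported from \cite{ferbusgrigoKmaxKmin}---but it does prove the stronger Theorem~\ref{thm:hierarchy}, which implies the present statement since the $\lless[]\,$ orderings refine $\infct$ (Thm.~\ref{thm:order}, point v). On the $\leqct$ side your argument and the paper's coincide: both use the inclusions $PR^A\subseteq\minprA[],\maxprA[]\subseteq PR^{A'}$ (Prop.~\ref{p:MinPRsubsetPR0'}) together with the invariance theorems. For the strictness, however, the routes diverge. You build explicit witnesses ($0^n$, Busy Beaver values, and---for $\kmax[],\kmin[]\supct K^{\emptyset'}$---the $\emptyset'$-computable ``first $x$ with large $\kmax[](x)$'', which works because of Thm.~\ref{thm:KminRecIn0'}). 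The paper instead proves Barzdins-type density lemmas (Lemmas~\ref{l:ggK}, \ref{l:ggKmax}, \ref{l:ggKmin}): for every suitable $\phi$ tending to $+\infty$, the set $\{x:f(x)<\phi(g(x))\}$ is constructively $({\cal C},{\cal D})$-dense, and strictness of $\infct$ drops out as the special case $\phi(z)=\max(0,z-c)$. Your approach is more concrete and is presumably closer to the original argument in \cite{ferbusgrigoKmaxKmin}; the paper's approach is what yields the refined $\lless[]\,$ hierarchy that is the point of the present work.

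Two minor corrections to your sketch: (i) the optimal $\minpr[]$ function $U_{\min}=\min_t V(\cdot,t)$ is approximated from \emph{above}, not below, as $t$ grows---though you are right that this asymmetry is exactly what makes the $\kmin[]$ separations harder than the $\kmax[]$ ones; (ii) your separation $\kmax[],\kmin[]\supct K^{\emptyset'}$ is stated vaguely, but the clean argument is simply that $\kmax[],\kmin[]$ are $\emptyset'$-recursive, so one can $\emptyset'$-computably search for an $x$ of high $\kmax[]$ (or $\kmin[]$) complexity, giving $K^{\emptyset'}(x)=O(\log n)$ while $\kmax[](x)\geq n$.
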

Strict inequalities
$K\supct\kmax[]\supct K^{\emptyset'}
\supct\kmax[\emptyset']\supct K^{\emptyset''}$
were first proved by Becher \& Chaitin, 2001--2002
\cite{becherchaitindaicz} (for the prefix variants).
\medskip\\
The main application of the
$\lless[{\cal C},{\cal D}]{\cal F}$ and
$\lless[{\cal C},{\cal D}]{{\cal F}\uparrow}$ orderings
introduced in \S\ref{s:ll} is a strong improvement of this
hierarchy theorem, cf. Thm.\ref{thm:hierarchy}.
\medskip\\
Finally, we shall need the following result
(cf. \cite{ferbusgrigoKmaxKmin}, or \cite{becherchaitindaicz}
as concerns $\kmax[]$).
\begin{theorem}\label{thm:KminRecIn0'}
$K,\kmin[],\kmax[]$ are recursive in $\emptyset'$.
\end{theorem}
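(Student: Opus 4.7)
The plan is to establish each of the three complexities as an $\emptyset'$-recursive total function by syntactic analysis of its graph, using the two tools already at hand: Proposition \ref{p:syntax} (which says functions in $\minpr[]\cup\maxpr[]$ have $\Sigma^0_1\wedge\Pi^0_1$ graph), Proposition \ref{p:MinPRsubsetPR0'} (which says such functions are partial $\emptyset'$-recursive), and Remark \ref{rk:classicRec}.2 (a total function with $\Sigma^0_n$ graph is $\emptyset^{(n-1)}$-recursive).

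The easy case is $K$. I would observe that $K$ itself belongs to $\minpr[\bbbn\to\bbbn]$: if $U:\words\to\bbbn$ is the universal partial recursive function used to define $K$, set $\varphi(x,p)=|p|$ when $U(p)=x$ and undefined otherwise; then $K(x)=\min_p\varphi(x,p)$. Since $K$ is total (the optimal $U$ being onto), Proposition \ref{p:MinPRsubsetPR0'} immediately gives that $K$ is $\emptyset'$-recursive.

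For $\kmin[]$ and $\kmax[]$, the argument is one level up. Write $\kmin[]=K_\phi$ with $\phi\in\minpr[\WN]$ optimal (Theorem \ref{thm:invarianceMinMax}); by Proposition \ref{p:syntax} the graph of $\phi$ is $\Sigma^0_1\wedge\Pi^0_1$, hence lies in $\Delta^0_2$. Unfolding the definition,
\begin{eqnarray*}
\kmin[](x)=n &\Leftrightarrow& \bigl(\exists p\ |p|=n\ \wedge\ \phi(p)=x\bigr)\ \wedge\ \bigl(\forall p'\ |p'|<n\Rightarrow \phi(p')\neq x\bigr).
\end{eqnarray*}
The first conjunct is $\exists\Delta^0_2$, hence $\Sigma^0_2$. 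The second conjunct is a bounded universal quantification over a $\Delta^0_2$ predicate, which by (the proof idea of) Proposition \ref{p:boundedDelta02} stays in $\Delta^0_2$. So the graph of $\kmin[]$ is $\Sigma^0_2$, and since $\kmin[]$ is total (optimal $\phi$ can be arranged onto, so every $x$ admits a shortest code) Remark \ref{rk:classicRec}.2 yields that $\kmin[]$ is $\emptyset'$-recursive. The identical argument, with $\phi\in\maxpr[\WN]$ in place of $\minpr[\WN]$, handles $\kmax[]$.

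The only genuinely delicate point is the bounded-quantifier manipulation that keeps the ``$\forall p'<n$'' clause inside $\Delta^0_2$ despite $\phi(p')\neq x$ being only $\Delta^0_2$ (not $\Sigma^0_1$ or $\Pi^0_1$); this is precisely the content of Proposition \ref{p:boundedDelta02} applied with the trivial bound $\mu(n)=n$, so there is no new obstacle. Everything else is bookkeeping on classical facts, and totality of $\kmin[],\kmax[]$ can be secured at the level of the invariance theorem by ensuring optimal machines are surjective, an essentially notational adjustment.
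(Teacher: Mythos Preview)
Your argument is correct. The paper does not actually supply a proof of Theorem~\ref{thm:KminRecIn0'}; it merely cites \cite{ferbusgrigoKmaxKmin} (and \cite{becherchaitindaicz} for $\kmax[]$), so there is nothing to compare against in the present text. Your proof has the merit of being self-contained within the paper's own machinery (Propositions~\ref{p:syntax}, \ref{p:MinPRsubsetPR0'}, \ref{p:boundedDelta02} and Remark~\ref{rk:classicRec}.2). One small sharpening: in your first conjunct $\exists p\,(|p|=n\wedge\phi(p)=x)$, the quantifier over $p$ is in fact bounded (there are only $2^n$ words of length $n$), so this clause is already $\Delta^0_2$ rather than merely $\Sigma^0_2$; thus the whole graph is $\Delta^0_2$ outright. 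This does not affect the conclusion, since totality plus $\Sigma^0_2$ graph already suffices via Remark~\ref{rk:classicRec}.2, but it makes the bookkeeping slightly cleaner.
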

%
\subsection{Functional $Min/Max$ Kolmogorov complexities}
\label{ss:uniformKmiKmax}
%
The Invariance Theorems for $\maxpr[]$ and $\minpr[]$
(cf. Thm.\ref{thm:invarianceMinMax}) admit functional versions,
the proofs of which are exactly the same as that in
Thm.\ref{thm:uniformInvariance}.
\begin{theorem}[$Min/Max$ Functional Invariance Theorem]
\label{thm:MaxMinUniformInvariance}$\\ $
{\bf 1.}
When $F:\words\times P(\bbbn)\to\bbbn$ varies over
$\minpc[\words\times P(\bbbn)\to\bbbn]$ or over
$\maxpc[\words\times P(\bbbn)\to\bbbn]$,
there is a least $K_F$ up to an additive constant:
\begin{eqnarray*}
&\exists F\in \minpc[P(\bbbn)\times\words\to\bbbn]\ \
\forall G\in \minpc[P(\bbbn)\times\words\to\bbbn]\ \
\ {\cal K}_F\leqct {\cal K}_G&
\\
&\exists F\in \maxpc[P(\bbbn)\times\words\to\bbbn]\ \
\forall G\in \maxpc[P(\bbbn)\times\words\to\bbbn]\ \
\ {\cal K}_F\leqct {\cal K}_G&
\end{eqnarray*}
Such an $F$ is said to be optimal in
$\minpc[P(\bbbn)\times\words\to\bbbn]$ or in
$\maxpc[P(\bbbn)\times\words\to\bbbn]$.
\\
We let $\kmin[](\ ||\ )={\cal K}_F$ and
$\kmax[](\ ||\ )={\cal K}_F$
be some fixed such optimal functionals.
\medskip\\
{\bf 2.}
Let $(F_k)_{k\in\bbbn}$ be an enumeration of
$\minpc[P(\bbbn)\times\words\to\bbbn]$ which is itself in
$\minpc[\bbbn\times P(\bbbn)\times\words\to\bbbn]$.
Let ${\cal U}_{\min}$ be such that
$$\calUmin[](0^k1\ttp,A)=F_k(\ttp,A)\ \ \ \ \ \ \
\calUmin[](0^k)=0$$
Then $\calUmin[]$ is optimal in $\minpc[\WN]$.
\\
One defines similarly $\calUmax[]$ which is optimal in $\maxpc[]$.
\end{theorem}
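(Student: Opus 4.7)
The plan is to adapt the proof of Theorem \ref{thm:uniformInvariance} to the $Min/Max$ setting. As in that theorem, it suffices to establish Part 2, since Part 1 follows by fixing any enumeration of $\minpc[P(\bbbn)\times\words\to\bbbn]$ (resp. $\maxpc[P(\bbbn)\times\words\to\bbbn]$) of the prescribed form; such enumerations exist by the functional version of Proposition \ref{p:enumPRminPRmax}.

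For the $\min$ case, first unfold the enumeration to the underlying $PC$ level. Since $(k,\ttp,A)\mapsto F_k(\ttp,A)$ lies in $\minpc[\bbbn\times P(\bbbn)\times\words\to\bbbn]$, there is a partial computable functional $\Psi$ with $F_k(\ttp,A)=\min_t\Psi(k,\ttp,A,t)$. Define $\widetilde\Psi(\ttq,A,t)$ by recursively parsing $\ttq$: if $\ttq=0^k1\ttp$, set $\widetilde\Psi(\ttq,A,t)=\Psi(k,\ttp,A,t)$, and if $\ttq=0^k$, set $\widetilde\Psi(\ttq,A,t)=0$. The parsing is recursive, so $\widetilde\Psi\in PC$, and $\calUmin[]:=\min\widetilde\Psi$ belongs to $\minpc$ and agrees with the formula in the statement (noting that $\min$ of a singleton equals its unique value).

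Optimality now follows by the standard prefix-code argument applied pointwise in $A$:
\begin{eqnarray*}
{\cal K}_{F_k}(\ttx\,||\,A)
&=&\min\{|\ttp|:F_k(\ttp,A)=\ttx\}\\
&=&\min\{|0^k1\ttp|-k-1:\calUmin[](0^k1\ttp,A)=\ttx\}\\
&\geq&\min\{|\ttq|:\calUmin[](\ttq,A)=\ttx\}-k-1\\
&=&{\cal K}_{\calUmin[]}(\ttx\,||\,A)-k-1,
\end{eqnarray*}
so ${\cal K}_{\calUmin[]}\leq {\cal K}_{F_k}+k+1$, whence ${\cal K}_{\calUmin[]}\leqct{\cal K}_{F_k}$ with a constant that does not depend on $A$ (this is precisely the benefit of the functional formulation advertised in \S\ref{ss:functionalK}). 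The $\max$ case is strictly symmetric: substitute $\maxpc$ for $\minpc$ throughout, take $\widetilde\Psi$ from the analogous $PC$ unfolding, and set $\calUmax[](0^k,A)=0$, which is consistent with $\max$ of a singleton.

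The only delicate point is the verification that $\calUmin[]$ genuinely sits in $\minpc$ (and $\calUmax[]$ in $\maxpc$), rather than in some nested class obtained by combining $\min/\max$ with a case analysis on the input word. Unfolding the enumeration to the $PC$ functional $\Psi$ before diagonalizing is exactly what avoids introducing a second $\min/\max$ layer: the case split on $\ttq$ is absorbed directly into the $PC$ functional $\widetilde\Psi$. Everything else is bookkeeping that tracks the second-order parameter $A$ through the prefix-code compression.
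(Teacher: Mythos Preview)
Your proof is correct and follows essentially the same approach as the paper, which simply remarks that the argument of Theorem~\ref{thm:uniformInvariance} carries over verbatim. You add the explicit verification that $\calUmin[]$ lies in $\minpc[]$ by unfolding the enumeration to a single $PC$ functional $\Psi$ before applying $\min$; the paper leaves this step implicit, but it is indeed the one point where the Min/Max case requires a moment's thought beyond the plain functional case.
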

\begin{remark}\label{rk:minmax}$\\ $
{\bf 1.}
Using the technique of \cite{ferbusgrigoKmaxKmin},
we see that there exists optimal functionals for $\maxpc[]$
of the form $\max F$ where
$F:\words\times\bbbn\times P(\bbbn)\to\bbbn$
is {\em total recursive}.
This is false for $\minpc[]$.
\medskip\\
{\bf 2.}
The inclusions $\pc[]\subseteq\maxpc[]\cap\minpc[]$ imply
that $\calkmin[](\ ||\ )\leqct {\cal K}(\ ||\ )$ and
$\calkmax[](\ ||\ )\leqct {\cal K}(\ ||\ )$.
Also, as is well-known, ${\cal K}(\ ||\ )\leqct\log$.
We can choose $\calkmin[],\calkmax[]$ so that the constant is $0$,
i.e. for all $x$ and $A$,
$$\calkmin[](x\,||\,A)\leq {\cal K}(x\,||\,A)\leq\log\ \ \ ,
\ \ \ \calkmax[](x\,||\,A)\leq {\cal K}(x\,||\,A)\leq\log$$
{\bf 3.}
In fact, the Min/Max hierarchy Theorem \ref{thm:hierarchyInfct}
extends to the functional setting. In \S\ref{ss:hierarchy} we shall
prove a much stronger result, cf. Thm.\ref{thm:hierarchy}.
\end{remark}
%
%
\section{Functional versus oracular}
\label{s:unifVersusOracle}
Functional Kolmogorov complexities allow for a uniform choice
of oracular Kolmogorov complexities.
The benefit of such a uniform choice was developed in
\S\ref{ss:functionalK} and is illustrated in the hierarchy theorem
in \S\ref{ss:hierarchy}.
\begin{theorem}\label{thm:unifVersusOracle}
Denote $K^A,\kmin[A],\kmax[A]:\bbbx\to\bbbn$ the Kolmogorov
complexities associated to the families $PR^A$ of partial
$A$-recursive functions and the families $\minprA[],\maxprA[]$
obtained by application of the $\min$ and $\max$ operators
to $PR^{A,\bbbx\times\bbbn\to\bbbn}$.
For all $A\subseteq\bbbn$, we have
$$K^A\eqct {\cal K}(\ ||\,A)\ \ ,\ \ \
\kmin[A]\eqct \calkmin[](\ ||\,A)\ \ ,\ \ \
\kmax[A]\eqct \calkmax[](\ ||\,A)$$
i.e.
$\forall\ A\in P(\bbbn)\ \exists c_A\ \forall\ttx\
\left\{\begin{array}{lcl}
|K^A(\ttx)-{\cal K}(\ttx\,||\,A)|&\leq&c_A\\
|\kmin[A](\ttx)-\calkmin[](\ttx\,||\,A)|)&\leq&c_A\\
|\kmax[A](\ttx)-\calkmax[](\ttx\,||\,A)|)&\leq&c_A
\end{array}\right.$
\end{theorem}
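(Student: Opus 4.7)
The plan is to prove each of the three equivalences $K^A\eqct {\cal K}(\cdot\,||\,A)$, $\kmin[A]\eqct\calkmin[](\cdot\,||\,A)$, $\kmax[A]\eqct\calkmax[](\cdot\,||\,A)$ by the same two‑step scheme, using Proposition~\ref{p:uniformCond} (which converts ${\cal K}_F(\cdot\,||\,A)$ into $K_{F^A}$) together with the functional invariance theorems and the uniformity of the enumerations from \S\ref{ss:uniform}. In fact, each of the inequalities involved will turn out to hold with a constant independent of $A$, so the statement with $c_A$ depending on $A$ is immediate.

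For the pair $(K^A,{\cal K}(\cdot\,||\,A))$, I would argue as follows. Fix acceptable enumerations $(\Phi_i)_{i\in\bbbn}$ of $PC^{\words\times P(\bbbn)\to\bbbn}$ and the associated uniform enumeration $(\varphi^A_i)_{i\in\bbbn}$ of $PR^{\words\to\bbbn,A}$ given by Prop.~\ref{p:uniform}, and take the optimal ${\cal U}$ of Thm.~\ref{thm:uniformInvariance}, say ${\cal U}=\Phi_{i_0}$. Then ${\cal U}^A=\varphi^A_{i_0}$ for every $A$, and by Prop.~\ref{p:uniformCond}, ${\cal K}(\ttx\,||\,A)=K_{{\cal U}^A}(\ttx)$. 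Build an optimal $U^A$ for $K^A$ by the standard prefix construction $U^A(0^k1\ttp)=\varphi^A_k(\ttp)$: this gives $K^A(\ttx)\leq K_{\varphi^A_{i_0}}(\ttx)+i_0+1={\cal K}(\ttx\,||\,A)+i_0+1$, and the bound $i_0+1$ does not depend on $A$. For the reverse direction, observe that the very construction $U^A(\ttq)=V(\ttq,A)$ for the single partial computable functional $V(0^k1\ttp,A)=\Phi_k(\ttp,A)$ shows $V\in PC^{\words\times P(\bbbn)\to\bbbn}$; hence $K^A(\ttx)=K_{V^A}(\ttx)={\cal K}_V(\ttx\,||\,A)$, and functional invariance yields a constant $d$ with ${\cal K}\leq{\cal K}_V+d$, giving ${\cal K}(\ttx\,||\,A)\leq K^A(\ttx)+d$ with $d$ independent of $A$. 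Combining the two inequalities proves $K^A\eqct{\cal K}(\cdot\,||\,A)$.

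The arguments for $(\kmin[A],\calkmin[](\cdot\,||\,A))$ and $(\kmax[A],\calkmax[](\cdot\,||\,A))$ are then completely parallel, using Thm.~\ref{thm:invarianceMinMax} (and its hypothesis that one can enumerate $\minpr[]$ by a function that is itself in $\minpr[]$) together with its functional counterpart Thm.~\ref{thm:MaxMinUniformInvariance}, and the enumeration theorem Prop.~\ref{p:enumPRminPRmax}. The one point to check is that fixing $A$ in a partial computable functional $F\in\minpc[P(\bbbn)\times\words\to\bbbn]$ yields a function $F^A$ in $\minprA[\words\to\bbbn]$: this is immediate since if $F(\ttp,A)=\min_t\Phi(\ttp,A,t)$ with $\Phi$ a partial computable functional, then $F^A(\ttp)=\min_t\Phi^A(\ttp,t)$ with $\Phi^A\in PR^{\words\times\bbbn\to\bbbn,A}$. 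Symmetrically, the optimal $U^A_{\min}$ produced by the prefix construction in Thm.~\ref{thm:invarianceMinMax} is $V^A$ for a single $V\in\minpc[\words\times P(\bbbn)\to\bbbn]$, because the prefix gluing is uniform in the oracle.

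The only subtle point, and the only place where one has to be slightly careful, is precisely the uniformity needed to turn the $A$-indexed constructions into single partial computable functionals: it is the uniform enumeration machinery of \S\ref{ss:uniform} (Prop.~\ref{p:uniform} and Rogers' Thm.~\ref{thm:rogers} for functionals) that lets one write $\varphi^A_i=\Phi_i(\cdot,A)$ and, conversely, identify the optimal $A$-oracle machines with $F^A$ for fixed $F\in PC,\minpc,\maxpc$ respectively. Once this is granted, both inequalities in each of the three $\eqct$'s follow at once from the corresponding invariance theorem and Prop.~\ref{p:uniformCond}, and a single $c_A$ (actually, a single $c$ working for all $A$) can be chosen, yielding the theorem.
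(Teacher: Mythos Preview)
Your proof is correct and follows essentially the same approach as the paper: the key observation in both is that the optimal functional ${\cal U}$ (your $V$), when specialized to the oracle $A$, yields an optimal function for $PR^A$ because $(\Phi_k^A)_k$ enumerates $PR^A$ and $V^A(0^k1\ttp)=\Phi_k^A(\ttp)$. The paper's write-up is slightly more streamlined in that it handles both directions at once by invoking the relativized invariance theorem directly on $U^A={\cal U}^A$, whereas you separate the two inequalities; note also that your claim of an $A$-independent constant holds only for your specific uniform choice $K^A=K_{V^A}$, not for an arbitrary optimal representative of $K^A$, which is why the theorem as stated allows $c_A$ to depend on $A$.
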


\begin{proof}
1. We let $(F_k)_{k\in\bbbn}$ and ${\cal U}$ be as in Point 2
of Thm.\ref{thm:uniformInvariance} and let
$F^A_k(\ttx)=F_k(\ttx,A)$ and $U^A(\ttx)={\cal U}(\ttx,A)$.
The sequence $(F^A_k)_{k\in\bbbn}$ is an enumeration of
the family $PR^{A,\words\to\bbbn}$ of partial $A$-recursive
functions $\words\to\bbbx$, which is partial $A$-recursive
as a function $\bbbn\times\words\to\bbbx$.
Since $U^A(0^k1\ttp)=F_k^A(\ttp)$, the classical
invariance theorem, in its relativized version, insures that
$U^A$ is optimal in $PR^{A,\words\to\bbbn}$,
whence $K^A\eqct K_{U^A}$.
\\
Now, ${\cal U}$ is optimal in $PC^{P(\bbbn)\times\words\to\bbbn}$,
whence ${\cal K}(\ ||\ )\eqct {\cal K}_{\cal U}(\ ||\ )$.
\\
Prop.\ref{p:uniformCond} insures
$K_{U^A}(\ttx)={\cal K}_{\cal U}(\ttx||A)$,
whence $K^A\eqct {\cal K}(\ ||A)$.
\medskip\\
2. The $Min$ and $Max$ cases are similar.
\end{proof}
%
%
\section{Refining the oracular Min/Max hierarchy with the
$\ll,\ll_\uparrow$ orderings}
\label{s:barzdins}
%
\subsection{Barzdins' theorem in a uniform setting}
\label{ss:barzdins}
%
The next lemma is essentially Barzdins' result cited in
\S\ref{ss:immune}.
In order (point 2) to get a relativized result with $\theta$
recursive rather than merely $A$-recursive, we shall look at the
oracle $A$ as a parameter and use uniform Kolmogorov complexity,
cf. \S\ref{ss:uniformInvariance}, \S\ref{s:unifVersusOracle}.
\begin{lemma}\label{l:ggK}$\\ $
{\bf 1.}
If $\ \varphi:\bbbn\to\bbbn$ is total recursive and tends to
$+\infty$ then $\{x : K(x)<\varphi(x)\}$ is an r.e. set which is
constructively $\Sigma^0_1$-dense.
\\
Moreover, this result is uniform in $\varphi$.
In fact, let $(\varphi_i)_{i\in\bbbn}$ and $(W_i)_{i\in\bbbn}$
be acceptable enumerations of partial recursive functions
$\bbbn\to\bbbn$ and r.e. subsets of $\bbbn$,
there are total recursive functions
$\xi:\bbbn\to\bbbn$ and $\theta:\bbbn^2\to\bbbn$ such that
\begin{enumerate}
\item[i.]
$\forall i\ \{x\in domain(\varphi_i):K(x)<\varphi_i(x)\}
=W_{\xi(i)}$
\item[ii.]
$\forall i,j\ (\varphi_i\mbox{ is unbounded on }
domain(\varphi_i)\cap W_j$

\hfill{$\Rightarrow\ (W_{\theta(i,j)}\mbox{ is infinite }\wedge\
W_{\theta(i,j)}\subseteq W_j\cap\{x:K(x)<\varphi_i(x)\}))$}
\end{enumerate}
%
{\bf 2.}
Consider second order Kolmogorov complexity ${\cal K}(x\,||\,A)$
and an acceptable enumeration $(\Phi_i)_{i\in\bbbn}$ of
partial computable functionals $\bbbn\times P(\bbbn)\to\bbbn$.
Using Thm.\ref{thm:unifVersusOracle} and Prop.\ref{p:uniform},
we shall consider ${\cal K}(x\,||\,A)$ as uniform Kolmogorov
relativization $K^A(x)$ and $\Phi_i(\ttx,A)$ as a uniform
oracle $A$ partial recursive function $\varphi^A_i(x)$.
We also denote $W^A_i=domain(\varphi^A_i)$.
\\
Point 1 relativizes uniformly, i.e., the above total
recursive functions
$\xi:\bbbn\to\bbbn$ and $\theta:\bbbn^2\to\bbbn$ can be taken so as
to satisfy {\em all} possible relativized conditions, i.e.
\begin{enumerate}
\item[i.]
$\forall i\ \forall A\
\{x\in domain(\varphi^A_i):K^A(x)<\varphi^A_i(x)\}
=W^A_{\xi(i)}$
\item[ii.]
$\forall i,j\ \forall A\ (\varphi^A_i\mbox{ is unbounded on }
domain(\varphi^A_i)\cap W^A_j$

\hfill{$\Rightarrow\ (W^A_{\theta(i,j)}\mbox{ is infinite }\wedge\
W^A_{\theta(i,j)}\subseteq W^A_j\cap\{x:K^A(x)<\varphi^A_i(x)\}))$}
\end{enumerate}
\end{lemma}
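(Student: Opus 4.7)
The strategy is to prove Point 1 in such a way that the construction of $\xi$ and $\theta$ depends only on the acceptable enumeration of partial computable functionals and the $s$--$m$--$n$ property, and then to observe that the \emph{same} recursive functions witness Point 2. The structural reason this works is the content of Remark~\ref{rk:uniformCond}: for any partial computable functional $F$, there is a constant $c$, depending only on (a code for) $F$ but \emph{not on $A$}, such that ${\cal K}(x\,\|\,A)\leq {\cal K}_F(x\,\|\,A)+c$ for all $x,A$. This is exactly what allows the transfer from Point~1 to Point~2 with recursive, rather than merely $A$-recursive, transfer functions.

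\textbf{Constructing $\xi$.} Using the computable approximation ${\cal K}^t(x\,\|\,A)$ from above (cf.\ Prop.~\ref{p:approxK}), define a partial computable functional that on input $(i,x,A)$ waits until $\Phi_i(x,A)$ converges to some value $v$ and until some stage $t$ satisfies ${\cal K}^t(x\,\|\,A)<v$, then halts. Its domain, for fixed $i$ and $A$, is exactly $\{x\in domain(\varphi^A_i):{\cal K}(x\,\|\,A)<\varphi^A_i(x)\}$. The $s$--$m$--$n$ property of the acceptable enumeration of functionals yields a total recursive $\xi$ as required, working simultaneously for all $A$.

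\textbf{Constructing $\theta$.} Define a partial computable functional $M(i,j,n,A)$ that enumerates $W^A_j$, evaluates $\Phi_i(\cdot,A)$ in parallel, and returns the $n$-th element $y$ of $W^A_j\cap domain(\varphi^A_i)$ on which $\varphi^A_i(y)$ strictly exceeds all previously retained values. Under the hypothesis that $\varphi^A_i$ is unbounded on $W^A_j\cap domain(\varphi^A_i)$, the sequence $n\mapsto x_n^A=M(i,j,n,A)$ is total and injective, and $\varphi^A_i(x_n^A)\geq n$ by construction. Since $M$ is a single partial computable functional, fixing $i,j$ via $s$--$m$--$n$ gives a functional with a code $k(i,j)$ (recursive in $i,j$) relative to the optimal ${\cal U}$ of Thm.~\ref{thm:uniformInvariance}, so that
\[
{\cal K}(x_n^A\,\|\,A)\ \leq\ |n|+c_{ij}
\]
with $c_{ij}$ depending only on $i,j$ and \emph{independent of $A$}. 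Hence for all $n$ large enough (where largeness may depend on $A$), $\varphi^A_i(x_n^A)\geq n>|n|+c_{ij}\geq {\cal K}(x_n^A\,\|\,A)$, so the corresponding tail of $\{x_n^A\}$ is an infinite subset of $W^A_j\cap\{x:{\cal K}(x\,\|\,A)<\varphi^A_i(x)\}$. One further application of $s$--$m$--$n$ to the functional that enumerates this tail (the finite initial segment being absorbed by enumerating only sufficiently late elements, or by bounding $n$ by a running maximum of $\varphi^A_i$) produces the recursive $\theta(i,j)$.

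\textbf{Main obstacle.} The combinatorics of Step~2 is essentially Barzdins' classical argument; the serious content is ensuring that every object in the construction is presented as a single partial computable functional so that its ${\cal U}$-code is $A$-independent. If one worked with the oracular complexity $K^A$ and with oracular enumerations $(\varphi^A_i)_{i\in\bbbn}$ built separately for each $A$, the invariance theorem would at best yield a constant $c_A$ depending on $A$ in the inequality $K^A(x_n^A)\leq|n|+c_A$, and thus only an $A$-recursive $\theta$. The functional reframing of Kolmogorov complexity developed in \S\ref{s:uniform} (together with the acceptable enumerations of partial computable functionals of \S\ref{s:functional}) is precisely what absorbs the oracle into a second-order parameter and keeps the invariance constant $c_{ij}$ oracle-free.
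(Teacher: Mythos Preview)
Your proposal is correct and follows essentially the same strategy as the paper's proof: both build a single partial computable functional that, given $(i,j)$ and a short ``program'' (a string $\ttp$ in the paper, an integer $n$ in your version), selects an element of $W^A_j$ on which $\varphi^A_i$ is large, and then invoke the functional invariance theorem so that the resulting constant is recursive in $(i,j)$ and independent of $A$. The only cosmetic differences are that the paper demands $\varphi_i(x)>2|\ttp|$ and restricts to $|\ttp|>\eta(i,j)$, whereas you extract a subsequence with strictly increasing $\varphi^A_i$-values (forcing $\varphi^A_i(x_n)\geq n$) and cut at $n>|n|+c_{ij}$; and for Point~1i the paper uses the explicit $\Sigma^0_1$ form $\exists\ttp\,(|\ttp|<\varphi_i(x)\wedge U(\ttp)=x)$ rather than the approximation ${\cal K}^t$. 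Your parenthetical ``where largeness may depend on $A$'' is in fact unnecessary, since the threshold $n>|n|+c_{ij}$ is $A$-free.
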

\begin{note}\label{note:best}
Lemma \ref{l:ggK} is optimal in the sense that there is no
possible $(\Pi^0_1,{\cal E})$-density result for
$\{x : K(x)<\varphi(x)\}$ since this set has $\Pi^0_1$ complement.
\end{note}
\begin{proof}
{\em Point 1i.}
Let $K=K_U$ where $U\in PR^{\words\to\bbbn}$. Then
\begin{eqnarray*}
K(x)<\varphi_i(x)&\Leftrightarrow&
\exists \ttp\ (|\ttp|<\varphi_i(x)\ \wedge\ U(\ttp)=x)
\end{eqnarray*}
which is a $\Sigma^0_1$ condition.
Therefore $\{(i,x):K(x)<\varphi_i(x)\}$ is r.e. and the
parametrization theorem yields the desired total recursive
function $\xi$.
\medskip\\
{\em Point 1ii.}\\
In order to prove constructive $\Sigma^0_1$-density
uniformly in $\varphi$, we first define a partial recursive
function $\alpha:\bbbn^2\times\words\to\bbbn$ such that
\begin{quote}
if there exists some $x\in W_j$ such that
$\varphi_i(x)\geq 2\,|\ttp|$ then $\alpha(i,j,\ttp)$
is such an $x$, else $\alpha(i,j,\ttp)$ is undefined.
\end{quote}
Then we shall use the facts that
$$K\leqct K_{\ttp\mapsto\alpha(i,j,\ttp)}\ \ ,\ \
K_{\ttp\mapsto\alpha(i,j,\ttp)}(\alpha(i,j,\ttp))\leqct |\ttp|$$
to get an inequality $K(\alpha(i,j,\ttp))\leqct|\ttp|$ from which
$K(\alpha(i,j,\ttp))<\varphi_i(\ttp)$ can be deduced.
\medskip\\
{\em a.}
The formal definition of $\alpha$ as a partial recursive function
is as follows.
\\
Denote $W_{j,t}$ the finite subset of $W_j$ obtained
after $t$ steps of its standard enumeration.
Let $Z_t:\bbbn^2\times\words\to P(\bbbn)$ be such that
$$Z_t(i,j,\ttp)=\{x\in W_{j,t} : \varphi_i(x)
\mbox{ halts in }\leq t\mbox{ steps and is }> 2\,|\ttp|\}$$
Clearly, $\{(t,i,j,\ttp):Z_t(i,j,\ttp)\neq\emptyset\}$ is a
recursive subset of $\bbbn^3\times\words$.
Thus, we can define the partial recursive function $\ \alpha$
as follows:
\begin{eqnarray*}
domain(\alpha)&=&\{(i,j,\ttp):\exists t\ Z_t(i,j,\ttp)\neq\emptyset\}
\\
\alpha(i,j,\ttp)
&=&\mbox{the first element in }Z_t(i,j,\ttp)
\\ &&\mbox{where }t\mbox{ is least such that }
                Z_t(i,j,\ttp)\neq\emptyset
\end{eqnarray*}
Let $(\psi_i)_{i\in\bbbn}$ be an acceptable enumeration of
$PR^{\words\to\bbbn}$. Since $\alpha$ is partial recursive,
there exists a total recursive function $\eta:\bbbn^2\to\bbbn$ such
that $\alpha(i,j,\ttp)=\psi_{\eta(i,j)}(\ttp)$ for all $i,j,\ttp$.
Finally, we let $\theta$ be a total recursive function such that
$$W_{\theta(i,j)}=\psi_{\eta(i,j)}(\{\ttp : |\ttp|>\eta(i,j)
\ \wedge\ (i,j,\ttp)\in domain(\psi_{\eta(i,j)})\})$$
Since $\alpha$ and $\psi_{\eta(i,j)}$ take values
in $W_j$, we have $W_{\theta(i,j)}\subseteq W_j$ for all $i,j$.
\medskip\\
{\em b.} Let $U:\words\to\bbbn$ be such that
$U(0^k1\ttp)=\psi_k(\ttp)$ and $U(0^k)=\psi_k(\lambda)$
(where $\lambda$ is the empty word).
The usual invariance theorem insures that $U$ is optimal.
Thus, we can (and shall) suppose that $K=K_U$.
\\
Since $\alpha(i,j,\ttp)=\psi_{\eta(i,j)}(\ttp)$, we have
$\alpha(i,j,\ttp)=U(0^{\eta(i,j)}1\ttp)$.
Thus, for any $(i,j,\ttp)\in domain(\alpha)$,
$$K(\alpha(i,j,\ttp))=K_U(U(0^{\eta(i,j)}1\ttp))
\leq|0^{\eta(i,j)}1\ttp|=|\ttp|+\eta(i,j)+1$$
{\em c.} Suppose now that $\varphi_i$ is unbounded on
$domain(\varphi_i)\cap W_j$.
Then, for all $\ttp$, the set
$Z_t(i,j,\ttp)$ is non empty for $t$ big enough,
so that $\alpha(i,j,\ttp)=\psi_{\eta(i,j)}(\ttp)$ is defined
for all $\ttp$.
Also, due to the definition of $Z_t$, we see that
$\alpha(i,j,\ttp)$ tends to $+\infty$ with the length of $\ttp$.
In particular, $W_{\theta(i,j)}$ is infinite.
\\
 From the definition of $\alpha$, we see that
$\varphi_i(\alpha(i,j,\ttp))> 2|\ttp|$.
Using {\em b}, we see that for all $|\ttp|>\eta(i,j)$, we have
$K(\alpha(i,j,\ttp))\leq2\,\ttp<\varphi(\alpha(i,j,\ttp))$.
\\
This proves that $W_{\theta(i,j)}$ is included in
$\{x : K(x)<\varphi_i(x)\}$.
\\
Thus, $W_{\theta(i,j)}$ is an infinite r.e. set included in
$W_j \cap\ \{x : K(x)<\varphi_i(x)\}$.
\medskip\\
{\em Point 2i.}
Let ${\cal K}(\ ||\ )={\cal K}_{\cal U}$ where
${\cal U}\in PC^{P(\bbbn)\times\words\to\bbbn}$. Then
\begin{eqnarray*}
K^A(x)<\varphi^A_i(x)&\Leftrightarrow&
\exists \ttp\ (|\ttp|<\varphi^A_ix\ \wedge\ {\cal U}(\ttp,A)=x)
\end{eqnarray*}
which is a $\Sigma^0_1$ condition.
Therefore $\{(i,x,A):K^A(x)<\varphi^A_i(x)\}$ is $\Sigma^0_1$
and the parametrization property (cf. Def.\ref{def:acceptable})
yields the desired total recursive function $\xi$.
\medskip\\
{\em Point 2ii.}
The proof is similar to that of Point 1ii. Just add everywhere a
second order argument $A$ varying in $P(\bbbn)$
and use the parametrization property of Def.\ref{def:acceptable}.
Thus, $\alpha$ is now a partial computable functional
\\\centerline{$\alpha:\bbbn^2\times\words\times P(\bbbn)\to\bbbn$}
The enumeration $(\psi_i)_{i\in\bbbn}$ now becomes an enumeration
$(\Psi_i)_{i\in\bbbn}$ of the partial computable functionals
$\words\times P(\bbbn)\to\bbbn$.
The total recursive functions $\eta,\theta$ are now such that
$\ \alpha(i,j,\ttp,A)=\Psi_{\eta(i,j)}(\ttp,A)$ and
$$W^A_{\theta(i,j)}=\{\Psi_{\eta(i,j)}(\ttp,A) :
\ttp\mbox{ such that }(i,j,\ttp,A)\in domain(\alpha)\
\wedge\ |\ttp|>\eta(i,j)\}$$
The arguments in {\em b,c} above go through with the superscript
$A$ everywhere and with ${\cal U}$ (cf. proof of Point 2i above)
in place of $U$.
\end{proof}
\begin{remark}$\\ $
{\bf 1.}
Lemma \ref{l:ggK} still holds for $\phi\in\maxpr[]$
in place of $\varphi\in PR$.
However, this does not really add: an easy argument shows
that if $\phi\in\maxpr[]$ and $W_j\subseteq domain(\phi)$ is
infinite then there exists an infinite $W_k\subseteq W_j$
and $\varphi_i\in PR$ such that $W_k\subseteq domain(\varphi_i)$ and
$\varphi_i(x)\leq\phi(x)$ for all
$x\in domain(\varphi_i)$.
Moreover, $k$ and $i$ can be given by total recursive functions
depending on $j$ and a code for $\phi$ in $\maxpr[]$.
\\
This also holds uniformly: replace $\varphi$ by a functional
$\Phi\in\pc[]$.
\medskip\\
{\bf 2.}
Of course, Lemma \ref{l:ggK}  cannot hold for
$\phi\in\minpr[]$ since $K$ is itself in $\minpr[]$.
\end{remark}
%
%
\subsection{Comparing $K$ and $\kmax[]$ \`a la Barzdins}
\label{ss:KKmaxalabarzdins}
%
In this subsection and the next one, we now come to central
results of the paper, namely,
{\em \\\indent - $K$ can be compared to $\kmax[],\kmin[]$ via
the $\lless[]\,$ and $\lless[]\uparrow$ orderings,
\\\indent - $\kmax[],\kmin[]$ can be compared via the
$\often[]{\uparrow}$ relation.}
\medskip\\
\begin{notation}\label{not:cupdense}
We shall write $X$ is $({\cal C}_1\cup{\cal C}_2,{\cal D})$-dense
to mean $X$ is $({\cal C}_1,{\cal D})$-dense and
$({\cal C}_2,{\cal D})$-dense.
\end{notation}
\begin{remark}\label{rk:cupdense}
Let ${\cal C}_1\vee{\cal C}_2$ be the family of sets
$R_1\cup R_2$ where $R_1\in{\cal C}_1$ and $R_2\in{\cal C}_2$.
\\
If ${\cal C}_1,{\cal C}_2$ both contain the empty set
(which is usually the case), then
${\cal C}_1\cup{\cal C}_2\subseteq{\cal C}_1\vee{\cal C}_2$,
and therefore $({\cal C}_1\vee{\cal C}_2,{\cal D})$-density
(resp. constructive density) always implies
$({\cal C}_1\cup{\cal C}_2,{\cal D})$-density
(resp. constructive density).
\\
Conversely, every infinite set in ${\cal C}_1\vee{\cal C}_2$
contains an infinite subset in ${\cal C}_1$ or in ${\cal C}_2$,
so that $({\cal C}_1\cup{\cal C}_2,{\cal D})$-density implies
--- hence is equivalent to ---
$({\cal C}_1\vee{\cal C}_2,{\cal D})$-density. However, this is
no more true as concerns {\em constructive density}: if
$R_1\cup R_2$ is infinite one cannot decide (from codes)
which one of $R_1$ and $R_2$ is infinite.
\end{remark}
\begin{lemma}\label{l:ggKmax}$\\ $
{\bf 1.}
Suppose $\phi:\bbbn\to\bbbn$ is a total function in $\minpr[]$
which is monotone and tends to $+\infty$.
Then the set $\{x:\kmax[](x)<\phi(K(x))\}$ is constructively
$\ (\Sigma^0_1\cup\Pi^0_1,
\exists^{\leq\phi}(\Sigma^0_1\wedge\Pi^0_1))$-dense
(cf. Def.\ref{def:DEDense} Point 3).
\\
Moreover, this result is uniform in $\phi$.
In fact, let $(\phi_i)_{i\in\bbbn}$ and $(W_i)_{i\in\bbbn}$ be
acceptable enumerations of $\minpr[]$ and r.e. subsets of $\bbbn$.
There are total recursive functions
$\theta_0,\theta_1:\bbbn^2\to\bbbn$ such that, for all $i,j,k$,
with the notations of Def.\ref{def:Dclass},
if $\phi_i\in\minpr[]$ is total, monotone and tends to $+\infty$
then
\begin{eqnarray*}
W_j\mbox{ is infinite}&\Rightarrow&
W^{\truc}_{\theta_0(i,j)}\mbox{ is an infinite subset of}\\
&&W_j\cap\{x:\kmax[](x)<\phi(K(x))\}
\\
\bbbn\setminus W_k\mbox{ is infinite}&\Rightarrow&
W^{\truc}_{\theta_1(i,k)}\mbox{ is an infinite subset of}\\
&&(\bbbn\setminus W_k)\cap\{x:\kmax[](x)<\phi(K(x))\}
\end{eqnarray*}
{\bf 2.}
Consider Kolmogorov relativizations $K^A,\kmax[A]$ obtained
from second order Kolmogorov complexities ${\cal K},\calkmax[]$
(cf. Thm.\ref{thm:unifVersusOracle})
and enumerations $(\phi^A_i)_{i\in\bbbn}$ and $(W^A_i)_{i\in\bbbn}$
of $\minpr[A]$ and $A$-r.e. sets which come from acceptable
enumerations of functionals in
$\minpc[\bbbn\times P(\bbbn)\to\bbbn]$ and of
$\Sigma^0_1$ subsets of $\bbbn\times P(\bbbn)$
(cf. Prop.\ref{p:uniform}).
We shall also use notations from Def.\ref{def:Dclass}.
\\
Point 1 relativizes uniformly, i.e., the above total
recursive functions $\theta_0,\theta_1:\bbbn^2\to\bbbn$ can be
taken so as to satisfy {\em all} possible relativized conditions.
I.e., if $\phi^A_i\in\minpr[A]$ is total, monotone and tends to
$+\infty$ then
\begin{eqnarray*}
W^A_j\mbox{ is infinite}&\Rightarrow&
W^{\trucA}_{\theta_0(i,j)})\mbox{ is an infinite subset of}\\
&&W^A_j\cap\{x:\kmax[A](x)<\phi^A(K^A(x))\}
\\
\bbbn\setminus W^A_k\mbox{ is infinite}&\Rightarrow&
W^{\trucA}_{\theta_1(i,k)}\mbox{ is an infinite subset of}\\
&&(\bbbn\setminus W^A_k)\cap\{x:\kmax[A](x)<\phi^A(K^A(x))\}
\end{eqnarray*}
\end{lemma}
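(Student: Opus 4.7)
My plan is to adapt the Barzdins-style argument of Lemma \ref{l:ggK} to the $\kmax[]$-versus-$\phi\circ K$ setting, working at the functional level throughout so that point 2 will follow at once from point 1 via the acceptability of the underlying enumerations.

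I would first record the syntactic form of the target set. By Prop \ref{p:syntax}, the graphs of $\kmax[]$, $K$, and of $\phi\in\minpr[]$ each lie in $\Sigma^0_1\wedge\Pi^0_1$, so
\[
\kmax[](x) < \phi(K(x)) \iff \exists y, z, w\,\bigl[\kmax[](x) = y \wedge K(x) = z \wedge \phi(z) = w \wedge y < w\bigr]
\]
is of the form $\exists^{\leq \mu}(\Sigma^0_1\wedge\Pi^0_1)$ with $\mu(||x||) = O(\log x)$; after replacing $\phi$ by a suitable translate in the $\minpr[]$-enumeration, this bound is absorbed into $\phi_i(||x||)$ for almost all $x$. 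Intersection with $W_j\in\Sigma^0_1$, or with $\bbbn\setminus W_k\in\Pi^0_1$, preserves the class $\exists^{\leq\phi_i}(\Sigma^0_1\wedge\Pi^0_1)$, and the parametrization property of Def \ref{def:acceptable} then furnishes the total recursive $\theta_0(i,j)$ and $\theta_1(i,k)$ indexing these sets.

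The main step is to prove that these intersections are actually infinite. In analogy with the partial recursive $\alpha$ of Lemma \ref{l:ggK}, I would build a partial functional $\alpha(i,j,\ttp,A)$ in $\maxpc[]$ that, on input $\ttp$ of length $n$, enumerates $W_j^A$ and the approximations ${\cal K}^t(\cdot\,||\,A)$ from Prop \ref{p:approxK} together with the analogous $\phi^{A,t}_i$ for $\phi^A_i\in\minpr[A]$. At each stage $t$ the program records those $x\in W_{j,t}^A$ currently certified by $\phi^{A,t}_i({\cal K}^t(x\,||\,A)) > 2n$, and outputs the current smallest such $x$ whose certification has persisted since some earlier stage; the max over stages is the return value. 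Monotonicity of $\phi_i$ together with the decreasing nature of ${\cal K}^t$ and $\phi^{A,t}_i$ ensures that certifications of ``true positives'' (those $x$ with $\phi_i^A(K^A(x))>2n$) survive to the limit, while ``false positives'' (those with $K^A(x)\leq(\phi_i^A)^{-1}(2n+O(1))$) are finite in both number and value and therefore cannot drive the max to infinity. Optimality of $\calUmax[]$ (Thm \ref{thm:MaxMinUniformInvariance}) then gives $\calkmax[](\alpha(i,j,\ttp,A)\,||\,A)\leq n + c_{i,j}$, while by construction $\phi_i^A(K^A(\alpha))>2n>n+c_{i,j}$ for large enough $\ttp$; distinct $\ttp$'s of increasing length produce distinct outputs, yielding an infinite range inside the target set. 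The $\Pi^0_1$ case is handled by the same max-program run over $\bbbn$ in place of $W_j^A$, filtering out at stage $t$ any candidate that has been enumerated into $W_k^A$ by time $t$; the filter condition contributes the $\Pi^0_1$ clause of the witnessing $\Sigma^0_1\wedge\Pi^0_1$ predicate.

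Since every ingredient --- $\minpc[]$, $\maxpc[]$, the uniform approximation of Prop \ref{p:approxK}, and the acceptable enumerations of $\Sigma^0_1$-functionals --- is manipulated on the functional side (Prop \ref{p:uniform}, Def \ref{def:acceptable}), the functions $\theta_0,\theta_1$ come out total recursive and independent of $A$, and point 2 follows from point 1 together with Thm \ref{thm:unifVersusOracle}. The hardest part, I expect, is the choreography of the three simultaneously decreasing approximations of ${\cal K}$, $\phi_i$, and the enumeration of $W_j^A$ inside the max-program: one must both cap the value of spurious (false-positive) outputs so that the max is finite, and ensure that as $|\ttp|$ grows the program genuinely reaches new elements of $W_j^A$ so that infinity of the range is maintained. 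This is strictly more delicate than the two-approximation bookkeeping of Lemma \ref{l:ggK}, and is where the detailed verification will concentrate.
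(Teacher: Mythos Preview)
Your overall strategy matches the paper's: build a function $\alpha\in\maxpr[]$ (as $\max_t a(\cdot,t)$ for partial recursive $a$) that lands in $W_j$ (resp.\ $\bbbn\setminus W_k$) at points with $\phi_i(K(\cdot))>2|\ttp|$, then invoke optimality of $\kmax[]$ to get $\kmax[](\alpha(i,j,\ttp))\leq|\ttp|+O(1)<2|\ttp|<\phi_i(K(\alpha(i,j,\ttp)))$. The passage to point 2 via the functional level is also right.

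However, your specific stage function has a gap. Outputting the \emph{smallest} currently (or persistently) certified element at each stage does not ensure that $\max_t a(\cdot,t)$ is itself a true positive. Concretely: if $W_j$ first enumerates a large false positive $y$ before any smaller true positive $x^*$, the stage output is $y$ at early stages (it is the only candidate) and later drops to $x^*$; but then $\max_t a(\cdot,t)=y$, and $y$ may well have $\phi_i(K(y))\leq 2|\ttp|$, destroying the key inequality. Your observation that false positives are bounded in value guarantees only that the max is finite, not that it lands on a true positive.

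The paper's fix is to make $a_0(i,j,\ttp,t)$ \emph{monotone increasing in $t$}: start at the first enumerated element of $W_j$, and whenever the current certification fails (i.e.\ $\varphi_{\xi(i)}(K^t(a_0),t)\leq 2|\ttp|$), move to the next enumerated element of $W_j$ strictly \emph{larger} than the current one. Then $\max_t a_0=\lim_t a_0$, and the limit must be a true positive because (i) true positives never lose certification --- this is exactly where monotonicity of $\phi_i$ is used, via $\varphi_{\xi(i)}(K^t(x),t)\geq\phi_i(K^t(x))\geq\phi_i(K(x))$ --- and (ii) there are only finitely many false positives, so only finitely many moves occur. The $\Pi^0_1$ case uses an analogous increasing $a_1$ that also advances whenever the current candidate appears in $W_{k,t}$.

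A smaller point: your opening syntactic paragraph tries to put the whole intersection $W_j\cap\{x:\kmax[](x)<\phi_i(K(x))\}$ into $\exists^{\leq\phi_i}(\Sigma^0_1\wedge\Pi^0_1)$; the bound-matching there is hand-wavy. The paper sidesteps this by exhibiting the \emph{range} of $\alpha_\epsilon$ directly as $\{y:\exists\ttp\ (|\ttp|<\phi_i(y)\wedge|\ttp|>\eta_\epsilon(i,j)\wedge y=\alpha_\epsilon(i,j,\ttp))\}$, which is visibly $\exists^{\leq\phi_i}(\Sigma^0_1\wedge\Pi^0_1)$ by Prop.~\ref{p:syntax} and already serves as the required infinite subset.
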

\begin{proof}
{\em 1. The strategy.}\\
We essentially keep the strategy of the proof of Lemma
\ref{l:ggK}.
The idea is, for given $i,j$, to construct a $\maxpr[]$ function
$\alpha:\bbbn^2\times\words$ such that $\alpha(i,j,\ttp)$ is in
$W_j$ (or in $\bbbn\setminus W_j$) and
$\varphi_i(K(\alpha(i,j,\ttp)))>2\,|\ttp|$.
Then to use inequalities
$$\kmax[]\leqct K_{\ttp\mapsto\alpha(i,j,\ttp)}\ \ ,\ \
K_{\ttp\mapsto\alpha(i,j,\ttp)}(\alpha(i,j,\ttp))\leqct |\ttp|$$
to get an inequality $\kmax[](\alpha(i,j,\ttp))\leqct|\ttp|$
from which
$\kmax[](\alpha(i,j,\ttp))<\varphi_i(K(\alpha(i,j,\ttp)))$
can be deduced.
\medskip\\
As we have to deal with $\Sigma^0_1$ sets and with $\Pi^0_1$ sets,
i.e. sets of the form $W_j$ or $\bbbn\setminus W_k$, we shall define
two such functions $\alpha$, namely $\alpha_0,\alpha_1$.
\\
In order to get these functions in $\maxpr[]$, we  define
partial recursive functions
$a_0,a_1:\bbbn^2\times\words\times\bbbn\to\bbbn$
and set $\max a_0=\alpha_0$ and $\max a_1=\alpha_1$.
\medskip\\
{\em 2. Approximation of $\phi_i$ from above.}\\
Let $\phi_i(x)=\min_t\varphi_i(x,t)$ where
$(\varphi_i)_{i\in\bbbn}$ is an acceptable enumeration of
$PR^{\bbbn\times\bbbn\to\bbbn}$.
\\
Using the parametrization theorem, let $\xi:\bbbn\to\bbbn$ be a
total recursive function such that $\varphi_{\xi(i)}$ has domain
$\{x:\exists u\ \varphi_i(x,t)\mbox{ does halt}\}\times\bbbn$
and satisfies
\begin{eqnarray*}
\varphi_{\xi(i)}(x,0)
&=&\varphi_i(x,u)\mbox{ where $u$ is least such that }
                \varphi_i(x,u) \mbox{ does halt}
\\
\varphi_{\xi(i)}(x,t+1)
&=&\min(\{\varphi_{\xi(i)}(x,0)\}\cup
\{\varphi_i(x,v): v\leq t\ \mbox{and}\\
&&\hspace{4cm}\varphi_i(x,v)\mbox{ halts in $\leq t$ steps}\})
\end{eqnarray*}
Observe that $\varphi_{\xi(i)}(x,t)$ is decreasing in $t$ and
$\phi_i(x)=\min_t\varphi_{\xi(i)}(x,t)$, so that
$\varphi_{\xi(i)}(x,t)$ is a partial recursive approximation of
$\phi_i(x)$ from above.
\\
Also, for any given $i,x$,
either $\phi_i(x)$ is undefined and $\varphi_{\xi(i)}(x,t)$
is defined for no $t$
or $\phi_i(x)$ is defined and $\varphi_{\xi(i)}(x,t)$
is defined for all $t$.
\medskip\\
{\em 3. Functions $a_\epsilon$ and $\alpha_\epsilon$.}\\
Denote $W_{j,t}$ the finite subset of $W_j$ obtained
after $t$ steps of its standard enumeration.
Denote $K^t(x)$ some total, recursive approximation of $K(x)$
from above which is decreasing in $t$ (cf. Prop.\ref{p:approxK}).
\\
We define $a_0,a_1$ as follows:
\begin{eqnarray*}
a_0(i,j,\ttp,0)
&=&\mbox{the element which appears first in the standard}\\
&&\mbox{ enumeration of $W_j$ (hence undefined if $W_j$ is empty)}
\\
&&\mbox{}\\
a_0(i,j,\ttp,t+1)
&=&\left\{
\begin{array}{ll}
a_0(i,j,\ttp,t)
&\mbox{if }\varphi_{\xi(i)}(K^t(a_0(i,j,\ttp,t)),t)>2\,|\ttp|
\medskip\\
x&\mbox{if }\varphi_{\xi(i)}(K^t(a_0(i,j,\ttp,t)),t)\leq2\,|\ttp|\\
&\mbox{and $x$ is the next element which}\\
&\mbox{appears in the standard enumeration}\\
&\mbox{of $W_i$ and satisfies }x>a_0(i,j,\ttp,t)
\\
undefined&\mbox{if }\varphi_{\xi(i)}(K^t(a_0(i,j,\ttp,t)),t)
\mbox{ is undefined}
\end{array}\right.
\end{eqnarray*}
and
\begin{eqnarray*}
a_1(i,k,\ttp,0)&=&0
\medskip\\
a_1(i,k,\ttp,t+1)
&=&\left\{
\begin{array}{ll}
a_1(i,k,\ttp,t)
&\mbox{if }\varphi_{\xi(i)}(K^t(a_1(i,j,\ttp,t)),t)>2\,|\ttp|\\
&\mbox{and }a_1(i,k,\ttp,t)\notin W_{k,t}
\medskip\\
a_1(i,k,\ttp,t)+1
&\mbox{if }\varphi_{\xi(i)}(K^t(a_1(i,j,\ttp,t)),t)\leq2\,|\ttp|\\
&\mbox{or }(\varphi_{\xi(i)}(K^t(a_1(i,j,\ttp,t)),t)>2\,|\ttp|\\
&\mbox{and }a_1(i,k,\ttp,t)\in W_{k,t})
\\
undefined&\mbox{if }\varphi_{\xi(i)}(K^t(a_1(i,k,\ttp,t)),t)
\mbox{ is undefined}
\end{array}\right.
\\ &&\mbox{}
\end{eqnarray*}
{\bf Claim. }{\em
Suppose $\phi_i\in\minpr[]$ is total monotone increasing and
tends to $+\infty$.}
\\
{\bf a.\ }{\em If $W_j$ is infinite then
$(\ttp,t)\mapsto a_0(i,j,\ttp,t)$ and
$\ttp\mapsto\alpha_0(i,j,\ttp)$ are total functions and
$$\forall \ttp\ (\alpha_0(i,j,\ttp)\in W_j\
\wedge\ \phi_i(K(\alpha_0(i,j,\ttp)))>2\,|\ttp|)$$}
{\bf b.\ }{\em Function $a_1$ is always total.
If $\bbbn\setminus W_k$ is infinite then
$\ttp\mapsto\alpha_1(i,k,\ttp)$ is a total function and
$$\forall \ttp\ (\alpha_1(i,k,\ttp)\notin W_k\
\wedge\ \phi_i(K(\alpha_1(i,k,\ttp)))>2\,|\ttp|)$$}
{\em Proof of Claim. }\\
As seen in 2 above, if $\phi_i$ is total so is $\varphi_{\xi(i)}$.
This insures the total character of $a_0$ (resp. $a_1$).
\\
Fix some $\ttp$. Since $\varphi_{\xi(i)}(x,t)\geq\phi_i(x)$,
$\phi_i$ is monotone increasing and $K,\phi_i$ are total and
tend to $+\infty$, for all large enough $x$ and all $t$, we have
$$\varphi_{\xi(i)}(K^t(x),t)\geq\phi_i(K^t(x))
\geq\phi_i(K(x))>2\,|\ttp|$$
Suppose $W_j$ is infinite. Then there are elements in $W_j$
which satisfy $\phi_i(K(x))>2\,|\ttp|$.
Let $x_0(i,j,\ttp)$ be such an element which appears first in the
standard enumeration of $W_j$.
It is easy to see that, for all $t$ large enough, we have
$a_0(i,j,\ttp,t)=x_0(i,j,\ttp)$.
Thus, $\alpha_0(i,j,\ttp)=x_0(i,j,\ttp)$ is defined and
$\alpha_0(i,j,\ttp)\in W_j\cap\{\phi_i(K(x))>2\,|\ttp|\}$.
Which proves Point a of the Claim.
\medskip\\
Suppose $\bbbn\setminus W_k$ is infinite. Then there are elements
in $\bbbn\setminus W_k$ which satisfy $\phi_i(K(x))>2\,|\ttp|$.
Let $x_1(i,k,\ttp)$ be the least such element.
It is easy to see that, for all $t$ large enough (namely, for $t$
such that $W_k\cap[0,x_1(i,k,\ttp)[\subseteq W_{k,t}$), we have
$a_1(i,k,\ttp,t)=x_1(i,k,\ttp)$.
Thus, $\alpha_1(i,k,\ttp)=x_1(i,k,\ttp)$ is defined and
$\alpha_1(i,k,\ttp)\in
(\bbbn\setminus W_k)\cap\{\phi_i(K(x))>2\,|\ttp|\}$.
Which proves Point b of the Claim.\hfill{$\Box$ \em (Claim)}
\medskip\\
{\em 4. Functions $\eta_\epsilon,\theta_\epsilon$.}\\
Let $(\psi_n)_{n\in\bbbn}$ be an acceptable enumeration of
partial recursive functions $\words\times\bbbn\to\bbbn$.
Since $a_0,a_1:\bbbn^2\times\words\times\bbbn\to\bbbn$ are partial
recursive, the parametrization property insures that there exists
total recursive functions $\eta_0,\eta_1:\bbbn^2\to\bbbn$ such that,
for all $i,j,k,\ttp,t$ and $\epsilon=0,1$,
$$a_\epsilon(i,j,\ttp,t)=\psi_{\eta_\epsilon(i,j)}(\ttp,t)$$
Taking the $\max$ over $t$, and letting
$\alpha_\epsilon=\max a_\epsilon$, we get, for all $i,j,\ttp$,
$$\alpha_\epsilon(i,j,\ttp,t)
=(\max\psi_{\eta_\epsilon(i,j)})(\ttp)$$
For all $i,j,k$, set
$$Y_\epsilon(i,j)=\{\alpha_\epsilon(i,j,\ttp) :
(i,j,\ttp)\in domain(\alpha_\epsilon)\
\wedge\ |\ttp|>\eta_\epsilon(i,j)\}$$
Using the Claim and inequality $K(y)\leq y$
(which we always can suppose), observe that
\begin{eqnarray*}
y\in Y_\epsilon(i,j)&\Leftrightarrow&
\exists\ttp\ (2\,|\ttp|<\phi_i(K(y))\
\wedge\ |\ttp|>\eta_\epsilon(i,j)
\ \wedge y=\alpha_\epsilon(i,j,\ttp))
\\
&\Leftrightarrow&
\exists\ttp\ (|\ttp|<\phi_i(y)\
\wedge\ |\ttp|>\eta_\epsilon(i,j)
\ \wedge y=\alpha_\epsilon(i,j,\ttp))
\end{eqnarray*}

Using Prop.\ref{p:syntax}, we see that this is $\truc$ in
$i,j,y$ (cf. Def.\ref{def:existsleq}).
We let $\theta_0,\theta_1:\bbbn^2\to\bbbn$ be total recursive
functions such that
$$W^{\truc[\bbbn]}_{\theta_\epsilon(i,j)}=Y_\epsilon(i,j)$$
{\em 5. Point 1 of the Lemma.}\\
Let $U:\words\times\bbbn\to\bbbn$ be such that
$U(0^n1\ttp,t)=\psi_n(\ttp,t)$ and $U(0^n,t)=\psi_n(\lambda,t)$
(where $\lambda$ is the empty word).
Taking the $\max$ over $t$, we get
$(\max U)(0^n1\ttp)=(\max\psi_n)(\ttp)$ and
$(\max U)(0^n)=(\max\psi_n)(\lambda)$.
Since the $\max\psi_n$'s enumerate $\maxpr[\words\to\bbbn]$,
the invariance theorem \ref{thm:invarianceMinMax} insures that
$\max U$ is optimal in $\maxpr[\words\to\bbbn]$.
Thus, we can (and shall) suppose that $\kmax[]=K_{(\max U)}$.
\\
Since $\alpha_\epsilon(i,j,\ttp)
=(\max\psi_{\eta_\epsilon(i,j)})(\ttp)
=(\max U)(0^{\eta_\epsilon(i,j)}1\ttp)$, we get
\begin{eqnarray*}
\kmax[](\alpha_\epsilon(i,j,\ttp))
&=&K_{(\max U)}((\max U)(0^{\eta_\epsilon(i,j)}1\ttp)\\
&\leq&\eta_\epsilon(i,j)+1+|\ttp|\\
&\leq&2\,|\ttp|\hspace{1cm}\mbox{in case }|\ttp|>\eta_\epsilon(i,j)
\end{eqnarray*}
Suppose $\phi_i$ is total, monotone and tends to $+\infty$ and
$W_j$ (resp. $\bbbn\setminus W_k)$ is infinite.
Using the last inequality and that from the above Claim relative
to $\epsilon=0$ (resp. $\epsilon=1$),
we see that, for $|\ttp|>\eta_\epsilon(i,j)$, we have
$$K_{(\max U)}(\alpha_\epsilon(i,j,\ttp))
\leq2\,|\ttp|<\phi_i(K(\alpha_\epsilon(i,j,\ttp))$$
Which proves that $W^{\truc[\bbbn]}_{\theta_\epsilon(i,j,k)}$
is included in $\{x:\kmax[](x)<\phi_i(K(x))\}$.
Using the Claim again, this set is also included in
$W_j$ (resp. $(\bbbn\setminus W_k)$.
This finishes the proof of Point 1 of the Lemma.
\medskip\\
{\em 6. Point 2 of the Lemma.}\\
The proof is similar to that of Point 1. Just add everywhere a
second order argument $A$ varying in $P(\bbbn)$
and use the parametrization property of Def.\ref{def:acceptable}.
Thus, $a_0,a_1$ are now partial computable functionals
\\\centerline{$\bbbn^2\times\words\times P(\bbbn)\times\bbbn
\to\bbbn$}
The enumeration $(\psi_n)_{n\in\bbbn}$ now becomes an enumeration
$(\Psi_n)_{n\in\bbbn}$ of the partial computable functionals
$\words\times P(\bbbn)\to\bbbn$.
The total recursive functions $\eta_\epsilon,\theta_\epsilon$ are
now such that
\begin{eqnarray*}
a_\epsilon(i,j,\ttp,A,t)&=&\Psi_{\eta_\epsilon(i,j)}(\ttp,A,t)
\\
\alpha_\epsilon(i,j,\ttp,A)
&=&(\max\Psi_{\eta_\epsilon(i,j)})(\ttp,A)
\\
W^A_{\theta_\epsilon(i,j,j)}&=&\{\alpha_\epsilon(i,j,\ttp,A):
(i,j,\ttp,A)\in domain(\alpha_\epsilon)\
\wedge\ |\ttp|>\eta_\epsilon(i,j)\}
\end{eqnarray*}
and $U$ has to be changed to
${\cal U}\in PC^{P(\bbbn)\times\words\to\bbbn}$ such that
${\cal K}(\ ||\ )={\cal K}_{\cal U}$.
\\
The arguments for the proof of Point 1 above go through with the
superscript $A$ everywhere.
\end{proof}
%
%
\subsection{Comparing $K$ and $\kmin[]$ \`a la Barzdins}
\label{ss:KKminalabarzdins}
%
We shall need the following notion to get an analog of Lemma
\ref{l:ggKmax} with $\kmin[]$.
\begin{definition}\label{def:recbounded}
The growth function of an infinite set $X\subseteq\bbbn$
is defined as $$growth_X(n)=(n+1)\mbox{-th point of }X$$
The infinite set $X$ has recursively bounded growth if
$growth_X\leq\psi$ for some total recursive function
$\psi:\bbbn\to\bbbn$.
\end{definition}
\begin{lemma}\label{l:ggKmin}$\\ $
{\bf 1.}
Let's denote $\widetilde{\Pi^{0,A}_1}$ the family of infinite
$\Pi^{0,A}_n$ subsets of $\bbbn$ with $A$-recursively bounded
growth.\\
Suppose $\phi:\bbbn\to\bbbn$ is a total function in $\minpr[]$
which is monotone and tends to $+\infty$.
Then the set $\{x:\kmin[](x)<\phi(K(x))\}$ is constructively
$\ (\Sigma^0_1\cup\widetilde{\Pi^0_1},
\exists^{\leq\phi}(\Sigma^0_1\wedge\Pi^0_1))$-dense
(cf. Def.\ref{def:DEDense} Point 3).
\\
Moreover, this result is uniform in $\phi$ and in a recursive
$\psi$ bound for the $\Pi^0_1$ set.
In fact, let $(\phi_i)_{i\in\bbbn}$, $(\psi_m)_{m\in\bbbn}$ and
$(W_i)_{i\in\bbbn}$ be acceptable enumerations of $\minpr[]$, $PR$
and of r.e. subsets of $\bbbn$.
There are total recursive functions
$\theta_0:\bbbn^2\to\bbbn$ and $\theta_1:\bbbn^3\to\bbbn$ such that,
for all $i,j,m,k$, with the notations of Def.\ref{def:Dclass},
if $\phi_i\in\minpr[]$ and $\psi_m\in PR$ are total, monotone
and tend to $+\infty$ then
\begin{eqnarray*}
W_j\mbox{ is infinite}&\Rightarrow&
W^{\truc}_{\theta_0(i,j)}\mbox{ is an infinite subset of}\\
&&W_j\cap\{x:\kmin[](x)<\phi_i(K(x))\}
\end{eqnarray*}
\indent$\bbbn\setminus W_k\mbox{ is infinite and }
\psi_m\geq growth_{\bbbn\setminus W_k}$
\medskip

\hspace{3cm}
$\Rightarrow\ W^{\truc}_{\theta_1(i,k,m)}$ is an infinite subset of
\medskip

\hspace{4cm}
$(\bbbn\setminus W_k)\cap\{x:\kmin[](x)<\phi_i(K(x))\}$
\medskip\\
{\bf 2.}
Consider Kolmogorov relativizations $K^A,\kmin[A]$ and enumerations
$(\phi^A_i)_{i\in\bbbn}$ and $(\psi^A_m)_{m\in\bbbn}$ and
$(W^A_i)_{i\in\bbbn}$ of $\minpr[A]$, $\PR[A]$
and $A$-r.e. sets  as in Point 2 of Lemma \ref{l:ggKmax}.
\\
Point 1 relativizes uniformly, i.e., the above total
recursive functions $\theta_0,\theta_1:\bbbn^2\to\bbbn$ can be
taken so as to satisfy {\em all} possible relativized conditions.
I.e., if $\phi^A_i,\psi^A_m$ are total, monotone and tend
to $+\infty$ then
\begin{eqnarray*}
W^A_j\mbox{ is infinite}&\Rightarrow&
W^{\trucA}_{\theta_0(i,j)}\mbox{ is an infinite subset of}\\
&&W^A_j\cap\{x:\kmin[A](x)<\phi^A_i(K^A(x))\}
\end{eqnarray*}
\indent$\bbbn\setminus W^A_k\mbox{ is infinite and }
\psi^A_m\geq growth_{\bbbn\setminus W^A_k}$
\medskip

\hspace{3cm}
$\Rightarrow\ W^{\trucA}_{\theta_1(i,k,m)}$ is an infinite subset of
\medskip

\hspace{4cm}
$(\bbbn\setminus W^A_k)\cap\{x:\kmin[A](x)<\phi^A(K^A(x))\}$
\end{lemma}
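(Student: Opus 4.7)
The plan is to mirror the proof of Lemma~\ref{l:ggKmax}, substituting a $\minpr[]$-based descent for the $\maxpr[]$-based ascent. In both cases I would construct a partial recursive $a_\epsilon$ such that $\alpha_\epsilon = \min a_\epsilon \in \minpr[]$, with outputs belonging to the prescribed set and satisfying $\phi_i(K(\alpha_\epsilon(\cdot))) > 2|\ttp|$. The approximation-from-above construction $\xi$ of Lemma~\ref{l:ggKmax} is reused: because $\phi_i$ is monotone, one has $\varphi_{\xi(i)}(K^t(y),t) \geq \phi_i(K^t(y)) \geq \phi_i(K(y))$ for all $t$, so the testable condition $\varphi_{\xi(i)}(K^t(y),t) > 2|\ttp|$ always implies the true inequality $\phi_i(K(y)) > 2|\ttp|$.

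For case $\Sigma^0_1$ (infinite $W_j$), I would take $a_0(i,j,\ttp,t)$ to be the least $y \in W_{j,t}$ satisfying $\varphi_{\xi(i)}(K^t(y),t) > 2|\ttp|$, if any. Every value it outputs is automatically in $W_j$ and has $\phi_i(K(\cdot)) > 2|\ttp|$, so $\alpha_0 = \min a_0$ inherits both properties. The $\minpr[]$ invariance theorem (Thm.~\ref{thm:invarianceMinMax}) then yields $\kmin[](\alpha_0(i,j,\ttp)) \leq |\ttp| + c$, whence $\kmin[] < \phi_i(K)$ on the range for $|\ttp|$ above a suitable $\eta_0(i,j)$.

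For case $\Pi^0_1$ with growth bound $\psi_m$, the idea is a \emph{descent algorithm} parameterized by an integer $N$: starting from $\psi_m(N)$, at each step stay at the current $y$ if $y \notin W_{k,t}$ and $\varphi_{\xi(i)}(K^t(y),t) > 2|\ttp|$, else decrement to $y-1$. A \emph{truly good} $y$ (one with $y \notin W_k$ and $\phi_i(K(y)) > 2|\ttp|$) is a permanent stay state, so if the interval $[0,\psi_m(N)]$ contains such a $y$, the algorithm stabilizes at the largest one, and taking the min over the time axis recovers it. The growth bound $\psi_m \geq \mathrm{growth}_{\bbbn \setminus W_k}$ guarantees at least $N+1$ elements of $\bbbn \setminus W_k$ below $\psi_m(N)$, so for sufficiently large $N$ truly good $y$'s are present.

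The main obstacle is that the smallest adequate $N$ depends on $\phi_i^{-1}(2|\ttp|)$, which is not uniformly accessible since $\phi_i \in \minpr[]$ can grow arbitrarily slowly. I would resolve this by doing an implicit $\minpr[]$ search over $N$: define $\alpha_1(i,k,m,\ttp)$ as the $\min$ (over both $N$ and the inner time axis) of the descent-algorithm output started at $\psi_m(N)$, arranging through a liveness check that a value is emitted only after it has persisted long enough, so that non-stabilizing runs remain undefined. The resulting $\alpha_1$ lies in $\minpr[]$ with $\kmin[](\alpha_1(\ttp)) \leq |\ttp|+c$. The set $W_{\theta_1(i,k,m)}$ is then the range of $\alpha_1$ intersected with the $\Pi^0_1$ filter $y \notin W_k$, which discards any stray output falling into $W_k$; its complexity $\exists^{\leq \phi_i}(\Sigma^0_1 \wedge \Pi^0_1)$ follows from Prop.~\ref{p:syntax} applied to the graph of $\alpha_1$ together with the bound $|\ttp| < \phi_i(y)$ on the existential quantifier, exactly as in Lemma~\ref{l:ggKmax}. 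Point~2 (uniform relativization) is obtained by replacing partial recursive functions with partial computable functionals and carrying the second-order parameter $A$ through all constructions, so that the total recursive $\theta_0,\theta_1$ depend on neither $A$ nor the chosen uniform enumerations.
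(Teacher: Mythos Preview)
Your $\Sigma^0_1$ case is sound and in fact a little cleaner than the paper's: since the testable inequality $\varphi_{\xi(i)}(K^t(y),t)>2|\ttp|$ always implies the true one $\phi_i(K(y))>2|\ttp|$, every value emitted by $a_0$ already lies in the target set, so $\alpha_0=\min_t a_0$ does too without any counting argument.

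The $\Pi^0_1$ case, however, has a genuine gap. Your ``implicit $\minpr[]$ search over $N$'' with a ``liveness check'' is not a construction: there is no recursive way to detect whether the descent from $\psi_m(N)$ stabilizes, and if you simply take $\min$ over all $(N,t)$ of whatever $a_1$ outputs along the way, you will pick up spurious small values from non-stabilizing runs started at small $N$. Your post-hoc $\Pi^0_1$ filter $y\notin W_k$ does not repair this: for each $\ttp$ where $\alpha_1(\ttp)$ lands in $W_k$ you lose the $\kmin[]$ bound on that $\ttp$ entirely, and nothing you have said prevents this from happening for cofinitely many $\ttp$, leaving the output set finite.

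The paper avoids the search altogether by fixing the starting point deterministically as $a_1(i,k,m,\ttp,0)=\psi_m(2^{2|\ttp|+1})$, which is recursive in $\ttp$ once $\psi_m$ is total. The growth hypothesis guarantees at least $2^{2|\ttp|+1}$ elements of $\bbbn\setminus W_k$ in $[0,\psi_m(2^{2|\ttp|+1})]$, and a pigeonhole count on the $2^{2|\ttp|+1}-1$ programs of length $\le 2|\ttp|$ forces one of them to be outside the range of the optimal $U\in\minpr[]$ on such programs. The point you missed is that the choice of $N$ need not depend on $\phi_i^{-1}$ at all --- only on $|\ttp|$. The paper applies the same $2^{2|\ttp|+1}$ device in the $\Sigma^0_1$ case as well, taking $Z_0(i,j,\ttp)$ to be the first $2^{2|\ttp|+1}$ elements enumerated into $W_j$ and descending through that set.
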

\begin{proof}
{\em 1. The strategy.}
The proof follows that of Lemma \ref{l:ggKmax} except that
now $\alpha_\epsilon$ is equal to $\min a_\epsilon$ and that
$a_1$ and $\alpha_1$ also depend on the index $m$ of the recursive
majorant $\psi_m$ of the growth function of the $\Pi^0_1$ set.
\\
Since $\alpha_0$ (resp. $\alpha_1$) has to be in $\minpr[]$, i.e.
is to be recursively approximated {\em from above}, we have to force
that, for given $i,j,k,m,\ttp$, the first defined
$a_0(i,j,\ttp,t)$ (resp. $a_1(i,k,m,\ttp,t)$) majorizes an element
$x$ of $W_j$ (resp. $\bbbn\setminus W_k$) which is such that
$\kmin[](x)<\phi_i(K(x))$.
\\
To insure this, we choose $a_0(i,j,\ttp,0)$
(resp. $a_1(i,k,m,\ttp,0)$) so that the interval
$[0,a_\epsilon(i,j,\ttp,0)[$ (resp. $[0,a_\epsilon(i,k,m,\ttp,0)[$)
contains at least $2^{2|\ttp|+1}$ points in
$\{x:\kmin[](x)<\phi_i(K(x))\}$.
\medskip\\
{\em 2.}
We shall use the partial recursive approximation from above
$\varphi_{\xi(i)}(x,t)$ of $\phi_i(x)$ defined in point 2 of the
proof of Lemma \ref{l:ggKmax}.
\medskip\\
{\em 3. Functions $a_\epsilon$ and $\alpha_\epsilon$.}\\
Let $Z_0(i,j,\ttp)$ be the set of $2^{2|\ttp|+1}$ distinct elements
which appear first in the standard enumeration of $W_i$.
We define $a_0$ as follows:
\begin{eqnarray*}
a_0(i,j,\ttp,0)&=&\mbox{the largest element of }Z_0(i,j,\ttp)
\medskip\\
a_0(i,j,\ttp,t+1)
&=&\left\{
\begin{array}{ll}
a_0(i,j,\ttp,t)
&\mbox{if }\varphi_{\xi(i)}(K^t(a_0(i,j,\ttp,t)),t)>2\,|\ttp|
\medskip\\
x&\mbox{if }\varphi_{\xi(i)}(K^t(a_0(i,j,\ttp,t)),t)\leq2\,|\ttp|\\
&\mbox{and $x$ is the largest element of}\\
&Z_0(i,j,\ttp)\ \cap\ [0,a_0(i,j,\ttp,t)[
\medskip\\
undefined&\mbox{if }\varphi_{\xi(i)}(K^t(a_0(i,j,\ttp,t)),t)
\mbox{ is undefined}
\end{array}\right.
\end{eqnarray*}
\medskip\\
We now define $a_1$, using the recursive majorant $\psi_m$.
\begin{eqnarray*}
a_1(i,k,m,\ttp,0)&=&\psi_m(2^{2|\ttp|+1})
\medskip\\
&&\mbox{Let }u=\varphi_{\xi(i)}(K^t(a_1(i,k,m,\ttp,t)),t)
\medskip\\
a_1(i,k,m,\ttp,t+1)
&=&\left\{
\begin{array}{ll}
a_1(i,k,m,\ttp,t)
&\mbox{if }u>2\,|\ttp|\\
&\mbox{and }a_1(i,k,m,\ttp,t)\notin W_{k,t}
\medskip\\
a_1(i,k,m,\ttp,t)-1
&\mbox{if }u\leq2\,|\ttp|\mbox{ or }(u>2\,|\ttp|\\
&\mbox{ and }a_1(i,k,m,\ttp,t)\in W_{k,t})
\medskip\\
undefined&\mbox{if }u\mbox{ is undefined}
\end{array}\right.\end{eqnarray*}
Clearly, $a_0$ and $a_1$ are partial recursive.
\medskip\\
{\bf Claim. }{\em
Suppose $\phi_i\in\minpr[]$ is total monotone increasing and
tends to $+\infty$.}
\\
{\bf a.\ }{\em If $W_j$ is infinite then
$(\ttp,t)\mapsto a_0(i,j,\ttp,t)$ and
$\ttp\mapsto\alpha_0(i,j,\ttp)$ are total functions and
$$\forall \ttp\ (\alpha_0(i,j,\ttp)\in W_j\
\wedge\ \phi_i(K(\alpha_0(i,j,\ttp)))>2\,|\ttp|)$$}
{\bf b.\ }{\em Function $a_1$ is total.
If $\bbbn\setminus W_k$ is infinite and $\psi_m$ is
a total recursive function such that
$\psi_m\geq growth_{\bbbn\setminus W_k}$
then $\ttp\mapsto\alpha_1(i,k,m,\ttp)$ is a total function and
$$\forall \ttp\ (\alpha_1(i,k,m,\ttp)\notin W_k\
\wedge\ \phi_i(K(\alpha_1(i,k,m,\ttp)))>2\,|\ttp|)$$}
{\em Proof of Claim. }\\
As seen in the proof of Lemma \ref{l:ggKmax}, if $\phi_i$ is total
then so are $\varphi_{\xi(i)}$ and $a_0,a_1$.
Also, for any fixed $\ttp$, for all large enough $x$ and all $t$,
we have
$$\varphi_{\xi(i)}(K^t(x),t)\geq\phi_i(K^t(x))
\geq\phi_i(K(x))>2\,|\ttp|$$
Suppose $W_j$ is infinite. Then $Z_0(i,j,\ttp)$ contains exactly
$2^{2|\ttp|+1}$ elements.
Let $\kmin[]=K_U$ where $U\in\minpr[\words\to\bbbn]$.
Since there are $2^{2|\ttp|+1}-1$ words with length $\leq2\,|\ttp|$,
there is necessarily some element of $x\in Z_0(i,j,\ttp)$ which
is not in $U(\{\ttq:|\ttq|\leq2\,|\ttp|\})$, hence is such that
$\kmin[](x)=K_U(x)>2\,|\ttp|$.
\\
Let $x_0(i,j,\ttp)$ be the largest such element.
It is easy to see that, for all $t$ large enough, we have
$a_0(i,j,\ttp,t)=x_0(i,j,\ttp)$.
Thus, $\alpha_0(i,j,\ttp)=x_0(i,j,\ttp)$ is defined and
$\alpha_0(i,j,\ttp)\in W_j\cap\{\phi_i(K(x))>2\,|\ttp|\}$.
Which proves Point a of the Claim.
\medskip\\
Suppose $\bbbn\setminus W_k$ is infinite and $\psi_m$ is
a total recursive function such that
$\psi_m\geq growth_{\bbbn\setminus W_k}$.
Then there are $2^{2|\ttp|+1}$ elements of $\bbbn\setminus W_k$
which are $\leq\psi_m(2^{2|\ttp|+1})$.
As above, there is necessarily some such element $x$ which is
not in $U(\{\ttq:|\ttq|\leq2\,|\ttp|\})$, hence is such that
$\kmin[](x)=K_U(x)>2\,|\ttp|$.
\\
Let $x_1(i,k,m,\ttp)$ be the largest such element.
It is easy to see that, for all $t$ large enough (namely, for $t$
such that $W_k\cap[0,x_1(i,k,m,\ttp)[\subseteq W_{k,t}$), we have
$a_1(i,j,m,\ttp,t)=x_1(i,j,m,\ttp)$.
Thus, $\alpha_1(i,k,m,\ttp)=x_1(i,k,m,\ttp)$ is defined and
$\alpha_1(i,k,m,\ttp)\in
(\bbbn\setminus W_k)\cap\{\phi_i(K(x))>2\,|\ttp|\}$.
Which proves Point b of the Claim.\hfill{$\Box$ \em (Claim)}
\medskip\\
We conclude the proof of the Lemma as that of Lemma \ref{l:ggKmax}
with analogous points 4,5,6 : the sole modification is to replace
everywhere $\kmax[]$ by $\kmin[]$ and the $\max$ operator by the
$\min$ one.
\end{proof}
%
%
\subsection{Comparing {$\kmin[]$} and {$\kmax[]$} \`a la Barzdins}
\label{ss:KminKmaxOften}

We shall need the following result from \cite{ferbusgrigoKmaxKmin}
(Thm 7.15).

\begin{proposition}\label{p:KKK}
$K\leqct2\,\kmin[]+\kmax[]$.
\end{proposition}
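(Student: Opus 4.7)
The plan is to build, from a shortest $\kmin[]$-program and a shortest $\kmax[]$-program for $x$, a standard (partial recursive) description of $x$ whose length is $2\kmin[](x)+\kmax[](x)+O(1)$, and then invoke the usual invariance theorem for $K$.

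Fix optimal functions $U_{\min}=\min u_{\min}$ in $\minpr[\words\to\bbbn]$ and $U_{\max}=\max u_{\max}$ in $\maxpr[\words\to\bbbn]$, where $u_{\min},u_{\max}:\words\times\bbbn\to\bbbn$ are partial recursive; by the $Min/Max$ invariance theorem (Thm.\ref{thm:invarianceMinMax}) we may assume $\kmin[]=K_{U_{\min}}$ and $\kmax[]=K_{U_{\max}}$. The crucial asymmetric information carried by such programs is: if $U_{\min}(\ttp)=x$ then every defined value $u_{\min}(\ttp,t)$ satisfies $u_{\min}(\ttp,t)\geq x$, with equality for at least one $t$; dually, if $U_{\max}(\ttq)=x$ then every defined value $u_{\max}(\ttq,s)\leq x$, with equality for at least one $s$.

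From this, define a partial recursive function $h:\words\times\words\to\bbbn$ by dovetailed search: $h(\ttp,\ttq)$ is the first value $y$ found such that $u_{\min}(\ttp,t)=u_{\max}(\ttq,s)=y$ for some $t,s$. By the squeeze $u_{\max}(\ttq,s)\leq x\leq u_{\min}(\ttp,t)$ with equality achieved on both sides, if $U_{\min}(\ttp)=U_{\max}(\ttq)=x$ then the search halts and necessarily returns $y=x$. Now encode pairs $(\ttp,\ttq)$ as a single binary word by a self-delimiting prefix for $\ttp$ followed by $\ttq$, for instance $\ttp'\ttq$ where $\ttp'$ is the standard doubling of $\ttp$ terminated by $01$, of length $2|\ttp|+2$. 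Composition with $h$ and decoding yields a function $g\in PR^{\words\to\bbbn}$ with
\[
K_g(x)\ \leq\ 2\,\kmin[](x)+\kmax[](x)+O(1)
\]
for every $x$, witnessed by taking $\ttp,\ttq$ optimal for $x$ in the $\min$ and $\max$ senses.

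The classical invariance theorem for $K$ gives $K\leqct K_g$, hence $K(x)\leq 2\kmin[](x)+\kmax[](x)+c$ for a constant $c$, which is the desired $K\leqct 2\,\kmin[]+\kmax[]$. The only subtle point is the squeeze argument in the definition of $h$, i.e.\ verifying that the dovetailed search actually terminates and returns exactly $x$ whenever $\ttp,\ttq$ are valid $\min$/$\max$ codes for $x$; all other ingredients are routine self-delimiting encoding and invariance.
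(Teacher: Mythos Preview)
Your proof is correct: the squeeze argument is exactly the right idea, and the details (dovetailed search, self-delimiting prefix for $\ttp$, invariance) are all in order. Note that the paper itself does not prove Proposition~\ref{p:KKK} but simply cites it from \cite{ferbusgrigoKmaxKmin} (Thm.~7.15); your argument is the natural one and is in essence the proof given there.
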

\noindent
Using Prop.\ref{p:KKK}, Lemmas \ref{l:ggKmax}, \ref{l:ggKmin}
yield the following corollary.
\begin{lemma}\label{l:ggKminKmax}$\\ $
{\bf 1.}
Let's denote $\widetilde{\Pi^0_1}$ the family of infinite $\Pi^0_1$
subsets of $\bbbn$ with recursively bounded growth.\\
Suppose $\phi:\bbbn\to\bbbn$ is a total function in $\minpr[]$
which is monotone and tends to $+\infty$.
Then
\begin{enumerate}
\item[i.]
$\{x:\kmax[](x)<\phi(\kmin[](x))\}$ is constructively
$\ (\Sigma^0_1\cup\Pi^0_1,
\exists^{\leq\phi}(\Sigma^0_1\wedge\Pi^0_1))$-dense.
\item[ii.]
$\{x:\kmin[](x)<\phi(\kmax[](x))\}$ is constructively
$\ (\Sigma^0_1\cup\widetilde{\Pi^0_1},
\exists^{\leq\phi}(\Sigma^0_1\wedge\Pi^0_1))$-dense
\end{enumerate}
Moreover, this result is uniform in $\phi$ and, for ii, in a
recursive bound for the $\Pi^0_1$ set, in the sense detailed in
Lemmas \ref{l:ggKmax},\ref{l:ggKmin}.
\medskip\\
{\bf 2.}
Consider Kolmogorov relativizations $K^A,\kmin[A],\kmax[A]$ and
enumerations $(\phi^A_i)_{i\in\bbbn}$ and $(\psi^A_m)_{m\in\bbbn}$ and
$(W^A_i)_{i\in\bbbn}$ of $\minpr[A]$, $\PR[A]$
and $A$-r.e. sets  as in Point 2 of Lemmas \ref{l:ggKmax},
\ref{l:ggKmin}.
\\
Then Point 1 relativizes uniformly in the sense
detailed in Lemmas \ref{l:ggKmax},\ref{l:ggKmin}.
\end{lemma}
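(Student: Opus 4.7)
My plan is to reduce both assertions to Lemmas \ref{l:ggKmax} and \ref{l:ggKmin} by replacing the outer $\phi$ with a slowed-down variant $\phi^\star$ and using Prop.\ref{p:KKK} to trade the inner $K$ for $\kmin[]$ (in case i) or $\kmax[]$ (in case ii). Concretely, I would fix $c$ such that $K(x) \leq 2\kmin[](x)+\kmax[](x)+c$ for all $x$ (Prop.\ref{p:KKK}) and set
\[
\phi^\star(y) \;=\; \min\!\bigl(\phi(\lfloor y/4\rfloor),\ \max(0,\lfloor y/4\rfloor-c)\bigr).
\]
By Prop.\ref{p:composition}, $\phi^\star\in\minpr[]$; it is monotone non-decreasing and tends to $+\infty$ since $\phi$ does. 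Since $\phi^\star\leq\phi$, the class $\exists^{\leq\phi^\star}(\Sigma^0_1\wedge\Pi^0_1)$ is contained in $\exists^{\leq\phi}(\Sigma^0_1\wedge\Pi^0_1)$, and a code for $\phi^\star$ is produced effectively from one for $\phi$.

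The crux is the following pair of set inclusions, both coming from Prop.\ref{p:KKK}. If $\kmax[](x)<\phi^\star(K(x))$, then in particular $\kmax[](x)\leq\lfloor K(x)/4\rfloor-c$, whence $K(x)\leq 2\kmin[](x)+K(x)/4-c+c$, i.e.\ $3K(x)/4\leq 2\kmin[](x)$, so $\lfloor K(x)/4\rfloor\leq\kmin[](x)$, and monotonicity of $\phi$ yields $\phi^\star(K(x))\leq\phi(\kmin[](x))$. Symmetrically, if $\kmin[](x)<\phi^\star(K(x))$ then $2\kmin[](x)<K(x)/2-2c$ and Prop.\ref{p:KKK} forces $K(x)<2\kmax[](x)$, so $\lfloor K(x)/4\rfloor\leq\kmax[](x)$ and $\phi^\star(K(x))\leq\phi(\kmax[](x))$. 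These computations are uniform in $\phi$ because $c$ is fixed.

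Applying Lemma \ref{l:ggKmax} to $\phi^\star$ then furnishes, uniformly from a $\Sigma^0_1$- or $\Pi^0_1$-code of an infinite set $X$, an $\exists^{\leq\phi^\star}(\Sigma^0_1\wedge\Pi^0_1)$-code of an infinite subset of $X\cap\{x:\kmax[](x)<\phi^\star(K(x))\}$; the first inclusion above upgrades this to a subset of $X\cap\{x:\kmax[](x)<\phi(\kmin[](x))\}$ and $\exists^{\leq\phi^\star}\subseteq\exists^{\leq\phi}$ places this subset in the required class, proving (i). An identical manipulation with Lemma \ref{l:ggKmin} --- using $\widetilde{\Pi^0_1}$ instead of $\Pi^0_1$ and propagating the recursive growth bound $\psi_m$ through the construction of $\theta_1$ --- yields (ii). For Point 2, the essential check is that Prop.\ref{p:KKK} is available in its functional form, so the constant $c$ is independent of the oracle $A$; this is precisely the benefit of functional Kolmogorov complexity emphasized in \S\ref{ss:functionalK}, after which the entire argument relativizes verbatim via the uniform statements of Lemmas \ref{l:ggKmax} and \ref{l:ggKmin}. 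The main potential obstacle --- which I expect to be essentially notational --- is verifying that the total recursive functions $\theta_0,\theta_1$ assemble correctly from the $\phi\mapsto\phi^\star$ translation composed with the recursive functions supplied by those two lemmas.
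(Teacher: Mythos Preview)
Your proposal is correct and follows essentially the same route as the paper: define a slowed-down monotone $\minpr[]$ function from $\phi$ (the paper uses $\theta(y)=\min\{y/4,\phi(\max(0,\lfloor(y-c)/2\rfloor))\}$ where you use $\phi^\star(y)=\min(\phi(\lfloor y/4\rfloor),\max(0,\lfloor y/4\rfloor-c))$), invoke Prop.~\ref{p:KKK} to force $K(x)$ to be bounded by a multiple of $\kmin[](x)$ or $\kmax[](x)$, and then feed the inclusion into Lemmas \ref{l:ggKmax}, \ref{l:ggKmin}. Your extra remark that $\phi^\star\leq\phi$ ensures $\exists^{\leq\phi^\star}(\Sigma^0_1\wedge\Pi^0_1)\subseteq\exists^{\leq\phi}(\Sigma^0_1\wedge\Pi^0_1)$ makes explicit a point the paper leaves implicit.
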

\begin{proof}
Let $\phi\in\minpr[]$ be total, monotone increasing and unbounded.
Set
$$\theta(x)
=\min\{\frac{x}{4},\phi(\max(0,\lfloor\frac{x-c}{2}\rfloor))\}$$
Then $\theta$ is also a total, monotone increasing and unbounded
function in $\minpr[]$. Also, one can recursively go from a code
for $\phi$ to one for $\theta$.
Using Lemmas \ref{l:ggKmax}, \ref{l:ggKmin}, it suffices to prove
that, for all $x$,
\begin{eqnarray*}
\kmin[](x)<\theta(K(x))&\Rightarrow&\kmin[](x)<\phi(\kmax[](x))\\
\kmax[](x)<\theta(K(x))&\Rightarrow&\kmax[](x)<\phi(\kmin[](x))
\end{eqnarray*}
We prove the first implication, the second one being similar.
\\
Applying Prop.\ref{p:KKK}, let $c$ be such that, for all $x$,
$$K(x)<2\,\kmin[](x)+\kmax[](x)+c$$
Suppose $\kmin[](x)<\theta(K(x))$.
Then $\kmin[](x)<\frac{1}{4}\,K(x)$, so that
$$K(x)\ <\ 2\,\kmin[](x)+\kmax[](x)+c\
\leq\ \frac{K(x)}{2}+\kmax[](x)+c$$
and $K(x)<2\,(\kmax[](x)+c)$.\\
Therefore,
$\kmin[](x)<\theta(K(x))\leq\theta(2\,(\kmax[](x)+c))
\leq\phi(\kmax[](x))$.
\end{proof}
%
\subsection{Syntactical complexity}
\label{ss:complexity}
%
Whereas $\{x:K(x)<\phi(x)\}$ is r.e. whenever $\phi$ is partial
recursive (cf. Lemma \ref{l:ggK}), the complexity of the sets
considered in Lemmas  \ref{l:ggKmax}, \ref{l:ggKmin},
\ref{l:ggKminKmax} to compare $K, \kmax[], \kmin[]$ is
much higher and does involve bounded quantifications over
boolean combinations of $\Sigma^0_1$ sets as is the case in the
density results obtained in these lemmas.
\begin{proposition}\label{p:complexity}
Let $\phi$ be a total function in $\minpr[]$.
The sets
\medskip\\\medskip\centerline{
$\begin{array}{ccc}
\{x:\kmax[](x)<\phi(K(x))\}
&&\{x:\kmax[](x)<\phi(\kmin[](x))\}\\
\{x:\kmin[](x)<\phi(K(x))\}
&&\{x:\kmin[](x)<\phi(\kmax[](x))\}
\end{array}$}
are all definable by formulas of the form
$$\exists^{\leq\log}\ \forall^{\leq\log}\ (A\wedge B\wedge C)$$
where $A,B,C$ are $\Sigma^0_1\vee\Pi^0_1$.
In particular, theses sets are $\Delta^0_2$
(cf. Prop.\ref{p:boundedDelta02}).
\end{proposition}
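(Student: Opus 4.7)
The plan is to unpack each of $K,\kmin[],\kmax[]$ and $\phi$ via its defining $PR$ or $\minpr[]$/$\maxpr[]$ representation, then track the syntactic class of each piece using Proposition \ref{p:syntax}. I treat the representative case $\{x:\kmin[](x)<\phi(\kmax[](x))\}$; the three other sets follow the same template, noting that for sets involving $K=K_V$ with $V$ partial recursive the clause $V(\ttq)=x$ is simply $\Sigma^0_1$, which only simplifies the bookkeeping. Fix $\kmin[]=K_{U_\min}$ with $U_\min\in\minpr[\WN]$, $\kmax[]=K_{\max g}$ with $g:\words\times\bbbn\to\bbbn$ total recursive (by the remark following Thm.\ref{thm:invarianceMinMax}), and $\phi(u)=\min_t\varphi(u,t)$ with $\varphi$ partial recursive.

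First I rewrite the inequality by introducing witnesses for the two complexities together with their minimal programs:
\[
\exists u,v,\ttp,\ttr\,\bigl[\,|\ttp|=u \wedge (\max g)(\ttp)=x \wedge |\ttr|=v \wedge U_\min(\ttr)=x \wedge v<\phi(u) \wedge \forall\ttp'(|\ttp'|<u\Rightarrow (\max g)(\ttp')\neq x) \wedge \forall\ttr'(|\ttr'|<v\Rightarrow U_\min(\ttr')\neq x)\,\bigr].
\]
Since $\kmin[],\kmax[]\leqct\log$, one has $u,v\leq\log(x)+c$ and the programs $\ttp,\ttr,\ttp',\ttr'$ are binary words of length $\leq\log(x)+c$, so each displayed quantifier fits within $\exists^{\leq\log}$ or $\forall^{\leq\log}$ (the additive constants are absorbed either into the choice of $\mu$ in Def.\ref{def:existsleq} or by finitely many exceptional small values of $x$).

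For the body, Proposition \ref{p:syntax} gives that $U_\min(\ttr)=x$ and $(\max g)(\ttp)=x$ are each $\Sigma^0_1\wedge\Pi^0_1$, so their negations are $\Sigma^0_1\vee\Pi^0_1$. The clause $v<\phi(u)$ unfolds to $\forall t\,(\varphi(u,t)\text{ converges}\Rightarrow v<\varphi(u,t))$, hence $\Pi^0_1$. Regroup: let $A$ be the conjunction of the $\Sigma^0_1$ halves of the two equalities together with the decidable conditions $|\ttp|=u$ and $|\ttr|=v$, so $A\in\Sigma^0_1\subseteq\Sigma^0_1\vee\Pi^0_1$; let $B$ be the conjunction of their $\Pi^0_1$ halves together with $v<\phi(u)$, so $B\in\Pi^0_1\subseteq\Sigma^0_1\vee\Pi^0_1$.

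Finally, to fit the two bounded universals $\forall\ttp',\forall\ttr'$ (with bodies $C_1,C_2\in\Sigma^0_1\vee\Pi^0_1$) inside a single $\forall^{\leq\log}$ without inflating the body's complexity, merge them by a tagging encoding: set $C(\ttw):=(\ttw=0\ttp'\wedge C_1(\ttp'))\vee(\ttw=1\ttr'\wedge C_2(\ttr'))$, which is a disjunction of two $\Sigma^0_1\vee\Pi^0_1$ formulas, hence itself $\Sigma^0_1\vee\Pi^0_1$. Then $\forall^{\leq\log}\ttw\,C(\ttw)$ is equivalent to the conjunction of the two original universals, yielding the desired form $\exists^{\leq\log}\forall^{\leq\log}(A\wedge B\wedge C)$. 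The $\Delta^0_2$ conclusion then follows from $\Sigma^0_1\vee\Pi^0_1\subseteq\Delta^0_2$ together with Proposition \ref{p:boundedDelta02} applied to $\mu=\log$ (total recursive, hence with $\Sigma^0_2$ graph). The main obstacle is precisely this merging step: without tagging, combining two independent bounded $\forall$'s over $\Sigma^0_1\vee\Pi^0_1$ bodies yields a conjunct of shape $(\Sigma\vee\Pi)\wedge(\Sigma\vee\Pi)$, which expands to include a $\Sigma^0_1\wedge\Pi^0_1$ disjunct and escapes $\Sigma^0_1\vee\Pi^0_1$.
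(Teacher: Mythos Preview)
Your overall strategy is sound and your tagging idea is a legitimate way around the obstacle you identify, but the implementation has a slip: with $C(\ttw):=(\ttw=0\ttp'\wedge C_1(\ttp'))\vee(\ttw=1\ttr'\wedge C_2(\ttr'))$, the empty word $\lambda$ satisfies neither disjunct, so $\forall^{\leq\log}\ttw\,C(\ttw)$ is identically false. Adding $(\ttw=\lambda)$ as a third disjunct repairs this, and the class $\Sigma^0_1\vee\Pi^0_1$ is indeed closed under finite disjunction (both $\Sigma^0_1$ and $\Pi^0_1$ being closed under finite union), so your argument then goes through.

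The paper avoids the obstacle differently. It only writes out $\{x:\kmax[](x)<\phi(K(x))\}$, where the minimality of $K$ contributes the purely $\Pi^0_1$ clause $U(\ttq_1)\neq x$ and only the $\kmax[]$-minimality contributes a genuine $\Sigma^0_1\vee\Pi^0_1$ conjunct; three pieces then suffice with no merging. For the case you chose, $\kmin[](x)<\phi(\kmax[](x))$, following that template literally does produce two $\Sigma^0_1\vee\Pi^0_1$ minimality clauses, which is exactly the difficulty you spotted. The simpler escape is to observe that the exact value of the \emph{outer} complexity is never needed: drop $v$ and the clause $\forall\ttr'(|\ttr'|<v\Rightarrow U_\min(\ttr')\neq x)$, keeping only $U_\min(\ttr)=x$ together with $|\ttr|<\phi(u)$. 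Since any $\ttr$ with $U_\min(\ttr)=x$ satisfies $\kmin[](x)\leq|\ttr|$, the inequality $|\ttr|<\phi(u)=\phi(\kmax[](x))$ already forces $\kmin[](x)<\phi(\kmax[](x))$. This leaves a single $\Sigma^0_1\vee\Pi^0_1$ conjunct under the lone bounded universal $\forall\ttp'$, and no tagging is required.
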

\begin{proof}
Without loss of generality, we can suppose that $\kmax[](x)$ and
$\kmin[](x)$ are both $\leq \log(x)$ for all $x$.
Let $U:\bbbn\to\bbbn$ and $V,W,\varphi:\bbbn^2\to\bbbn$ be partial
recursive functions such that $K=K_U$ and $\kmin[]=K_\alpha$ and
$\kmax[]=K_\beta$ and $\phi(x)=\min_t\varphi(x,t)$ where
$\alpha(x)=\min_tV(x,t)$ and $\beta(x)=\min_tW(x,t)$.
\\
Following a usual convention, we shall write
$\exists\ttp^{|\ttp|\leq x}...$ and $\forall\ttp^{|\ttp|\leq x}...$
in place of $\exists\ttp\ (|\ttp|\leq x\ \wedge...)$ and
$\forall\ttp\ (|\ttp|\leq x\ \Rightarrow...)$.
\\
Then $\kmax[](x)<\phi(K(x))$ if and only if
\medskip\\
\indent$\exists{\ttp_1}^{|\ttp_1|\leq\log(x)}\
\exists{\ttp_2}^{|\ttp_2|\leq\log(x)}\
\forall{\ttq_1}^{|\ttq_1|<|\ttp_1|}\
\forall{\ttq_2}^{|\ttq_2|<|\ttp_2|}$
\medskip\\
\indent\indent$[U(\ttp_1)=x\ \wedge\ U(\ttq_1)\neq x$
\medskip\\
\indent\indent$\wedge\ \exists t\ V(\ttp_2,t)=x\
\wedge\ \forall t\ (V(\ttp_2,t)\mbox{ is undefined or }\leq x)$
\medskip\\
\indent\indent
$\wedge\ (\forall t\ (V(\ttq_2,t)\mbox{ is undefined or }\neq x)
\vee\ \exists t\ V(\ttq_2,t)> x)$
\medskip\\
\indent\indent
$\wedge\ \forall t\ (\varphi(|\ttp_1|,t)\mbox{ is undefined or }
|\ttp_2|<\varphi(|\ttp_1|,t))]$
\medskip\\
Which is a formula of the form stated in the Proposition.
All three other cases are similar.
\end{proof}
Bounded quantifications over boolean combinations of $\Sigma^0_1$
sets are also involved for the set of integers with
$K,\kmax[],\kmin[]$ incompressible binary representations.
\begin{proposition}
The set
$$I=\{x:\min(K(x),\kmax[](x),\kmin[](x))\geq\lfloor\log(x)\rfloor-1\}$$
is infinite and is definable by a formula of the form
$$\forall^{\leq\log}\ (A\wedge B)$$
where $A,B$ are $\Sigma^0_1\vee\Pi^0_1$.
In particular, this set is $\Delta^0_2$.
\end{proposition}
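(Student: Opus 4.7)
The plan is to establish separately that $I$ is infinite and that it has the stated syntactic form, modelling the argument on the preceding Prop.\ref{p:complexity}.

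For infinity I would use a counting argument. For each $n$, the set $\{x\in\bbbn : K(x)\leq n-2\}$ has cardinality at most $2^{n-1}-1$ (the number of binary programs of length $\leq n-2$), and the identical bound holds for $\kmin[]$ and $\kmax[]$ by the same program-counting. Hence in the dyadic block $[2^n,2^{n+1})$, on which $\lfloor\log x\rfloor=n$ throughout, the set of \emph{bad} $x$ (those violating at least one of the three lower bounds $K(x),\kmin[](x),\kmax[](x)\geq n-1$) has cardinality at most $3(2^{n-1}-1)$. This leaves a non-trivial surplus of good $x$ once one loosens the constant slightly (any $c\geq 2$ in place of $1$ gives a positive per-block density of good $x$, hence infinity), and to recover the sharp value $-1$ I would exploit the normalization of Remark \ref{rk:minmax} -- where $\calkmin[]$ and $\calkmax[]$ are chosen with $\calkmin[](x\,||\,A),\calkmax[](x\,||\,A)\leq {\cal K}(x\,||\,A)\leq\log$ and no additive constant -- so that the three incompressibility bounds merge without further loss.

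For the syntactic form I would rewrite membership in $I$ as the conjunction of three bounded universal statements. Fixing optimal partial recursive $U$ with $K=K_U$, and optimal $\min$, $\max$ machines $U_{\min}=\min W_{\min}$ and $U_{\max}=\max W_{\max}$ (with $W_{\min}, W_{\max}$ partial recursive), membership becomes $\forall p^{|p|<\lfloor\log x\rfloor-1}\,U(p)\neq x$ together with the analogous statements for $U_{\min}, U_{\max}$. The three universal quantifiers share a common bound, so I would amalgamate them into a single multivariable $\forall^{\leq\log}$ over triples $(p,q,r)$, as in the proof of Prop.\ref{p:complexity}. The body is then a conjunction of three atoms: $U(p)\neq x$ is $\Pi^0_1$ (negation of a $\Sigma^0_1$ halting equation), while $U_{\min}(q)\neq x$ and $U_{\max}(r)\neq x$ lie in $\Sigma^0_1\vee\Pi^0_1$, being negations of the $\Sigma^0_1\wedge\Pi^0_1$ graphs characterizing $\min_t W_{\min}(q,t)=x$ and $\max_t W_{\max}(r,t)=x$ (cf. the computation in Prop.\ref{p:syntax}). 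Regrouping these three $\Sigma^0_1\vee\Pi^0_1$ atoms into two conjuncts $A,B$ produces the claimed form $\forall^{\leq\log}(A\wedge B)$, and membership in $\Delta^0_2$ follows from Prop.\ref{p:boundedDelta02}.

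The main obstacle is the infinity proof at the sharp constant $-1$: the crude program-count leaves a deficit at this value, so the argument requires the coordinated choice of universal functions from Remark \ref{rk:minmax} and a cumulative count across the blocks $[1,2^N)$ rather than a per-block estimate. Once infinity is secured, the syntactic bookkeeping is routine and directly parallels Prop.\ref{p:complexity}; if anything the present case is simpler since only one complexity is being bounded below per quantifier.
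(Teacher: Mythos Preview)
Your infinity argument is on the right track and, in its final form (normalization plus cumulative count), matches the paper's. The point to make explicit is that the normalization of Remark~\ref{rk:minmax}, giving $\kmin[],\kmax[]\leq K$ pointwise, does more than ``merge the bounds without loss'': it makes $K$ redundant in the minimum, so that $\min(K,\kmin[],\kmax[])=\min(\kmin[],\kmax[])$ and only \emph{two} complexities remain. The cumulative count then reads: at most $2(2^n-1)<2^{n+1}$ integers $x$ have $\kmin[](x)<n$ or $\kmax[](x)<n$, so some $x\leq 2^{n+1}-1$ has both $\geq n$, hence $\min\geq n>\lfloor\log x\rfloor-1$. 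With three complexities the bound would be $3(2^n-1)$, which exceeds $2^{n+1}$, so neither the per-block nor the cumulative count rescues the constant~$-1$ without this reduction.

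There is a genuine gap in your syntactic argument. You propose to write the matrix as three conjuncts $C_1\wedge C_2\wedge C_3$ (for $K,\kmin[],\kmax[]$) and then ``regroup into two conjuncts $A,B$'' each in $\Sigma^0_1\vee\Pi^0_1$. But the class $\Sigma^0_1\vee\Pi^0_1$ is not closed under conjunction (its complement, the d-r.e.\ class $\Sigma^0_1\wedge\Pi^0_1$, is not closed under union), so three such atoms cannot in general be compressed into two. In particular, even though $U(\ttp)\neq x$ is $\Pi^0_1$, conjoining it with a $\Sigma^0_1\vee\Pi^0_1$ atom yields a set of the form $(\Sigma^0_1\wedge\Pi^0_1)\vee\Pi^0_1$, which is one level higher in the Boolean hierarchy over $\Sigma^0_1$. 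The paper sidesteps this by applying the \emph{same} normalization already used for the infinity part: once $K$ drops out of the minimum, membership in $I$ is expressed directly as $\forall\ttp^{|\ttp|<\lfloor\log x\rfloor-1}\,(A\wedge B)$ with exactly two atoms, one for $\kmin[]$ and one for $\kmax[]$, each the negation of a $\Sigma^0_1\wedge\Pi^0_1$ graph and hence $\Sigma^0_1\vee\Pi^0_1$. No regrouping is needed.
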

\begin{proof}
Without loss of generality we shall suppose that $K\leq\kmax[]$
and $K\leq\kmin[]$.
The usual argument to get incompressible integers works:
there are $\sum_{i<n}2^i=2^n-1$ programs $\ttp$ with length $<n$,
hence at most $2\,(2^n-1)$ integers $x$ such that $\kmax[](x)<n$
or $\kmin[](x)<n$.
Thus, for every $n$, there exists an integer $x\leq 2^{n+1}-1$ such
that $\kmax[](x),\kmin[](x)\geq n$.
Observe that such an $x$ is necessarily in $I$ since
$\log(x)\leq\log(2^{n+1}-1)<n+1$.
Which shows that $I$ is infinite.
\medskip\\
We let $V,W$ be as in the proof of Prop.\ref{p:complexity}.
Then $x\in I$ can be written
\medskip\\
$\forall\ttp^{|\ttp|<\lfloor\log(x)\rfloor-1}\
[(\forall t\ (V(\ttp,t)\mbox{ is undefined or }\neq x)
\vee\ \exists t\ V(\ttp,t)> x)$
\medskip

\hfill{$\wedge\ (\forall t\ (W(\ttp,t)\mbox{ is undefined or }\neq x)
\vee\ \exists t\ W(\ttp,t)< x)]$}
\medskip\\
Which is a formula of the form stated in the Proposition.
All three other cases are similar.
\end{proof}
\begin{remark}
In case $\phi$ is small enough (say $\phi(z)\leq z-1$),
the set $I$ is obviously disjoint from all fours sets considered in
Prop.\ref{p:complexity}.
\end{remark}
%
\subsection{The hierarchy theorem}
\label{ss:hierarchy}
%
We can now prove the central application of the
$\often[]{\uparrow}$ relation and the $\lless[]\,$ and
$\lless[]\uparrow$ orderings. Namely, a strong hierarchy theorem
for $K,\kmax[],\kmin[]$ and their oracular versions using
the successive jumps oracles.
\\
Whereas Thm.\ref{thm:hierarchyInfct} involves the sole $\infct$
ordering, the refinment obtained in Thm.\ref{thm:hierarchy}
below involves a chain of more and more complex orderings which all
refine $\infct$ and are relevant of
Thm.\ref{thm:order} and Cor.\ref{coro:order}.
\begin{theorem}[The hierarchy theorem]\label{thm:hierarchy}
Let $B_n$ be the subclass of $\Delta^0_n$ subsets of $\bbbn$
consisting of sets definable by formulas of the form
$\exists^{\leq\mu}(\Sigma^0_n\wedge\Pi^0_n)$ where
$\mu:\bbbn\to\bbbn$ is a total function which is recursive in
$\emptyset^{(n-1)}$.
Let $\widetilde{\Pi^0_n}$ be the set of $\Pi^0_n$ sets with
$\emptyset^{(n)}$-recursively bounded growth
(cf. Def.\ref{def:recbounded}).\\
Then
\medskip\\
{\bf 1.} $\log
\ggreater[\Sigma^0_1,\Sigma^0_1]{\PR[]}
K
\ggreater[\Sigma^0_1\cup\Pi^0_1,B_1]{\minpr[]\uparrow}
\kmax[]
\ggreater[\Sigma^0_2,\Sigma^0_2]{\PR[\emptyset']}
K^{\emptyset'}...$\medskip

\hfill{$...\ggreater[\Sigma^0_n,\Sigma^0_n]{\PR[\emptyset^{(n-1)}]}
K^{\emptyset^{(n-1)}}
\ggreater[\Sigma^0_n\cup\Pi^0_n,B_n]
{Min_{PR^{\emptyset^{(n-1)}}}\uparrow}
\kmax[\emptyset^{(n-1)}]
\ggreater[\Sigma^0_{n+1},\Sigma^0_{n+1}]{\PR[\emptyset^{(n)}]}
K^{\emptyset^{(n)}}...$}
\medskip\medskip\\
{\bf 2.} $\log
\ggreater[\Sigma^0_1,\Sigma^0_1]{\PR[]}
K
\ggreater[\Sigma^0_1\cup\widetilde{\Pi^0_1},B_1]{\minpr[]\uparrow}
\kmin[]
\ggreater[\Sigma^0_2,\Sigma^0_2]{\PR[\emptyset']}
K^{\emptyset'}...$\medskip

\hfill{$...\ggreater[\Sigma^0_n,\Sigma^0_n]{\PR[\emptyset^{(n-1)}]}
K^{\emptyset^{(n-1)}}
\ggreater[\Sigma^0_n\cup\widetilde{\Pi^0_n},B_n]
{Min_{PR^{\emptyset^{(n-1)}}}\uparrow}
\kmin[\emptyset^{(n-1)}]
\ggreater[\Sigma^0_{n+1},\Sigma^0_{n+1}]{\PR[\emptyset^{(n)}]}
K^{\emptyset^{(n)}}...$}
\medskip\medskip\\
{\bf 3.}
There is a constant $c$ such that all $\supct$ inequalities in
1 and 2 (which are inherent to the $\ggreater[]\,$ and
$\ggreater[]\uparrow$ orderings) are $>$ inequalities up to $c$.
\medskip\\
{\bf 4.} Though $\kmax[]$ and $\kmin[]$ are $\leqct$ incomparable,
we have
\begin{eqnarray*}
\kmax[\emptyset^{(n-1)}]
&\often[\Sigma^0_n\cup\Pi^0_n,B_n]
{Min_{PR^{\emptyset^{(n-1)}}}\uparrow}&
\kmin[\emptyset^{(n-1)}]
\\
\kmin[\emptyset^{(n-1)}]
&\often[\Sigma^0_n\cup\Pi^{0,\leq rec^{\emptyset^{(n-1)}}}_n,B_n]
{Min_{PR^{\emptyset^{(n-1)}}}\uparrow}
&\kmax[\emptyset^{(n-1)}]
\end{eqnarray*}
\end{theorem}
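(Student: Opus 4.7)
The plan is to decompose the chains in Parts 1 and 2 into consecutive links $f\lless[{\cal C},{\cal D}]{\cal F} g$ (or the $\uparrow$ variant), each of which I will establish independently: the $\leqct$ component is classical and already recorded in Thm \ref{thm:hierarchyInfct}, while the $\often[{\cal C},{\cal D}]{\cal F}$ component comes from a uniform relativized version of one of the density lemmas of \S\ref{s:barzdins}. Three link patterns arise: (A) the Barzdin-type link $K^{\emptyset^{(n-1)}}\lless[\Sigma^0_n,\Sigma^0_n]{PR^{\emptyset^{(n-1)}}} \log$; (B) the ``complexity vs complexity'' links $\kmax[\emptyset^{(n-1)}]\lless[\Sigma^0_n\cup\Pi^0_n,B_n]{Min_{PR^{\emptyset^{(n-1)}}}\uparrow} K^{\emptyset^{(n-1)}}$ and the $\kmin$ analog; and (C) the bootstrap link $K^{\emptyset^{(n)}}\lless[\Sigma^0_{n+1},\Sigma^0_{n+1}]{PR^{\emptyset^{(n)}}} \kmax[\emptyset^{(n-1)}]$ and its $\kmin$ analog.

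Pattern (A) is a direct application of Lemma \ref{l:ggK} point 2, relativized to $\emptyset^{(n-1)}$, with $\varphi(x)=\phi(\log x)$ on the right: this $\varphi$ is $\emptyset^{(n-1)}$-recursive and tends to $+\infty$ whenever $\phi$ does. Pattern (B) is a direct application of Lemma \ref{l:ggKmax} point 2 (resp.\ Lemma \ref{l:ggKmin} point 2) to oracle $\emptyset^{(n-1)}$; the syntactic class $\exists^{\leq\phi^A}(\Sigma^{0,A}_1\wedge\Pi^{0,A}_1)$ produced by the lemma for $\phi\in\minpr[A]$ with $A=\emptyset^{(n-1)}$ matches the theorem's $B_n$, and the extra ``recursively bounded growth'' constraint on $\Pi^{0,A}_1$ in Lemma \ref{l:ggKmin} corresponds exactly to $\widetilde{\Pi^0_n}$ in Part 2.

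The hard part is pattern (C). The plan is to invoke Lemma \ref{l:ggK} point 2 with oracle $\emptyset^{(n)}$ and right-hand function $\psi(x)=\phi(\kmax[\emptyset^{(n-1)}](x))$, which is total $\emptyset^{(n)}$-recursive by the relativized Thm \ref{thm:KminRecIn0'}. The obstacle is that $\kmax[\emptyset^{(n-1)}]$ does not tend to $+\infty$, so neither does $\psi$; the remedy is to use the stronger form of the lemma (point 1ii/2ii), which requires $\psi$ only to be unbounded on the relevant infinite $\Sigma^0_{n+1}$ set. This is secured by a counting argument: $\{x:\kmax[A](x)\leq M\}$ is contained in the image of $U_{\max}^A$ on strings of length $\leq M$, a set of cardinality at most $2^{M+1}$, so $\kmax[A]$ is unbounded on every infinite set; since $\phi$ tends to $+\infty$ and hence $\phi^{-1}(\{m:m\leq M\})$ is finite, the set $\{x:\psi(x)\leq M\}$ is a finite union of preimages $(\kmax[A])^{-1}(\{n\})$ and therefore finite. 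Uniformity in $\phi$ comes from the framework of Prop \ref{p:uniform}, because the transformation $\phi\mapsto\phi\circ\kmax[\emptyset^{(n-1)}]$ is recursive on codes.

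Part 3 is essentially a corollary of Parts 1 and 2 combined with the functional setup of \S\ref{s:uniform}. Each $\lless[{\cal C},{\cal D}]{{\cal F}\uparrow}$ relation, instantiated at $\phi(z)=\max(0,z-c)$, already yields constructive $({\cal C},{\cal D})$-density of $\{x:g(x)>f(x)+c\}$ for any fixed $c$ on each individual link; the only new content is a single $c$ working uniformly across the infinite chain. This uniformity is furnished by the functional Kolmogorov complexities: Remark \ref{rk:minmax} lets us choose optimal functionals so that $\calkmax[](\cdot\,||\,A),\calkmin[](\cdot\,||\,A)\leq{\cal K}(\cdot\,||\,A)\leq\log$ holds with constant $0$ independent of $A$, while a standard simulation gives ${\cal K}(\cdot\,||\,\emptyset^{(n)})\leqct\calkmax[](\cdot\,||\,\emptyset^{(n-1)})$ with constant uniform in $n$; Thm \ref{thm:unifVersusOracle} then pulls these uniform bounds back to the oracular $K^{\emptyset^{(n)}},\kmax[\emptyset^{(n)}],\kmin[\emptyset^{(n)}]$. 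Finally, Part 4 is obtained by applying Lemma \ref{l:ggKminKmax} point 2 relativized to $\emptyset^{(n-1)}$, where the ``recursively bounded growth'' hypothesis in one direction accounts for the $\Pi^{0,\leq rec^{\emptyset^{(n-1)}}}_n$ constraint in the second inequality.
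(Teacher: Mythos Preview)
Your proposal is correct and follows essentially the same decomposition as the paper: the $\leqct$ components from the inclusions of function classes (Thm.~\ref{thm:hierarchyInfct}), patterns (A) and (C) from Lemma~\ref{l:ggK} with a composed right-hand side, pattern (B) from Lemmas~\ref{l:ggKmax}/\ref{l:ggKmin}, Part~3 from the functional framework, and Part~4 from Lemma~\ref{l:ggKminKmax}.

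One small wrinkle: your claimed obstacle in pattern (C) --- that $\kmax[\emptyset^{(n-1)}]$ does not tend to $+\infty$ --- is actually false, and your own counting argument proves it. Since $\{x:\kmax[A](x)\leq M\}$ is finite for every $M$, the function $\kmax[A]$ tends to $+\infty$; composing with a $\phi$ that tends to $+\infty$ then gives a $\psi$ that also tends to $+\infty$ (exactly what you show when you argue $\{x:\psi(x)\leq M\}$ is finite). So the detour through the ``unbounded on $W_j$'' clause of Lemma~\ref{l:ggK} is unnecessary, though harmless; the paper simply invokes the lemma with $\varphi\circ\kmax[\emptyset^{(n-1)}]$ directly.
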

\begin{proof}
{\em 1. $\leqct$ inequalities.}
Inequality $\log\geqct K$ is well-known.
The inclusions (cf. Prop.\ref{p:MinPRsubsetPR0'})
\\\centerline{$\PR[\emptyset^{(n)}]\subseteq\minpr[\emptyset^{(n)}]
\subseteq\PR[\emptyset^{(n+1)}]\ \ ,\ \ 
\PR[\emptyset^{(n)}]\subseteq\maxpr[\emptyset^{(n)}]
\subseteq\PR[\emptyset^{(n+1)}]$}
yield inequalities
\\\medskip\centerline{$K^{\emptyset^{(n)}}
\geqct \kmin[\emptyset^{(n)}]\geqct K^{\emptyset^{(n+1)}}\ \ ,\ \ 
K^{\emptyset^{(n)}}\geqct \kmax[\emptyset^{(n)}]
\geqct K^{\emptyset^{(n+1)}}$.}
{\em 2. Inequalities $...\ggreater[]\,K^{\emptyset^{(i)}}$.}
Lemma \ref{l:ggK} with $A=\emptyset$ and
$\varphi\circ\log$ in place of $\varphi$ yields inequality
$\ \log\ \ggreater[\Sigma^0_1,\Sigma^0_1]{\PR[]} K$.
\\
Since $\kmax[A]$ is recursive in $A'$
(cf. Thm.\ref{thm:KminRecIn0'}),
Lemma \ref{l:ggK} with $A=\emptyset^{(n-1)}$ and
$\varphi\circ\kmax[\emptyset^{(n-1)}]\ $ in place of $\varphi$
yields inequality $\kmax[\emptyset^{(n-1)}]\
\ggreater[\Sigma^0_{n+1},\Sigma^0_{n+1}]{\PR[\emptyset^{(n)}]}
\ K^{\emptyset^{(n)}}$.
Idem with $\kmin[]$.
\medskip\\
{\em 3. Inequalities $K^{\emptyset^{(i)}}\ggreater[]\,...$.}
Direct application of Lemmas \ref{l:ggKmax}, \ref{l:ggKmin}.
\medskip\\
{\em 4. Point 3 of the theorem.}
This is the benefit of the uniform oracular property
obtained in Lemmas \ref{l:ggK}, \ref{l:ggKmax}, \ref{l:ggKmin},
\ref{l:ggKminKmax}.
\medskip\\
{\em 5.} Finally, the $\often[]\,$ relations
(Point 4 of the theorem) are direct application of Lemma
\ref{l:ggKminKmax}.
\end{proof}
\begin{remark}
The scattered character of comparisons with respect to the
$\lless[]\,$ orderings is unavoidable since all complexities
$K,\kmax[],\kmin[],...,K^{\emptyset^{(n)}}$ are equal up to
a constant on the infinite set of integers with
$K^{\emptyset^{(n)}}$ incompressible binary representations.
\end{remark}

\end{document}